\documentclass[a4paper,11pt]{article} 
 
\textwidth480pt 
\hoffset-60pt   
\voffset-60pt   
\headsep+30pt   
\textheight700pt

\usepackage{fancybox} 
\usepackage{pifont} 

\usepackage[french,english]{babel}

\usepackage{graphicx}

\usepackage[symbol]{footmisc}


\usepackage{dsfont}
\usepackage{amsmath} 
\usepackage[applemac]{inputenc}
\usepackage{amsfonts}
\usepackage{amssymb}
\usepackage{amsthm}
\usepackage{stmaryrd}
\usepackage{xcolor}
\usepackage{mathrsfs}

\usepackage[backref]{hyperref}

\newtheorem{lemma}{Lemma}

\newcommand{\mysection}{\setcounter{equation}{0} \section}

\renewcommand{\P}{\mathbb{P}}

\newcommand{\N}{\mathbb{N}}

\newcommand{\R}{\mathbb{R}}

\newtheorem{THM}{Theorem}   
\newtheorem{remark}{Remark}   

\newtheorem{cor}[THM]{Corollary} 
\newtheorem{PROP}[THM]{Proposition}

\def\1{\mbox{1\hspace{-0.25em}l}}

\def\leftB{[\![}
\def\rightB{]\!]}

\def\0{{\mathbf{0}}}

\newcommand{\newabstract}[1]{%
	\par\bigskip
	\csname otherlanguage*\endcsname{#1}%
	\csname captions#1\endcsname
	\item[\hskip\labelsep\scshape\abstractname.]
}

\title{\textbf{
Second derivatives of solutions to the 3D incompressible Navier-Stokes equation in Lebesgue spaces 
}}
 \author{\textbf{Igor Honor\'e}\footnote{Univ Lyon, CNRS, Universit\'e Claude Bernard Lyon 1, UMR5208, Institut Camille Jordan, F-69622 Villeurbanne, France. E-mail: honore@math.univ-lyon1.fr}}

\begin{document}

\maketitle



  \begin{abstract}
  	We obtain new controls for 
  	%
  	the Leray solutions $u$ of the incompressible Navier-Stokes equation in $\R^3$. Specifically, we estimate  $u$, $\nabla u$, and $\nabla^2 u$ 
  	in suitable Lebesgue spaces $L^{\tilde r}_TL^r$,
   $r <+ \infty$ with some constraints on $\tilde r>0$.
  Our method is based 
 	 on a Duhamel formula around a
 	 perturbed heat equation, allowing to thoroughly exploit the well-known energy estimates which balances the potential singularities. We also perform a new Bihari-LaSalle argument in this context. 
 	 
 	 Eventually, we adapt our strategy to prove that $\sup_{t \in [0,T]} \int_{0}^t (t-s)^{-\theta} \|\nabla^k u(s,\cdot)\|_{L^r} ds<+ \infty$, for all $\theta< \frac{3-kr}{2r}$, $k \in \leftB 0,2\rightB$, and  $1<r<\frac{3}{k}$.  
 \end{abstract}
  
 {\small{\textbf{Keywords:} Incompressible Navier-Stokes equation, Leray solution, Integral inequality.}}
 
 {\small{\textbf{MSC:} Primary: 35Q30, 76D03; Secondary: 76D05, 26D10,}}

 \mysection{Introduction}

  The goal of this article is to get \textit{a priori} controls of the incompressible Navier-Stokes equation in  dimension 3: 
 \begin{equation}
 	\label{Navier_Stokes_equation_v1}
 	\begin{cases}
 		\partial_t  u(t,x)+  u(t,x)   \cdot  \nabla   u(t,x)
 		= \nu \Delta u(t,x)-\nabla {\rm p}(t,x)+  f(t,x) 
 		,\\
 		u(0,x)= u_0(x), \\
 		\nabla \cdot  u(t,x)= 0, 
 	\end{cases}
 \end{equation}
  exploiting at the most the regularisation by the heat kernel.

 Projecting the above system on the space of divergence free functions gives
 \begin{equation}
 	\label{Navier_Stokes_equation_v2}
 	\begin{cases}
 		\partial_t  u(t,x)+ \mathbb P[ u   \cdot  \nabla   u](t,x)
 		= \nu \Delta u(t,x)+ \mathbb P  f(t,x) 
 		,\\
 		u(0,x)= u_0(x), \, x \in \R^3 ,
 	\end{cases}
 \end{equation}
 where $\mathbb{P}$ is the Leray operator which is defined by
 $\P= I-\Xi$ , $I $ stands for the identity operator and $\Xi$  is a non-local operator deriving from a gradient.
 
 Without smallness assumption on $u_0$ or $T$, establishing new controls on $u$ is a long-stand problem. 
 The main contribution over the last century is due to Leray \cite{lera:34} implying the following estimates 
  	\begin{equation}\label{ineq_NS_Leray}
 	\| u (t,\cdot)\|_{L^2}
 	\leq 
 	\|u_0\|_{L^2} + \frac{1}{2}\int_0^T \| f(s,\cdot) \|_{L^2}  ds, 
 \end{equation}
 and
 \begin{equation}\label{ineq_D_NS_Leray}
 	\int_0^t  \| \nabla   u(s,\cdot) \|_{L^2}^2 ds 
 	\leq \nu^{-1}\bigg ( \|u_0\|_{L^2}^2  
 	+
 \| u (t,\cdot)\|_{L^2}
 	\int_0^t 
 	\|f(s,\cdot)\|_{L^2} 
 	ds\bigg )
 	.
 \end{equation}
The proof of the above inequalities relies on the energy technique for the system \eqref{Navier_Stokes_equation_v1}.
There are some  variations of this, implying some  fluctuations of controls \eqref{ineq_NS_Leray}-\eqref{ineq_D_NS_Leray}. We can mention for instance \cite{lema:07} 
where well-posedness is established in  a Morrey-Campanato space, being a variation of the $L^2$ estimates of Leray. 

Proving that the solution $u$ is globally smooth ($L^\infty$ with bounded derivatives) constitutes a Millennium Prize Problem, see 
\cite{feff:06}. For additional information on equation \eqref{Navier_Stokes_equation_v2}, the reader can look at the books
\cite{tema:79},  \cite{lion:96}, \cite{chem:98}, \cite{majd:bert:02},   \cite{lemar:02}, 
\cite{lema:16},  \cite{baho:chem:danc:11}.

In dimension $2$, the Cauchy problem \eqref{Navier_Stokes_equation_v1} is globally well-posed in a smooth space, see Ladyzhenskaya \cite{lady:69}.
\\

Our principal tools are the regularisation by heat kernel and a new Bihari-LaSalle type inequality.
Comparing with the usual energy method, ours allows us to consider more spatial regularity, in term of derivatives and of Lebesgue spaces.

Our analysis can be extended to other dimension than $3$; one of the change is in the application of Gagliardo-Nirenberg interpolation inequality.
\\

For the best of the author's knowledge, the first controls of the second derivatives are due to Constantin \cite{cons:90} and also Lions \cite{lion:96}, where the authors proved that 
$\nabla^2 u \in L^{\tilde r}([0,T], L^r(\R^3,\R^3))$, with 
\begin{equation*}\label{known_result_D2}
	\frac{2}{\tilde r}+ \frac{3}{r}= 4, \forall \tilde r \in (1,2) .
\end{equation*}
The constraint on $\tilde r $ means equivalently that $r \in (1,\frac 32)$, the goal of the article is to consider arbitrary large $r$.

In the particular case $r = \tilde r$ there are some stronger results in term of scaling, indeed Constantin \cite{cons:90}
obtains 
$\nabla^2 u \in L^{\frac{4}{3}-\varepsilon}$, $\varepsilon>0$,
and similarly Lions shows the control in Lorentz space 
$L^{\frac 43, \infty}([0,T]\times\R^3,\R^3)$.
Next to Vasseur et al. \cite{vass:10} and \cite{vass:yang:21}, for each $k \in \leftB 1,2\rightB$, under the constraint
\begin{equation}\label{VAsseur}
	r<\frac{4}{k+1},
\end{equation}
they show that $
	\|\nabla^k u\|_{L^r_T L^r(\Omega)}< +\infty$,
for any $\Omega$,  bounded subset of $(t_0,T)\times \R^3$, and for any $t_0 \in (0,T)$.
\\

It is already proved, see \cite{chae:92}, that in a periodic framework or in a bounded domain that $ 
t^k \|\partial_t^k u(t,\cdot)\|_{L^2}$ lies in $L^2_T$,  
for each $k \in \N$.
Similarly in Section \ref{sec_weight_lebesgue}, we establish a similar control but with space derivatives and with a singular time integral.
\\


In \cite{foia:guill:tema:81}, \cite{doer:foia:02}, \cite{duff:90}, the solution $u$ is proved to be such that
$ \partial_t ^n \nabla^k u \in L^{\frac{2}{4n+2k-1}}_TL^2$.
This matches with the limit case for $n=0$, $k=2$ and $r \to 2^{-}$ in Theorem \ref{THEO_2}, see also Corollary \ref{Coro_D2} further. 

For other details on already known \textit{a priori} controls of $u$, we can refer to \cite{tao:13}
\\

%
%
%
%
%
%

Our first main result is stated below.
\begin{THM}\label{THEO_2}
	
	%
	Let $u \in L^2([0,T], H^1(\R^3,\R^3)) \cap L^\infty([0,T], L^2(\R^3,\R^3))$ a Leray solution of \eqref{Navier_Stokes_equation_v2}, if  for all $k \in \leftB 0, 2 \rightB$, and $(r,\tilde r) \in (1,+ \infty)$,
	$u_0 \in  L^2(\R^3,\R^3) \cap \dot B_{r,\tilde r}^{\frac{2(\tilde r-1)}{\tilde r}}(\R^3,\R^3)$
	 and $f \in L^\infty([0,T], L^2(\R^3,\R^3)) \cap L^\infty([0,T], \dot  B_{r,\infty}^{2}
 (\R^3,\R^3))$
	then $\|\nabla ^k u\|_{L^{\tilde r}_TL^r}<+ \infty $
	as soon as
	\begin{trivlist}
\item[$\star$ for $k=0$] 
		\begin{equation*}
			\begin{cases}
				\frac{4}{\tilde r}+ \frac{6}{r}= 3, \text{ if }  2 \leq r  \leq 6,
\\
				\frac{1}{\tilde r}+\frac {3}{r}=1, \text{ if } r \geq 6,
			\end{cases}
	\end{equation*}
\item[$\star $ for $k=1$] 
	\begin{equation*}
	\begin{cases}
		\frac{1}{\tilde r} + \frac{3}{r}=2, \ \text{if } r \in [2,3],
		\\
		\tilde r=\frac{r(r-4)}{4r^2-13r+6}, \ \text{if }  r \in [3,6],
	\end{cases}
\end{equation*} 

	\item[$\star$ for $k=2$] 
		\begin{equation*}
		\begin{cases}
			\frac{2}{\tilde r} +\frac{3}{r}=4,
			\ \text{ if } r< \frac 32,
			\\
			\tilde r = \frac{2r(3r-4)}{13r^2-26r+12}  \ \text{ if } r \in [\frac 32,2),
			\\
				\frac{1}{\tilde r}+ \frac{6}{r}=9, \ \text{ if }  2\leq  r<+ \infty.
		\end{cases}
	\end{equation*}
	\end{trivlist}
\end{THM}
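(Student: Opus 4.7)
I would start from the Duhamel representation for the projected system \eqref{Navier_Stokes_equation_v2}, using the incompressibility identity $u\cdot \nabla u=\nabla\cdot(u\otimes u)$ to shift one derivative onto the heat semigroup:
$$u(t)=e^{\nu t\Delta}u_{0}+\int_{0}^{t}e^{\nu(t-s)\Delta}\mathbb P\,f(s)\,ds-\int_{0}^{t}\nabla\cdot e^{\nu(t-s)\Delta}\mathbb P(u\otimes u)(s)\,ds.$$
Applying $\nabla^{k}$ and then the $L^{\tilde r}_{T}L^{r}$ norm, the two linear pieces decouple. The free evolution is handled by the heat-kernel characterisation of $\dot B^{2-2/\tilde r}_{r,\tilde r}$, which is precisely tailored so that $\|\nabla^{k}e^{\nu t\Delta}u_{0}\|_{L^{\tilde r}_{T}L^{r}}\lesssim \|u_{0}\|_{\dot B^{2-2/\tilde r}_{r,\tilde r}}$ in the extremal case $k=2$ (and \emph{a fortiori} for $k=0,1$ since lower-regularity Besov spaces are larger). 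Similarly the forcing is controlled by $\|f\|_{L^{\infty}_{T}\dot B^{2}_{r,\infty}}$ combined with the $L^{q}\to L^{r}$ smoothing of the heat kernel and the boundedness of $\mathbb P$ on $L^{q}$ for $q\in(1,\infty)$.

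The heart of the argument is the nonlinear piece. Using $\|\nabla^{k+1}e^{\tau\Delta}g\|_{L^{r}}\le C_{\nu}\tau^{-\gamma}\|g\|_{L^{q}}$ with $\gamma=\frac{k+1}{2}+\frac{3}{2}(\frac{1}{q}-\frac{1}{r})$, one reduces to bounding $\int_{0}^{t}(t-s)^{-\gamma}\|u(s)\|_{L^{2q}}^{2}\,ds$. Gagliardo-Nirenberg gives $\|u\|_{L^{2q}}\le C\|u\|_{L^{2}}^{1-\theta}\|\nabla u\|_{L^{2}}^{\theta}$ for an admissible $\theta=\theta(q)$, and Leray's estimates \eqref{ineq_NS_Leray}-\eqref{ineq_D_NS_Leray} provide $u\in L^{\infty}_{T}L^{2}$ and $\nabla u\in L^{2}_{T}L^{2}$. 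The various regimes listed in the theorem (the break-points $r=2,3,6$ for $k=1$, and $r=\frac{3}{2},2$ for $k=2$) correspond exactly to the Sobolev-embedding thresholds that determine which $q$ and $\theta$ are admissible; at each threshold the time-integrability of $\|u(s)\|_{L^{2q}}^{2}$ and the heat-kernel singularity exponent $\gamma$ must balance via a Young (or Hardy-Littlewood-Sobolev) convolution estimate so as to land on $L^{\tilde r}_{T}$.

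In the subcritical regimes the above computation closes directly from Leray's bounds. However, in several ranges — notably $r\ge 2$ for $k=2$ — the GN exponent makes the right-hand side superlinear in the bootstrapped quantity itself, so one obtains an integral inequality of the form
$$\phi(t)\le A+C\int_{0}^{t}(t-s)^{-\alpha}\Psi(s)\,\phi(s)^{\beta}\,ds,\qquad \beta>1,\ \alpha<1,\ \Psi\in L^{\sigma}_{T},$$
to which classical Gronwall does not apply. Here the new Bihari-LaSalle step enters: the idea is to interpolate $\phi^{\beta}$ between the quantity being bootstrapped and the \emph{a priori} Leray bound, and then iterate on successive time slices on which $\phi$ remains under a moving threshold. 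The main obstacle I expect is checking that the constants produced through this iteration remain finite as $r$ approaches the endpoint values (in particular as $\tilde r\downarrow 0$ when $r\to+\infty$ in the $k=2$ case), and that the Besov assumption on $u_{0}$ is sharp enough to absorb the borderline contribution from the free evolution when the singularity $\gamma$ is closest to $1$. Once the Bihari-LaSalle closure has produced a bound for one value of $k$, the other two cases follow by rerunning the same scheme with the exponents tuned as in the statement.
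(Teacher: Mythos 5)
There are two genuine gaps in your plan, both at the heart of the argument.

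First, the time singularity for $k\ge 1$. Writing Duhamel around the plain heat semigroup and putting all $k+1$ derivatives on the kernel gives, as you state, $\|\nabla^{k+1}e^{\tau\Delta}g\|_{L^r}\le C\tau^{-\gamma}\|g\|_{L^q}$ with $\gamma=\frac{k+1}{2}+\frac32(\frac1q-\frac1r)\ge \frac{k+1}{2}$. For $k=1$ this is already $\ge 1$ and for $k=2$ it is $\ge \frac32$, so the integral $\int_0^t(t-s)^{-\gamma}\|u(s)\|_{L^{2q}}^2\,ds$ you propose to bound is divergent before any estimate on $u$ enters. This is precisely the obstruction the paper is built to circumvent: it writes Duhamel around a heat equation perturbed by the transport term along the flow $\theta_{s,\tau}(\xi)$ of $u$, so that after choosing the freezing point $(\tau,\xi)=(t,x)$ the nonlinear contribution carries the \emph{difference} $[u(s,\theta_{s,t}(x))-u(s,y)]^{\otimes 2}$; measuring that difference in the Sobolev--Slobodeckij seminorm $[u(s,\cdot)]_{W^{\gamma,2r}}$ and absorbing $|x-y|^{3+2r\gamma}$ into the Gaussian buys an extra factor $(t-s)^{\gamma}$, which is what makes $(t-s)^{-1+\gamma}$ (for $k=1$) and $(t-s)^{-3/2+\gamma}$ (for $k=2$, with $\gamma>\frac12$) integrable. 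The paper also takes the curl to kill the pressure term before this step. Without this device (or an equivalent way of trading spatial regularity of $u\otimes u$ against the kernel singularity), your scheme cannot even be written down for $k=1,2$.

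Second, the direction of the Bihari--LaSalle step. You set up an integral inequality with exponent $\beta>1$ on the bootstrapped quantity and propose to iterate on time slices. For a superlinear comparison inequality with non-small Leray data this can only give a bound up to a possibly small time (the comparison ODE blows up), so it cannot yield the global-in-$[0,T]$ conclusions of the theorem. The paper's Lemma on the singular Bihari--LaSalle inequality requires $\beta\in[0,1)$, and the entire choice of Gagliardo--Nirenberg/Brezis--Mironescu interpolations (and the break-points $r=3,4,6$ for $k=1$ and $r=\frac32,2$ for $k=2$) is dictated by the need to make the exponent of $\|\nabla^k u(s,\cdot)\|_{L^r}$ in the upper bound strictly less than one, with the complementary powers falling on $\|u\|_{L^\infty_TL^2}$ and $\|\nabla u\|_{L^2_TL^2}$, which Leray controls. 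The conclusion is then closed by the non-decreasing rearrangement argument plus Hardy--Littlewood--Sobolev, not by a slicing iteration. You would need to redesign your interpolation step so as to land in the sublinear regime before any Gr\"onwall-type closure can work.
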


\begin{remark}
Our results do not reach the Prodi-Serrin criterion, see \cite{prod:59} \cite{serr:62}, $\|\nabla^k u\|_{L^{\tilde r}_T L^r} <+ \infty$ with $\frac{2}{\tilde r}+ \frac{3}{r} \leq 1+k$, 
which implies that there is no blow-up of the solution $u$.

With our method, it is not possible to consider the norm $\|\nabla \omega\|_{L^{\tilde r}_T L^{\infty}}$, because we use a kind of Bihari-LaSalle argument which involves to get, in the upper-bound, a contribution of the same norm instead of the $\|\nabla^2 u(s,\cdot)\|_{ L^{\infty}}$ (and because there is \textit{a priori} no control in $L^\infty$ of $\nabla u$ by $\omega$).
\\

For $k=0$ and $r= + \infty$, the result matches with $u \in L^1([0,T], L^\infty(\R^3,\R^3))$ already provided by \cite{tarta:78}.
\end{remark}

The statement comes from many uses of the \textit{a priori} controls \eqref{ineq_NS_Leray}-\eqref{ineq_D_NS_Leray} and a new Bihari-LaSalle result, the strategy of the proof changes according to the indexes of the Lebesgue space in the considered estimates.
 Therefore, there are many different cases, and so, by interpolation and by Sobolev embedding (or generally by Gagliardo-Nirenberg inequality), many possible outcomes. In Section \ref{sec_interpol_finale} further, we aim to obtain the best result from Theorem \ref{THEO_2} and also from the result \eqref{VAsseur} for $k=2$ by Constantin \cite{cons:91} (also \cite{lion:96} \cite{vass:10}...).


\begin{figure}[h!]
	\begin{center}
		\includegraphics[scale=1]{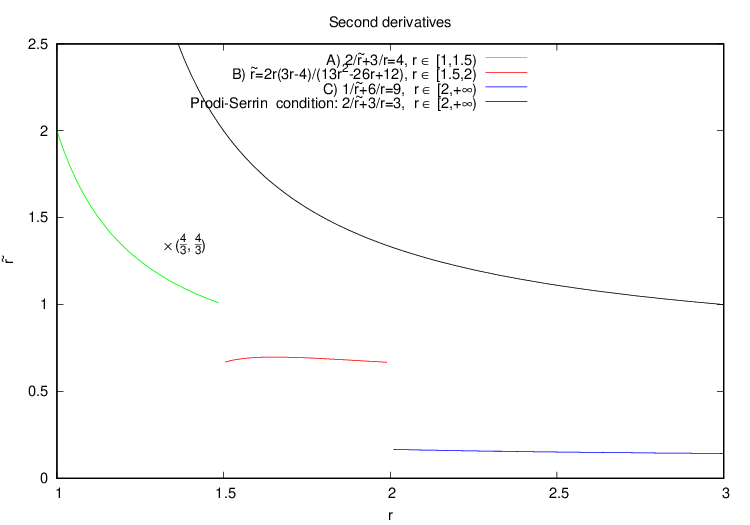}
		\caption{$\nabla^2 u \in L^{\tilde r}([0,T], L^r(\R^3,\R^3))$}
		\label{D2_initial}
	\end{center}
\end{figure}

The paper is organised as following.
In Section \ref{sec_def}, we set some useful notations and results for our analysis.
The proof of Theorem \ref{THEO_2} is developed in Section \ref{sec_preuve}.
We perform, in Section \ref{sec_interpol_finale}, interpolations between the already known controls and with our new results.
We also provide, in Section \ref{sec_weight_lebesgue}, a counterpart in weighted Lebesgue space of Theorem \ref{THEO_2_bis}. 

\mysection{Definitions and useful results}
\label{sec_def}

We denote by $C>1$, depending only on the dimension $d$,
a generic constant which can be different from line to line.
\\

The Euclidean norm is denoted by $|\cdot|$, i.e. $|z|= \big (\sum_{k=1}^3 z_k^2\big )^{\frac{1}{2}}$, where we decompose $z= z_1 e_1+ z_2 e_2 +z_3 e_3$ with $(e_1,e_2, e_3)$ the canonical base of $\R^3$.

%
%
%

\subsection{Functional notations}

\subsubsection{Differential operators}

The derivative operator  $\nabla$  matches with the gradient or with the Jacobian matrix;
the symbol  $\nabla \cdot$ stands for the divergence,
the Laplacian is as usual noted by $\Delta$.
%

From these notations, we have
for any  $(t,x)\in \R_+\times \R^{3}$, 
\begin{equation}\label{def_contract_tensor}
	u(t,x)\cdot  \nabla u(t,x) = \big ( \sum_{j=1}^d u_j (t,x)  \partial_{x_j}  u_i (t,x) \big )_{i \in   \leftB 1,3 \rightB}, 
\end{equation}
for 
$  u= \big (u_i \big)_{i \in  \leftB 1,3 \rightB}$.

The Leray projector $\mathbb P$ is defined, for any function $\varphi : \R^3 \rightarrow \R^3$, smooth enough, by
\begin{equation}\label{def_P}
	\forall x \in \R^3,	\P \varphi (x)= \varphi(x)+ \nabla (-\Delta)^{-1}\nabla\cdot \varphi(x)=:\varphi(x)-\Xi \varphi(x).
\end{equation}
In case of Navier-Stokes equation, the operator $\Xi$ matches with the pressure part 
which is indeed a gradient term in the  Helmholtz-Hodge decomposition.
\\


%

Let us also explicit the non-linear part in
 \eqref{Navier_Stokes_equation_v2}, for any  $(t,x)\in \R_+\times \R^{3}$, we write
\begin{equation}\label{def_contract_tensor}
	u(t,x)\cdot  \nabla u(t,x) = \Big ( \sum_{j=1}^3 u_j (t,x)  \partial_{x_j}  u_i (t,x) \Big )_{i \in   \leftB 1,3 \rightB}, 
\end{equation}
with 
$  u= \big (u_i \big)_{i \in  \leftB 1,3 \rightB}$.

Importantly, from the incompressible assumption, we have for any $z \in \R^3$,
\begin{equation}\label{div(uDu)}
	 \Big ([u(t,x)-u(t,z)]\cdot  \nabla u(t,x) \Big )=\nabla_x \cdot [u(t,x)-u(t,z)]^{\otimes 2}= \sum_{j=1}^3 \partial_{x_j}  \sum_{i=1}^3  [u_j (t,x)-u_j (t,z)]    [u_i (t,x)-u_i (t,z)].
\end{equation}
The symbol $\otimes$ stands for the dyadic product, 
i.e. for any $u,v \in \R^3$
\begin{equation}
	 u \otimes v
	: =
	\left (
	\begin{matrix}
		u_1v_1 & u_1v_2 & u_1v_3 \\
		u_2v_1 & u_2v_2 & u_2v_3 \\
		u_3v_1 & u_3v_2 & u_3v_3
		\end{matrix} 
	\right ),
\end{equation}
also, for the sake of simplicity, we take $u^{\otimes 2}:= u \otimes u$.

Finally, we can rewrite equation \eqref{Navier_Stokes_equation_v2} by 
\begin{equation}
\label{Navier_Stokes_equation_v3}
\partial_t  u(t,x)+ \mathbb P[ \nabla \cdot  u^{\otimes 2}   ](t,x)
= \nu \Delta u(t,x)+ \mathbb P  f(t,x) .
\end{equation}

\subsubsection{Symmetric increasing rearrangement}

Our Bihari-LaSalle argument thoroughly depends on the non-decreasing rearrangement.
We could use a non-increasing rearrangement, see \textit{e.g.} \cite{lieb:loss:01}, which seems less intuitive in our analysis.
\\

Let $E$ a one dimensional bounded\footnote{In this case, the properties of the symmetric increasing rearrangement clearly match with the symmetric decreasing rearrangement ones.} Euclidean space ($[0,T]$, $T>0$, in the article).
For any $\varphi: E \rightarrow \R$, we define distribution function 
\begin{equation*}
	\mu_\varphi (s) = \lambda \big ( \{ x\in E : | \varphi (x)| >s \} \big ),
\end{equation*}
where $\lambda (\cdot)$ stands for the Lebesgue measure.

The decreasing rearrangement is written by
\begin{equation*}
	\varphi_*(t)=\inf\{s\in[0,\infty] : \mu_\varphi(s)\leq t\}.
\end{equation*}
We are then able to state the associated increasing rearrangement by:
\begin{equation*}
	\varphi ^*(t)=- (-\varphi)_*.
\end{equation*}
Let us remark that for any $p>0$, we have the $L^p$ invariance of the rearrangement,
\begin{equation}\label{identity_Lp_rearragement}
	\|\varphi\|_{L^p}= 	\|\varphi^*\|_{L^p}=	\|\varphi_*\|_{L^p}.
\end{equation}
The rearrangement keeps the order of inequalities, and we have $(\varphi^p)^*=(\varphi^*)^p$.

We can also refer to \cite{kawo:85}, \cite{carb:95}, \cite{albe:00} and  \cite{bere:lach:04}
for alternative increasing rearrangement transformation. 

Eventually, in the paper we have to use HardyÐLittlewood inequality: for all functions $\varphi,\psi$ non-negative measurable real functions,
\begin{equation}\label{ineq_Hardy_Littlewood_inequality}
\int_{E} \varphi(x) \psi(x) dx \leq \int_{E} \varphi^*(x) \psi^*(x) dx.
\end{equation}

\subsection{Heat kernel}
\label{sec_Gaussian_properties}
In this short section, we recall some useful notations and inequalities for our analysis.

In the following, we denote for any $(t,x) \in \R_+ \times \R^3$:
\begin{equation}\label{def_h_tx}
	h_\nu (t,x):= \frac{e^{-\frac{|x|^2}{4 \nu t }}}{(4 \pi \nu t)^{\frac{3}{2}}},
\end{equation}
the heat kernel, satisfying the equation
\begin{equation}\label{eq_chaleur}
	\begin{cases}
		(\partial_t- \nu \Delta) h_\nu(t,\cdot)=0, \forall t >0,
		\\
		h_\nu(0,\cdot )= \delta_0,
	\end{cases}
\end{equation}
where  $\delta_0$ stands for the Dirac distribution.
\\

In order to simplify some notations, we also write for all $0 \leq s \leq t \leq T$ and $(x,y ) \in \R^3$
\begin{equation}\label{def_tilde_p}
	\tilde p (s,t,x,y) := h_\nu (t-s,x-y).
\end{equation}
From this notation, we define $\tilde P$ and $ \tilde G$ 
the semi-group and the Green operator associated with the heat equation,  defined, for all $\varphi : \R^3 \to \R^3$ and  $\psi : [0,T] \times \R^3 \to \R^3 $ smooth enough, and any $(t,x) \in [0,T] \times \R^3$  by:
\begin{eqnarray}\label{def_tilde_G_tilde_P}
	\tilde P \varphi(t,x) &:=& \int_{\R^3} h_\nu (t,x-y) \varphi (y) dy,
	\nonumber \\
	\tilde G \varphi(t,x) &:=& \int_0^t  \int_{\R^3} h_\nu (t-s,x-y) \psi  (s,y) dy \, ds.
\end{eqnarray}
For any $\delta >0$, there is a constant $ 1 < C$ 
such that:
\begin{equation}\label{ineq_absorb}
	\forall x \in \R^3, \ |x|^\delta e^{-|x|^2} \leq C e^{-C^{-1} |x|^2}.
\end{equation}
	Finally, for any
	$1\leq q \leq + \infty$,
	\begin{equation}\label{ineq_h_nu_Lp}
		\| h_\nu (t,x-\cdot )\|_{L^q}
		= \Big ( \int_{\R^3} \frac{e^{-\frac{p |x-y|^2}{4 \nu t }}}{(4 \pi \nu t)^{\frac{3p}{2}}} dy \Big )^{\frac 1p} 
		= \Big ( \frac{(p^{-1}4 \pi \nu t)^{\frac{3}{2}}} {(4 \pi \nu t)^{\frac{3p}{2}}} \Big )^{\frac 1p} 
		= p^{-\frac{3}{2}} (4 \pi \nu t)^{\frac{3}{2}(\frac{1-p}{p})}  .
	\end{equation}

\subsection{Interpolation results}

\subsubsection{In Lebesgue space}
\label{sec_Interpol_Lebesgue}

\textbf{Simple interpolation}
\\

It is well-known, cf. \cite{brez:99}, that for all $1\leq p \leq r \leq q \leq + \infty$, we have for any $\varphi: L^p(\R^3,\R^3) \cap L^q(\R^3,\R^3) $
\begin{equation}\label{ineq_interpol_Lebesgue}
	\|\varphi\|_{L^{r}} \leq 	\|\varphi\|_{L^p} ^{\alpha} \|\varphi\|_{L^q} ^{1-\alpha},
\end{equation}
with
\begin{equation*}
	\frac{1}{r} = \frac{\alpha}{p} + \frac{1-\alpha}{q},
\end{equation*}
which is equivalent to
\begin{equation}\label{Def_Alpha}
	\alpha 	= \frac{p(q-r)}{r(q-p)} , \ \ 1-\alpha 	= \frac{q(r-p)}{r(q-p)}.
\end{equation}
\vspace{0.2cm}

\textbf{Double interpolation}
\\

From \eqref{ineq_interpol_Lebesgue}, \eqref{Def_Alpha}, we can perform another interpolation w.r.t. a second variable.
For any function $\psi \in L^{\tilde p} ([0,T], L^{p}(\R^3,\R^3))\cap L^{\tilde q} ([0,T], L^{q}(\R^3,\R^3))$
\begin{equation}
	\|\psi (t,\cdot)\|_{L^{r}} \leq 	\|\psi(t,\cdot)\|_{L^p} ^{\alpha} \|\psi(t,\cdot)\|_{L^q} ^{1-\alpha},
\end{equation}
where $\alpha$ is defined in \eqref{Def_Alpha}.
Also, by H\"older inequality, with $1\leq m,n\leq + \infty$ such that $m^{-1}+ n^{-1}=1$, for any $\tilde r \in [\min(\tilde p , \tilde q), \max(\tilde p, \tilde q)]$,
\begin{eqnarray*}
	\|\psi \|_{L^{\tilde r}L^{r}} 
	&\leq& \Big ( \int	\|\psi(t,\cdot)\|_{L^p} ^{\alpha \tilde r} \|\psi(t,\cdot)\|_{L^q} ^{(1-\alpha)\tilde r} dt \Big )^{\frac{1}{\tilde r}}
	\nonumber \\
		&\leq& \Big ( \int	\|\psi(t,\cdot)\|_{L^p} ^{m\alpha \tilde r}  dt \Big )^{\frac{1}{m \tilde r}}
		 \Big ( \int	\|\psi(t,\cdot)\|_{L^q} ^{n(1-\alpha) \tilde r}  dt \Big )^{\frac{1}{n\tilde r}} 
		 \nonumber \\
		 &=& \|\psi\|_{L^{m\alpha \tilde r} L^p}^\alpha \|\psi\|_{L^{n(1-\alpha) \tilde r} L^q}^{1-\alpha}.
\end{eqnarray*}
To match with the condition of integrability in $t$, we have to impose
\begin{equation*}
	m\alpha \tilde r= \tilde p \ \text{ and } n(1-\alpha) \tilde r= \tilde q.
\end{equation*}
 From the condition on the parameters of H\"older estimates we deduce $m=\frac{\tilde p}{\alpha \tilde r}$ and $ n=\frac{m}{m-1} 
 =\frac{\tilde p}{\tilde p - \alpha \tilde r}$, and
\begin{equation}\label{eq_double_interpol}
	\tilde r 
	= \frac{\tilde q}{n(1-\alpha)}
	=\frac{\tilde q\tilde p r(q-p)}{\tilde p q(r-p)+\tilde q p(q-r)},
\end{equation}
where we recall the final double interpolation
 \begin{equation*}
 		\|\psi \|_{L^{\tilde r}L^{r}} \leq  C 	\|\psi \|_{L^{\tilde p}L^{p}}^\alpha	\|\psi \|_{L^{\tilde q}L^{q}}^{1-\alpha}.
 \end{equation*}

\vspace{0.2cm}

\textbf{Hardy-Littlewood-Sobolev inequality}
\\

We also thoroughly use Hardy-Littlewood-Sobolev inequality, see Section V in \cite{stei:70}, that we recall below in dimension $1$:
\begin{equation}\label{ineq_Hardy_Littlewood_Sobolev}
\forall \alpha \in (0,1), \ (\tilde r, r') \in (0,+\infty)^2, \	\| \ |\cdot|^{- \alpha} \star f \|_{L^{\tilde r}} \leq C \|f\|_{L^{r'}},
\end{equation}
with $\frac 1{r'} + \alpha = 1+\frac{1}{\tilde r}$.
\\

It is possible to extend \eqref{ineq_Hardy_Littlewood_Sobolev} to the case $\tilde r=1$, in the following case
\begin{equation*}
	\int_0^T |\int_0^t (t-s)^{-\alpha} f(s) ds | dt \leq 	\int_0^T |\int_0^T |t-s|^{-\alpha} f(s) ds | dt \leq C(1-\alpha)^{-1} T ^{1-\alpha}	 \|f \|_{L^1}. 
\end{equation*}


\subsubsection{In Sobolev space}
\label{sec_Interpol_Sobolev}

For any, $r \geq 1$ and all $1\leq p,q \leq + \infty$, from interpolation inequality of  Sobolev spaces by  Brezis Mirunescu \cite{brez:miru:18}, we have for any $\varphi: L^p(\R^3,\R^3) \cap W^{1,q}(\R^3,\R^3) $
\begin{equation}\label{ineq_Brez_Mirunescu}
\|\varphi\|_{W^{\gamma,r}} \leq C \|\varphi\|_{W^{0,p}}^\alpha \|\varphi\|_{W^{1,q}}^{1-\alpha}
=C \|\varphi\|_{L^p}^\alpha \|\nabla \varphi\|_{L^q}^{1-\alpha},
\end{equation}
such that
\begin{equation*}
\begin{cases}
\frac{1}{r} = \frac{\alpha}{p} + \frac{1-\alpha}{q},
\\
\gamma= (1-\alpha).
\end{cases}
\end{equation*}
Similarly to the Lebesgue space
\begin{equation}
\alpha \frac{q-p}{pq}	= \frac{q-r}{rq},
\end{equation}
then
\begin{equation}\label{Def_Alpha_Sobolev}
\alpha 	= \frac{p(q-r)}{r(q-p)} , \ \ 1-\alpha 	= \frac{q(r-p)}{r(q-p)}.
\end{equation}

\subsection{New Bihari-LaSalle lemma}
\label{sec_gronwall}

Our analysis is strongly based on a Bihari-LaSalle type result, but with a time singularity, similar to the Gr\"onwall Henry lemma see Lemma 7.1.1. \cite{henr:81}.
Let us mention similar results in Theorems 30 and 38
in \cite{drag:87}, or equivalently 
p 371 in \cite{mitr:pevc:fink:91}. But for the sake of  completeness, we provide a suitable statement accompanied with its proof.  
%

\begin{lemma}\label{lemma_gronwall}
	Let $T \in \R_+$. Assume that there are continuous non-negative functions $\varphi, \psi, a: [0,T] \mapsto \R_+$, where $a$ is a non-decreasing function,
	such that  
	\begin{equation*}
		\varphi(t) < a(t) + \int_0^t (t-s)^{-1+\gamma} \psi(s) \varphi^{\beta}(s)ds,
	\end{equation*}
	for a given $\beta \in [0,1)$ and $\gamma \in (0,1]$, then
	\begin{equation*}
		\varphi^*(t) \leq K_t^*(t),
	\end{equation*}
with
\begin{equation}\label{def_K}
	K_t(\tilde s)=
		\Big (a^{1-\beta}(t) + 
		 \int_0^{\tilde sÊ} (t-s)^{-1+\gamma} \psi(s) ds\Big ) ^{\frac 1{1-\beta}}
\end{equation}
and where  $\varphi *$ stands for the non-decreasing rearrangement of $\varphi$ and $ K_t^*(t)$ for the non-decreasing rearrangement of $t \mapsto K_t(t)$.
\end{lemma}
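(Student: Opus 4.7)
The plan is to adapt the classical Bihari-LaSalle argument to the time-singular kernel. For fixed $t\in[0,T]$, I introduce
\[
F_t(\tilde s)\,:=\,a(t)+\int_0^{\tilde s}(t-u)^{\gamma-1}\psi(u)\varphi^\beta(u)\,du,\qquad \tilde s\in[0,t],
\]
which is non-decreasing in $\tilde s$, with $F_t(0)=a(t)$ and, by the hypothesis together with the monotonicity of $a$, $\varphi(t)<F_t(t)$. The template strategy is: were the pointwise comparison $\varphi(\tilde s)\leq F_t(\tilde s)$ available on $[0,t]$, one would substitute $\varphi^\beta\leq F_t^\beta$ inside the integral to obtain $(F_t^{1-\beta})'(\tilde s)\leq(1-\beta)(t-\tilde s)^{\gamma-1}\psi(\tilde s)$, and integration on $[0,t]$ would yield $F_t^{1-\beta}(t)\leq a^{1-\beta}(t)+(1-\beta)\int_0^t(t-u)^{\gamma-1}\psi(u)\,du\leq K_t^{1-\beta}(t)$, hence $\varphi(t)\leq K_t(t)$. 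Passing to the non-decreasing rearrangement of both sides would then give the desired $\varphi^*(t)\leq K_t^*(t)$.

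The main obstacle is that the pointwise comparison $\varphi(\tilde s)\leq F_t(\tilde s)$ is not a direct consequence of the hypothesis: the kernel $(\tilde s - u)^{\gamma-1}$ appearing at time $\tilde s$ strictly dominates $(t-u)^{\gamma-1}$ whenever $\tilde s<t$ and $\gamma\in(0,1)$, so the hypothesis integral exceeds the one defining $F_t(\tilde s)$ and the substitution cannot be performed as is. The symmetric non-decreasing rearrangement is introduced precisely to compensate for this: bounding $\varphi^\beta(u)\leq (\varphi_t^*(t))^\beta=(\sup_{[0,t]}\varphi)^\beta$ uniformly inside the integral yields
\[
\varphi(t)\,<\,a(t)+(\varphi_t^*(t))^\beta\,I(t),\qquad I(t):=\int_0^t(t-s)^{\gamma-1}\psi(s)\,ds.
\]
Running this inequality at every $\tau\leq t$, taking the supremum, and invoking Hardy-Littlewood~\eqref{ineq_Hardy_Littlewood_inequality} together with the distribution-function identity $I^*(t)\leq \sup_{s\leq t}I(s)$ converts this into a closed Bihari-type estimate on the rearranged quantity. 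The elementary algebraic implication ``$a\leq b+ca^\beta$ with $\beta\in[0,1)$ implies $a^{1-\beta}\leq b^{1-\beta}+c$'' then closes the argument.

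The key difficulty is the final rearrangement bookkeeping: because $I(t)$ is not monotone in $t$, a crude supremum would only produce the weaker bound $\varphi^*(t)\leq (a^{1-\beta}(t)+\sup_{s\leq t}I(s))^{1/(1-\beta)}$, which is in general strictly larger than $K_t^*(t)$. Obtaining the sharp conclusion requires carefully aligning the non-decreasing rearrangement of $\varphi$ with that of $t\mapsto K_t(t)$ via the equimeasurability and Hardy-Littlewood properties recalled earlier, and this alignment is the delicate technical step of the proof.
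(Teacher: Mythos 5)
Your proposal does not prove the lemma as stated: it establishes a strictly weaker bound and then defers the actual content of the statement. The part you carry out in full --- bounding $\varphi^{\beta}(u)$ by $\big(\sup_{[0,t]}\varphi\big)^{\beta}$ inside the integral, closing the resulting inequality for $M(t):=\sup_{[0,t]}\varphi$ via the algebraic implication, and using $\varphi^*(t)\le M(t)$ --- yields only
\begin{equation*}
\varphi^*(t)\;\le\;\Big(a^{1-\beta}(t)+\sup_{s\le t}\int_0^s (s-u)^{-1+\gamma}\psi(u)\,du\Big)^{\frac{1}{1-\beta}},
\end{equation*}
which, as you yourself observe, is in general strictly larger than $K_t^*(t)$: the rearrangement $K^*$ evaluated at $t$ can be much smaller than the running supremum of $s\mapsto K_s(s)$ when $\psi$ concentrates early. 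Your closing sentence, that the sharp conclusion ``requires carefully aligning the non-decreasing rearrangement of $\varphi$ with that of $t\mapsto K_t(t)$,'' is not a proof step; that alignment \emph{is} the lemma, and it is exactly what is left undone. Replacing $\varphi^{\beta}(u)$ by a global supremum over $[0,t]$ destroys the $u$-dependence that must survive into the integral $\int_0^{\tilde s}(t-s)^{-1+\gamma}\psi(s)\,ds$ defining $K_t$, so no amount of rearrangement bookkeeping downstream of that substitution can recover the stated bound.

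The missing idea is a comparison with the equality case through a first-crossing-time (bootstrap) argument, which is how the paper proceeds. One assumes there is $\hat s$ with $\varphi(\hat s)>K_{\hat s}(\hat s)$, sets $t_0=\inf\{\tilde s:\varphi(\tilde s)>K_{\tilde s}(\tilde s)\}$, so that by continuity $\varphi(t_0)=K_{t_0}(t_0)$ while $\varphi^*(t')\le K_{t'}^*(t')$ for all $t'\le t_0$. On $[0,t_0]$ one then applies the Hardy--Littlewood inequality \eqref{ineq_Hardy_Littlewood_inequality} to the integral, replaces $\varphi^{*\beta}$ by $K^{*\beta}$ (legitimate only before the crossing time), pulls the non-decreasing factor $K_{t'}^{*\beta}(t')$ out of the integral using \eqref{identity_Lp_rearragement}, and exploits the explicit form \eqref{def_K} of $K$ to arrive at a strict inequality at $t'=t_0$ that contradicts $\varphi(t_0)=K_{t_0}(t_0)$. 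This mechanism keeps the comparison local in $s$ rather than global, which is precisely what your supremum substitution forfeits; without it (or an equivalent device) the sharp bound $\varphi^*(t)\le K_t^*(t)$ is out of reach, and your argument only proves the weaker supremum-based statement.
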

\begin{proof}[Proof of Lemma \ref{lemma_gronwall}]

	The idea of the proof is to compare with the equality case, which allows to differentiate the corresponding non-linear Volterra equation.
	

	First, it is clear that $\varphi(0)<a(0) = \tilde K_0(0)$.
	

 	Let us suppose that there is a point $\hat s \in [0,t)$ such that $\varphi(\hat s) > K_t(\hat s)$, under this assumption we can define 
	\begin{equation*}
		t_0 := \inf\{ \tilde s \in [0,t) : \varphi(\tilde s) > \tilde K_{\tilde s}(\tilde s) \},
	\end{equation*}
	by the continuity of $\varphi$ and $\tilde K$ we have $\varphi(t_0)= \tilde K_{t_0}(t_0)$.
	We also get for any $t' \in [0,t_0]$ that $\varphi^*(t') \leq K^*_{t'}(t')$.
	 
	 We derive from assumption on $\varphi$ that for any $t' \in [0,t_0]$,
	\begin{eqnarray*}
		\varphi(t') &<& a(t') + \int_0^{t'} (t'-s)^{-1+\gamma} \psi(s) \varphi^{\beta}(s)ds
		\nonumber \\
		&\leq  & a(t') + \int_0^{t'} \big ((t'-s)^{-1+\gamma} \psi(s)\big )^*  \varphi^{*\beta}(s)ds,
		\end{eqnarray*}
	by Hardy Littlewood inequality \eqref{ineq_Hardy_Littlewood_inequality}. 
	We derive by definition of $t_0$,
	\begin{eqnarray*}
		\varphi(t') &<&  a(t') + \int_0^{t'} \big ((t'-s)^{-1+\gamma} \psi(s)\big )^* \tilde K_{s}^{*\beta}(s)ds
		\nonumber \\
		&\leq & a(t') +  \tilde K_{t'}^{*\beta}(t') \int_0^{t'} (t'-s)^{-1+\gamma} \psi(s) ds,
	\end{eqnarray*}
as $\tilde K_{s}^{*\beta}(s)$ is a non-decreasing function\footnote{This is exactly this argument which imposes us to use the monotonous rearrangements.} and by the $L^1$ invariance of the non-decreasing rearrangement, see \eqref{identity_Lp_rearragement}.
	We get from \eqref{def_K},
	\begin{eqnarray*}
			\varphi(t') &<& a(t') + 
			\tilde K_{t'}^{*\beta}(t')\Big (  K_{t'}^{1-\beta}(t')- a^{1-\beta}(t') \Big )
			\nonumber \\
			&= & a^{1-\beta}(t') \big (a^{\beta}(t') - 
			 \tilde K_{t'}^{*\beta}(t')\big )+ 
			  \tilde K_{t'}^{*\beta}(t')  K_{t'}^{1-\beta}(t')
			\nonumber \\
			&\leq &
 K_{t_0}^{*\beta}(t_0)  K_{t'}^{1-\beta}(t').
		\end{eqnarray*}
	We then deduce by taking the Symmetric increasing rearrangement,	
	\begin{eqnarray*}
		\varphi^*(t') &<& 
		 K_{t_0}^{*\beta}(t_0)  K_{t'}^{*(1-\beta)}(t').
	\end{eqnarray*}
The above inequality is absurd by taking $t'=t_0$, in view of hypothesis\footnote{Because the $(\eta^p)^*= (\eta^*)^p$ for any function positive $\eta$ and any $p >0$.} on $t_0$, the result then follows.

\end{proof}

\begin{remark}
The Gr\"onwall Henry type case, i.e. $\beta =1$, is involved.
Indeed adapting the above analysis leads to consider the upper-bound function
\begin{equation*}
	K_t(\tilde s)=a(t) \exp \Big ( \int_0^{\tilde sÊ} (t-s)^{-1+\gamma} \psi(s) ds\Big ),
\end{equation*}
and the control of $\varphi(t')$ leads to
\begin{equation*}
	\varphi(t') < a(t') +   K_{t'}^{*}(t')\Big (  \ln \big (K_{t'}(t')\big )- \ln \big (a(t') \big ) \Big )
	\leq 
	K_{t_0}^{*}(t_0) \ln  K_{t'}(t').
\end{equation*}
\end{remark}

As a by-product, we have the following result.
\begin{cor}\label{corol_gronwall}
	Let $T \in \R_+$. Assume that there are continuous non-negative functions $\varphi, \psi, a: [0,T] \mapsto \R_+$, where $a$ is a non-decreasing function,
	such that  
	\begin{equation*}
		\varphi(t) \leq a(t) + \int_0^t (t-s)^{-1+\gamma} \psi(s) \varphi^{\beta}(s)ds,
	\end{equation*}
	for a given $\beta \in [0,1)$ and $\gamma \in (0,1]$, then for any $r \geq 1-\beta$ we have
	\begin{equation*}
		\|\varphi\|_{L^r} \leq
			\|a\|_{L^{r} }+ C \|\psi\|_{L^{\tilde r}}^{1-\beta}
	\end{equation*}
	with $\frac{1}{\tilde r}= \frac{1-\beta -\gamma r}{r} $.
\end{cor}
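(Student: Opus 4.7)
The plan is to feed the pointwise rearrangement majorant from Lemma \ref{lemma_gronwall} into the Hardy--Littlewood--Sobolev inequality \eqref{ineq_Hardy_Littlewood_Sobolev}. First, by replacing $a$ by $a+\varepsilon$ and sending $\varepsilon\downarrow 0$ at the end, we may upgrade the large inequality in the hypothesis to the strict one required by Lemma \ref{lemma_gronwall}, which then yields $\varphi^*(t)\le K_t^*(t)$ for every $t\in[0,T]$. Using the $L^r$ invariance of the non-decreasing rearrangement \eqref{identity_Lp_rearragement}, together with the fact that $K_\cdot^*(\cdot)$ is by definition the rearrangement of $t\mapsto K_t(t)$, gives
$$\|\varphi\|_{L^r([0,T])}=\|\varphi^*\|_{L^r([0,T])}\le \|K_\cdot^*(\cdot)\|_{L^r([0,T])}=\|\,t\mapsto K_t(t)\,\|_{L^r([0,T])}.$$

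Second, I would reduce this last norm to one on the convolution $I(t):=\int_0^t(t-s)^{-1+\gamma}\psi(s)\,ds$ alone. From \eqref{def_K}, $K_t(t)=\bigl(a^{1-\beta}(t)+I(t)\bigr)^{1/(1-\beta)}$, and since $1/(1-\beta)\ge 1$ the elementary inequality $(x+y)^{1/(1-\beta)}\le C_\beta\bigl(x^{1/(1-\beta)}+y^{1/(1-\beta)}\bigr)$ for $x,y\ge 0$ produces
$$K_t(t)\le C_\beta\bigl(a(t)+I(t)^{1/(1-\beta)}\bigr).$$
Taking the $L^r$-norm and exploiting $\|I^{1/(1-\beta)}\|_{L^r}=\|I\|_{L^{r/(1-\beta)}}^{1/(1-\beta)}$ gives
$$\|\varphi\|_{L^r}\le C_\beta\|a\|_{L^r}+C_\beta\|I\|_{L^{r/(1-\beta)}}^{1/(1-\beta)},$$
the assumption $r\ge 1-\beta$ ensuring that the inner exponent $r/(1-\beta)$ is at least $1$.

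Third, I would control $I$ by Hardy--Littlewood--Sobolev. Writing $I$ as the one-dimensional causal convolution of $\psi\mathbf 1_{[0,T]}$ with the locally integrable kernel $t\mapsto t^{-(1-\gamma)}$, the inequality \eqref{ineq_Hardy_Littlewood_Sobolev} applied with singularity exponent $\alpha=1-\gamma$, output exponent $r/(1-\beta)$ and input exponent $\tilde r$ linked by the HLS relation $1/\tilde r+(1-\gamma)=1+(1-\beta)/r$ yields $\|I\|_{L^{r/(1-\beta)}}\le C\,\|\psi\|_{L^{\tilde r}}$; the endpoint $\tilde r=1$ (which corresponds to $r=(1-\beta)/(1-\gamma)$) is handled by the extension of HLS recalled just after \eqref{ineq_Hardy_Littlewood_Sobolev}. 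Plugging this into the previous display closes the estimate. The one genuinely non-routine step is the identity $\|K_\cdot^*(\cdot)\|_{L^r}=\|K_\cdot(\cdot)\|_{L^r}$ used above, which is the reason the monotonous rearrangement appears in Lemma \ref{lemma_gronwall} in the first place; once it is in hand, the rest is a clean combination of a power inequality and HLS.
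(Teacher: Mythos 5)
Your proof is correct and is exactly the paper's intended argument — the paper's own proof is the one-sentence remark that the corollary is ``direct by Hardy--Littlewood--Sobolev and the $L^p$-invariance \eqref{identity_Lp_rearragement} of the rearrangement'' — and your $\varepsilon$-perturbation of $a$ to meet the strict inequality of Lemma \ref{lemma_gronwall}, the elementary power inequality splitting $K_t(t)$, and the remark on the endpoint $\tilde r=1$ are precisely the details being elided. One point worth recording: what your computation actually yields is $\frac{1}{\tilde r}=\gamma+\frac{1-\beta}{r}$ together with the power $\|\psi\|_{L^{\tilde r}}^{1/(1-\beta)}$ (as homogeneity also dictates), which is consistent with how the corollary is invoked later in the text (e.g.\ the relation $\frac{1}{r'}+\frac 12+\frac{3(p-1)}{2p}=1+\frac{1-2(1-\alpha)}{\tilde r}$ in the proof of Lemma \ref{Lemme_u_L_r_Lr}), so the sign of $\gamma r$ and the exponent $1-\beta$ in the corollary's displayed conclusion should be read as typos rather than as a defect of your argument.
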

\begin{proof}
The proof is direct by Hardy Littlewood Sobolev inequality \eqref{ineq_Hardy_Littlewood_Sobolev}, and because the norm in $L^p$, $p>0$ is invariant under non-decreasing rearrangement, see \eqref{identity_Lp_rearragement}.
\end{proof}
%

\mysection{Proof of Theorem \ref{THEO_2_bis}}
\label{sec_preuve}


One of the main novelty of our approach is that we exploit the regularisation by the heat kernel implying some singularities in time, which  is not provided by the energy techniques.


\subsection{Case $k=0$}
\label{sec_preuve_1}

 In order to give some tools for the analysis, we begin with $k=0$, whose result stated below is weaker than the one implied by the case $k=1$, see Lemma \ref{Lemma_u_LrLr_bis} further.

\begin{lemma}\label{Lemme_u_L_r_Lr}
	 For all $T>0$ and  $r, \tilde r \in (1,+ \infty)$, we have $		 \|u\|_{L^{\tilde r}_T L^r}  <+ \infty,$
	if
	\begin{equation*}
	1 <	\frac{1}{\tilde r}+\frac {3}{r}, \forall r >6,
	\end{equation*}
with equality if $r=6$.

\end{lemma}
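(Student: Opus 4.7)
The plan is to represent $u$ by its Duhamel (mild) formulation around the heat semigroup, extract the $L^{r_0}$-smoothing of $\tilde{P}(t-s)\mathbb{P}\nabla\cdot$ applied to $u^{\otimes 2}$, absorb the resulting time singularity against the Leray-energy bound $\|u(\cdot)\|_{L^6}^2\in L^1_T$, and finally bridge the remaining exponent gap by an outer interpolation against the base Sobolev estimate $u\in L^2_T L^6$.

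More concretely, I start from the mild form
\[
u(t)=\tilde{P}(t)u_0+\int_0^t\tilde{P}(t-s)\mathbb{P}f(s)\,ds-\int_0^t\tilde{P}(t-s)\mathbb{P}\nabla\cdot u^{\otimes 2}(s)\,ds
\]
and combine the boundedness of $\mathbb{P}$ on $L^{r_0}$ for $1<r_0<\infty$ with the heat-kernel estimate $\|\tilde{P}(\tau)\nabla g\|_{L^{r_0}}\le C\tau^{-\tfrac12-\tfrac32(1/q-1/r_0)}\|g\|_{L^q}$; the choice $q=3$, for which H\"older gives $\|u^{\otimes 2}\|_{L^3}\le\|u\|_{L^6}^2$, produces for every $r_0\ge 6$
\[
\|\tilde{P}(t-s)\mathbb{P}\nabla\cdot u^{\otimes 2}(s)\|_{L^{r_0}}\le C(t-s)^{-1+\tfrac{3}{2r_0}}\|u(s)\|_{L^6}^2.
\]
Since \eqref{ineq_D_NS_Leray} together with $H^1(\mathbb{R}^3)\hookrightarrow L^6(\mathbb{R}^3)$ yields $\|u(\cdot)\|_{L^6}^2\in L^1_T$, and $(t-s)^{-1+\tfrac{3}{2r_0}}$ belongs to $L^\alpha([0,T])$ for every $\alpha<\tfrac{2r_0}{2r_0-3}$, Young's convolution inequality (or the $L^1$-endpoint of Hardy-Littlewood-Sobolev recalled after \eqref{ineq_Hardy_Littlewood_Sobolev}) gives, once the linear contributions $\tilde{P}u_0$ and $\tilde{G}[\mathbb{P}f]$ are absorbed through the Besov hypothesis on $u_0$ and the integrability of $f$,
\[
\|u\|_{L^{\tilde{r}_0}_T L^{r_0}}<+\infty\qquad\text{for every }\tilde{r}_0<\tfrac{2r_0}{2r_0-3}.
\]

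Because $\tfrac{2r_0}{2r_0-3}<\tfrac{r_0}{r_0-3}$ whenever $r_0>6$, this step alone falls short of the claim. I close the gap through the double Lebesgue interpolation of Section \ref{sec_Interpol_Lebesgue}, combining the previous bound with the base estimate $u\in L^2_T L^6$:
\[
\|u\|_{L^{\tilde{r}}_T L^r}\le C\,\|u\|_{L^2_T L^6}^{\alpha}\,\|u\|_{L^{\tilde{r}_0}_T L^{r_0}}^{1-\alpha},\qquad\tfrac1r=\tfrac{\alpha}{6}+\tfrac{1-\alpha}{r_0},\quad\tfrac{1}{\tilde{r}}=\tfrac{\alpha}{2}+\tfrac{1-\alpha}{\tilde{r}_0}.
\]
For fixed $r>6$, sending $r_0\to+\infty$ (and $\tilde{r}_0\to 1^+$, which is admissible since $\tfrac{2r_0}{2r_0-3}\to 1^+$) forces $\alpha\to 6/r$ and $\tfrac{1}{\tilde{r}}\to\tfrac{3}{r}+(1-\tfrac{6}{r})=1-\tfrac{3}{r}$, so every $\tilde{r}<\tfrac{r}{r-3}$ is reachable. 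The equality case $r=6$, $\tilde{r}=2$ corresponds to $\alpha=1$, i.e.\ the pure Sobolev bound.

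The delicate point is that the exponent $\tfrac{2r_0}{2r_0-3}$ produced by the direct Duhamel-HLS estimate cannot be improved by redistributing the interpolation weights inside the Bihari-LaSalle machinery of Lemma \ref{lemma_gronwall}: a short verification shows that any admissible three-way interpolation of $\|u\|_{L^{2q}}^2$ between $L^2$, $L^6$ and $L^r$ yields the same endpoint exponent. The genuine scaling range $\tilde{r}<\tfrac{r}{r-3}$ is thus recovered only via the outer Bochner interpolation against $L^2_T L^6$ and the freedom to let $r_0\to+\infty$ with $\tilde{r}_0\to 1^+$; along the way the $L^1$-endpoint of Hardy-Littlewood-Sobolev must be treated carefully, either via the explicit $\tilde{r}=1$ extension displayed after \eqref{ineq_Hardy_Littlewood_Sobolev} or by choosing an interior $q<3$ so that $\|u\|_{L^{2q}}^2\in L^{\tilde{q}}_T$ with $\tilde{q}>1$ via the full Leray interpolation scale.
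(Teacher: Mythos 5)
Your proof is correct, and it reaches the stated range $\frac{1}{\tilde r}+\frac 3r>1$ for $r>6$ (with equality at $r=6$) by a genuinely different route from the paper. The paper interpolates $\|u(s,\cdot)\|_{L^{2q}}$ between $L^6$ and the \emph{target} space $L^r$, which produces a self-referential term $\|u(s,\cdot)\|_{L^r}^{2(1-\alpha)}$ under the time integral; this forces the sublinearity condition $2(1-\alpha)<1$ (hence $p>\frac 65$), an application of the rearrangement-based Bihari--LaSalle Lemma \ref{lemma_gronwall} and Corollary \ref{corol_gronwall}, and finally an optimisation of $F_1(r,p)$ over $p<\frac 32$ to reach $\tilde r<F_1(r,\frac 32)=\frac{r}{r-3}$. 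You instead take $q=3$ so that the quadratic term is controlled purely by the known energy quantity $\|u(s,\cdot)\|_{L^6}^2\in L^1_T$, obtain $u\in L^{\tilde r_0}_TL^{r_0}$ for $\tilde r_0<\frac{2r_0}{2r_0-3}$ by Young's inequality in time, and then recover the full scaling window by the outer Bochner interpolation of Section \ref{sec_Interpol_Lebesgue} against $L^2_TL^6$ with $r_0\to+\infty$; this avoids the Bihari--LaSalle machinery entirely and is more elementary, at the cost of needing the auxiliary family of exponents and a short check that the linear terms $\tilde P u_0$ and $\tilde G\,\mathbb{P}f$ remain controlled in $L^{\tilde r_0}_TL^{r_0}$ for large $r_0$ (heat-kernel smoothing from $L^2$ suffices since $\frac 32(\frac 12-\frac 1{r_0})<1$ and $\tilde r_0$ is close to $1$, so you are not actually relying on extra hypotheses on $u_0$ there). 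One side remark of yours is inaccurate, though it does not affect your argument: the claim that the range $\tilde r<\frac{r}{r-3}$ is recovered \emph{only} via the outer interpolation contradicts the paper's own computation, where the Bihari--LaSalle route with the $L^6$--$L^r$ interpolation and $p\uparrow\frac 32$ reaches exactly $\tilde r<F_1(r,\frac 32)=\frac{r}{r-3}$ directly at the target exponent.
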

%
\begin{proof}[Proof of Lemma \ref{Lemme_u_L_r_Lr}]
	By the usual Duhamel formula around the heat equation, we get, for any $(t,x) \in [0,T] \times \R^3$
\begin{equation*}
	u(t,x) = \tilde P u_0 (t,x)+ \tilde G \P f(t,x) - \tilde G \P (u\cdot \nabla u)(t,x),
\end{equation*}
see definitions \eqref{def_tilde_G_tilde_P}.

Next, by  Minkowski inequality, for any $r \geq 1$
\begin{equation}\label{ineq_Lr_u}
	\|u(t,\cdot)\|_{L^r} \leq  \|u_0 \|_{L^r}+ \int_0^t \|\P  f(s,\cdot)\|_{L^r}ds +\| \nabla \tilde G \P( u^{\otimes 2} )(t,\cdot)\|_{L^r}.
\end{equation}
The last term in the r.h.s comes from the incompressible property of $u$ \eqref{div(uDu)} (i.e. $u\cdot \nabla u = \nabla \cdot (u^{\otimes 2 })$), and by the convolution property of the heat semi-group.

From Young inequality for the convolution, with $1 \leq p,q\leq + \infty$, such that
\begin{equation}\label{def_r_p_q}
	1+ \frac 1r = \frac 1p + \frac 1q,
\end{equation}
and from Minkowski inequality, we derive 
\begin{eqnarray}\label{ineq_D_TildeG_Lr_1}
	\| \nabla \tilde G \P( u\otimes u )(t,\cdot)\|_{L^r} 
	&\leq& C
	\int_0^t \Big ( \int_{\R^3} \Big ( \int_{\R^3} \nabla \tilde p(s,t,x,y) \P  u^{\otimes 2}(s,y) dy\Big )^r dx \Big )^{\frac 1r} ds
	\nonumber \\
		&\leq&C
	\int_0^t \|\nabla \tilde p(s,t,\cdot,0)\|_{L^p} \|u^{\otimes 2}(s,\cdot)\|_{L^q} ds
	\nonumber \\
	&\leq&
C	\int_0^t [\nu (t-s)]^{-\frac 12- \frac{3(p-1)}{2p}} \|u(s,\cdot)\|_{L^{2q}}^2 ds.
\end{eqnarray}
The Leray operator disappears in the second inequality thanks to a 
Caldern-Zygmund estimate, for any $r \in (1,+ \infty)$.

Importantly, we can already remark that the term $(t-s)^{-\frac 12- \frac{3(p-1)}{2p}} $ is integrable if 
\begin{equation}\label{containte_p_u}
	p< \frac 32.
\end{equation}

Recalling that, from Gagliardo-Nirenberg inequality, 
\begin{equation}\label{ineq_uL6}
	\int_0^T \|u(s,\cdot)\|_{L^6}^2 ds \leq  C	\int_0^T \|\nabla u(s,\cdot)\|_{L^2}^2 ds<+ \infty
	.
\end{equation}

	From interpolation in Lebesgue space detailed in Section \ref{sec_Interpol_Lebesgue}, for 
	 $\|u(s,\cdot)\|_{L^{2 r'}L^{2q}}$ between $\|u(s,\cdot)\|_{L^{2}L^{6}}$ and $\|u(s,\cdot)\|_{L^{r'}L^{r}}$ 
	\begin{equation*}
		\|u(s,\cdot)\|_{L^{2q}} \leq 	\|u(s,\cdot)\|_{L^{6}}^\alpha 	\|u(s,\cdot)\|_{L^{r}} ^{1-\alpha},
	\end{equation*}
	with, from \eqref{Def_Alpha},
	\begin{equation}
		\alpha 	= \frac{3(r-2q)}{q(r-6)} 
		, \ \ 1-\alpha 	= \frac{r(2q-6)}{2q(r-6)}.
	\end{equation}
Injecting this interpolation inequality into \eqref{ineq_D_TildeG_Lr_1} yields
\begin{eqnarray}\label{ineq_nabla_G_u2}
	\| \nabla \tilde G \P( u^{\otimes 2} )\|_{ L^r} 	
	&\leq& 
	C	\int_0^t [\nu (t-s)]^{-\frac 12- \frac{3(p-1)}{2p}}
	\|u(s,\cdot)\|_{L^{r}}^{2(1-\alpha) } 	\|u(s,\cdot)\|_{L^{6}} ^{2\alpha} ds.
\end{eqnarray}
With this inequality, we are tempted to use a kind of BihariÐLaSalle 
 inequality.
We need to state a new inequality adapted to the current situation stated in Lemma \ref{lemma_gronwall}.

To use this lemma, 
we have to suppose that $\| u (s,\cdot)\|_{L^r}, \|u(s,\cdot)\|_{L^6}$ are continuous in $s$, which is guaranteed in the mollified equation case \eqref{Navier_Stokes_equation_v2_moll} stated further.
In the future arguments based on Lemma \ref{lemma_gronwall}, we keep using the same argument by regularisation.


In order to check the conditions, required for the use of this Bihari-LaSalle 
 type lemma, we have to separate different possibilities: 
\\

If $r>6$,
%
  in order to use 
  Lemma \ref{lemma_gronwall}, the contribution of $\|u(s,\cdot)\|_{L^r}$ has to satisfy
\begin{equation*}
	2(1-\alpha)=2\frac{r(q-3)}{q(r-6)} < 1,
	\Leftrightarrow \
			 \frac 1r< \frac 1q- \frac 16.
\end{equation*}
Hence from \eqref{def_r_p_q}, we have the condition
\begin{equation*}
	\frac{1}{r}< (1+\frac{1}{r}-\frac 1p)-\frac 16
	\Leftrightarrow
	p >  \frac 65.
\end{equation*}
If $r<6$, again from \eqref{def_r_p_q}, we have to suppose $	p <  \frac 65$.
\\

We can combine \eqref{ineq_Lr_u} and \eqref{ineq_nabla_G_u2} 
\begin{equation*}
		\|u(t,\cdot)\|_{L^r} \leq  \|u_0 \|_{L^r}+ \int_0^t \|\P  f(s,\cdot)\|_{L^r} 
		+	C	\int_0^t [\nu (t-s)]^{-\frac 12- \frac{3(p-1)}{2p}}
		\|u(s,\cdot)\|_{L^{r}}^{2(1-\alpha) } 	\|u(s,\cdot)\|_{L^{6}} ^{2\alpha} ds,
\end{equation*}
and from Lemma \ref{lemma_gronwall}, 
\begin{equation*}
	\|u(t,\cdot)\|_{L^r}^* \leq \bigg ( \Big ( \|u_0 \|_{L^r}+ \int_0^t \|\P  f(s,\cdot)\|_{L^r} 
	+	C	\int_0^t [\nu (t-s)]^{-\frac 12- \frac{3(p-1)}{2p}}
		\|u(s,\cdot)\|_{L^{6}} ^{2\alpha} ds \Big )^{\frac{1}{1-2(1-\alpha)}} \bigg )^*.
\end{equation*}

We can freely remove the non-decreasing rearrangement symbol ${}^*$  by equality of the Lebesgue norm \eqref{identity_Lp_rearragement}.
To permute the time Lebesgue norm, we use Hardy-Littlewood-Sobolev inequality \eqref{ineq_Hardy_Littlewood_Sobolev}.


That is to say, we derive from Corollary \ref{corol_gronwall}, for $\tilde r > 2\alpha-1$,
\begin{eqnarray*}
\|u\|_{L^{\tilde r}_T L^r}
	&\leq&
	C T^{\frac 1{\tilde r}}\|u_0 \|_{L^r}+ C 	T^{\frac 1{\tilde r}} \int_0^T \|\P  f(s,\cdot)\|_{L^r} ds+
	C \nu^{-2+ \frac{3}{2p}}	 \|u\|_{L^{2\alpha r'}L^{6}}^{2\alpha},
\end{eqnarray*}
with $\frac{1}{ r'} + \frac 12+ \frac{3(p-1)}{2p}= 1 + \frac{1-2(1-\alpha)}{\tilde r}$
$\Leftrightarrow$ 
 $\frac{1}{ r'} - \frac{3}{2p}= -1 + 
\frac{\frac{6r-6q-qr}{q(r-6)} }{\tilde r}$. 
\\

And we have to impose that $2\alpha r'=2$ (needed in \eqref{ineq_D_NS_Leray}), namely $r'= \frac{1}{\alpha}=\frac{q(r-6)}{3(r-2q)}$, hence
\begin{equation*}
\frac{3(r-2q)}{q(r-6)} - \frac{3}{2p}= -1 + 
	\frac{\frac{6r-6q-qr}{q(r-6)} }{\tilde r},
\end{equation*}
and by \eqref{def_r_p_q}, we obtain
\begin{equation*}
	3r(1+\frac{1}{r}-\frac{1}{p})- 6	 - \frac{3(r-6)}{2p}
	=-(r-6) + 
	\frac{6r(1+\frac{1}{r}-\frac{1}{p}) - (6+r) }{\tilde r}.
\end{equation*}
This is 
equivalent, for 
$p \neq \frac{3(3r-6)}{2(4r-9)}= \frac{9(r-2)}{2(4r-9)}$, to
\begin{equation*}
\tilde r = \frac{2r(5p-6)}{8rp-9r-18p+18}=:F_1(r,p).
\end{equation*}
Differentiate this function by $p$ gives
\begin{equation*}
	\partial_p F_1(r,p) = \frac{3(r-6)r}{(8pr-9r-18p+18)^2} .
\end{equation*}
The sign of the above derivative then depends on the position of $r$ relative to $6$.
\\

$\bullet$ If $r<6$, then\footnote{For $p=1$, we indeed have $1 \neq \frac{9(r-2)}{2(4r-9)} \Leftrightarrow r \neq 0$.}
\begin{equation*}
\tilde r \leq  F_1(r,1)=2,
\end{equation*}
which is weaker than the result obtained by the direct interpolation between $L^\infty([0,T],L^2(\R^3,\R^3))$ and $ L^2([0,T],H^1(\R^3,\R^3))$:
 \begin{equation*}
 	\|u(t,\cdot)\|_{L^r} \leq  	\|u(t,\cdot)\|_{L^2}^{\alpha} 	\|u(t,\cdot)\|_{L^6}^{1-\alpha},
 \end{equation*}
with $\frac 1r = \frac{\alpha}{2}+ \frac{1-\alpha}{6} \Leftrightarrow \alpha = \frac{6-r}{2r}$.
So $ 	\|u(t,\cdot)\|_{L^r} $ lies in the Lebesgue space $L^{\tilde r}_T$ with $\tilde r = \frac{2}{1-\alpha}=  \frac{4r}{3(r-2)}$, which equivalently write
\begin{equation*}
	\frac{4}{\tilde r}+ \frac{6}{r}=3.
\end{equation*}
 
 $\bullet$ If $r=6$, then, for any $p \geq 1$,
 \begin{equation*}
 \tilde r =  F_1(6,p)=2,
 \end{equation*}
which matches with the existing result \eqref{ineq_D_NS_Leray}.

$\bullet$ If $r>6$,  from \eqref{containte_p_u}, $\frac{6}{5} < p <\frac 32$, then
\begin{equation*}
\tilde r <  F_1(r,\frac 32)=\frac{r}{r-3},
\end{equation*}
namely
\begin{equation*}
\frac{1}{\tilde r}+ \frac{3}{r}>1.
\end{equation*}
For the limit case $r=6$, the above identity is weaker than in the known $L^2_TL^6$ control, still $r>6$ is out of this usual scope. Lemma \ref{Lemma_u_LrLr_bis} below explains how to turn the above inequality into an equality. 
\end{proof}

\subsection{Case $k=1$}

To perform the same analysis 
with an extra derivative, we develop an other representation of $u$ based on a \textit{proxy} 
taking account of the transport part of equation \eqref{Navier_Stokes_equation_v2} \textit{via} the associated flow of $u$.

To  give a meaning of the flow, we first mollify Navier-Stokes equation \eqref{Navier_Stokes_equation_v2}, for each $ n \in \N$,
\begin{equation}
	\label{Navier_Stokes_equation_v2_moll}
	\begin{cases}
		\partial_t  u^n(t,x)+ \mathbb P[ u_n^n   \cdot  \nabla   u^n](t,x)
		= \nu \Delta u^n(t,x)+ \mathbb P  f(t,x) 
		,\\
		u^n(0,x)= u_0(x), \, x \in \R^3 ,
	\end{cases}
\end{equation}
where, for a given $m \in \N$, $u_n^m$ stands for a mollification of $u_n$ such that $u_n^m \in C^\infty_b([0,T]\times\R^3,\R^3)$ and by Leray \cite{lera:34} we know that $u_n^n$ converges towards $u$ solution to \eqref{Navier_Stokes_equation_v2} in $L^\infty([0,T], L^2(\R^3,\R^3)) \cap L^2([0,T], L^2(\R^3,\R^3))$.
\\

For the sake of simplicity, we omit the index $n$ in the following analysis.
All the considered upper-bounds in the current article do not depend on $n$, which \textit{in fine} allows to pass to the limit as $n$ tends to infinity.

\begin{lemma}\label{Lemma_omega_LrLr}	 For all $T>0$,  $(r,\tilde r) \in (1,+ \infty)^2$, we obtain $	\|\nabla u \|_{L^{\tilde r}_T L^r} 
	<+ \infty,$ 
	if
	\begin{equation}
		\begin{cases}
			\tilde r=\frac{r(r-4)}{4r^2-13r+6}, \ \text{if } r \in [3,6],
		\\
			\frac{1}{\tilde r} + \frac{3}{r}=2, \ \text{if } r \in [2,3].
			\end{cases}
	\end{equation} 
\end{lemma}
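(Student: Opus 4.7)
The plan is to adapt the proof of Lemma \ref{Lemme_u_L_r_Lr} by applying $\nabla$ to the Duhamel representation. Working on the mollified system \eqref{Navier_Stokes_equation_v2_moll}, and using the incompressibility identity \eqref{div(uDu)} to push one derivative onto the heat kernel rather than onto the square $u^{\otimes 2}$, I would start from
\begin{equation*}
\nabla u(t,x) = \nabla \tilde P u_0(t,x) + \nabla \tilde G \mathbb{P} f(t,x) - \nabla \tilde G \mathbb{P}(u\cdot \nabla u)(t,x).
\end{equation*}
Combining Minkowski, Young's convolution inequality (with $1+1/r=1/p+1/q$), the Calder\'on-Zygmund $L^r$-boundedness of $\mathbb{P}$ and the kernel bound \eqref{ineq_h_nu_Lp}, the nonlinear piece is controlled by $C\int_0^t [\nu(t-s)]^{-1/2 - 3(p-1)/(2p)} \|u \cdot \nabla u(s,\cdot)\|_{L^q}ds$. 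Integrability of the singular factor still forces $p < 3/2$, exactly as in the case $k=0$.

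Next, I would apply H\"older to the product $\|u \cdot \nabla u\|_{L^q}\leq \|u\|_{L^{q_1}}\|\nabla u\|_{L^{q_2}}$ with $1/q = 1/q_1 + 1/q_2$. The factor $\|u\|_{L^{q_1}}$ is interpolated between $L^2$ (controlled via \eqref{ineq_NS_Leray}) and $L^6$ (Gagliardo-Nirenberg, using \eqref{ineq_uL6}), while $\|\nabla u\|_{L^{q_2}}$ is interpolated between $L^2$ (controlled via \eqref{ineq_D_NS_Leray}) and $L^r$, using \eqref{ineq_interpol_Lebesgue}-\eqref{Def_Alpha}. Choosing $q_2 < r$ makes the resulting power $1-\alpha$ of $\|\nabla u\|_{L^r}$ strictly less than $1$, as required for the Bihari-LaSalle argument. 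The inequality then takes the form
\begin{equation*}
\|\nabla u(t,\cdot)\|_{L^r} \leq a(t) + \int_0^t [\nu(t-s)]^{-\frac{1}{2}-\frac{3(p-1)}{2p}} \psi(s) \|\nabla u(s,\cdot)\|_{L^r}^{1-\alpha} ds,
\end{equation*}
where $a(t)$ collects the data contributions and $\psi(s)$ is a product of already-controlled norms of $u$ and $\nabla u$ lying in an explicit time Lebesgue space.

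Lemma \ref{lemma_gronwall} then inverts this non-linear Volterra inequality pointwise in time (through the non-decreasing rearrangement), and Corollary \ref{corol_gronwall}, i.e.\ the Hardy-Littlewood-Sobolev inequality \eqref{ineq_Hardy_Littlewood_Sobolev}, upgrades the pointwise bound to an $L^{\tilde r}_T$ one. The relation between $\tilde r$, $r$ and the free parameter $p$ is then read off by matching Hardy-Littlewood-Sobolev exponents, and optimising $p\in(1,3/2)$ under the compatibility constraints produces the two regimes in the statement: for $r\in[2,3]$ a boundary value of $p$ is sharp, giving the Prodi-Serrin-type line $1/\tilde r + 3/r = 2$, while for $r\in[3,6]$ the extremum is attained in the interior of $(1,3/2)$, giving the rational formula $\tilde r = r(r-4)/(4r^2-13r+6)$.

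The main obstacle will be the simultaneous bookkeeping of the two nested interpolations: ensuring $1-\alpha<1$ for Bihari-LaSalle, ensuring that $\psi$ belongs to the correct time Lebesgue space so that Hardy-Littlewood-Sobolev applies, and verifying that the resulting admissible $(\tilde r, r)$ precisely fill the two ranges claimed in the lemma. As in Lemma \ref{Lemme_u_L_r_Lr}, all bounds obtained on \eqref{Navier_Stokes_equation_v2_moll} are uniform in the mollification parameter $n$, so one can pass to the limit to recover the claim for the genuine Leray solution $u$.
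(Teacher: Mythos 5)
Your proposal is not the paper's argument, and as written it cannot reach the exponents in the statement. The structural problem is the treatment of the nonlinearity. For $\nabla u$ you cannot integrate by parts (writing $u\cdot\nabla u=\nabla\cdot u^{\otimes 2}$ would put two derivatives on the kernel and give the non-integrable singularity $(t-s)^{-1-\frac{3(p-1)}{2p}}$), so, as you note, you must keep $\|u\cdot\nabla u(s,\cdot)\|_{L^q}$ and split it by H\"older as $\|u\|_{L^{q_1}}\|\nabla u\|_{L^{q_2}}$. This costs one full spatial derivative of $u$ at every time $s$, and the exponent bookkeeping then does not close to the claimed relations. Concretely, with $\frac1{q_1}=\frac{\theta_1}{2}+\frac{1-\theta_1}{6}$ and $\frac1{q_2}=\frac{\theta_2}{2}+\frac{1-\theta_2}{r}$, the kernel exponent is $-1+\gamma$ with $\gamma=\frac{3}{2p}-1$ and $\frac1p=\frac56-\frac{\theta_1}{3}-\theta_2\bigl(\frac12-\frac1r\bigr)$, and matching the Hardy--Littlewood--Sobolev exponents against the only available controls $\|u\|_{L^2_TL^6}$, $\|\nabla u\|_{L^2_TL^2}$, $\|u\|_{L^\infty_TL^2}$ yields $\frac1{\tilde r}=\frac1{4\theta_2}+\frac54-\frac{3}{2r}$, independently of $\theta_1$. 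This is minimized at $\theta_2=1$ (i.e.\ with no bootstrap at all), giving only $\frac{2}{\tilde r}+\frac3r=3$ for $r<3$ --- for instance $\tilde r=\frac43$ at $r=2$, weaker even than Leray's $L^2_TL^2$ --- while at $r=3$ the singularity degenerates to exactly $(t-s)^{-1}$. The endpoint $\nabla u\in L^1_TL^3$, which is what the paper actually proves and then interpolates with \eqref{ineq_D_NS_Leray} to obtain $\frac1{\tilde r}+\frac3r=2$ on $[2,3]$, is therefore out of reach, and the same obstruction blocks the formula $\tilde r=\frac{r(r-4)}{4r^2-13r+6}$ on $[3,6]$.

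The ideas you are missing are precisely what the paper introduces for $k\geq1$: (i) pass to the vorticity $\omega=\nabla\times u$, so that the pressure term $\Xi(u\cdot\nabla u)$ is annihilated by the curl rather than merely bounded by Calder\'on--Zygmund; (ii) use the frozen-flow parametrix $\hat p^{\tau,\xi}$ of \eqref{def_hat_p}, so that after integration by parts and the choice $(\tau,\xi)=(t,x)$ the quadratic term carries the increment $[u(s,\theta_{s,\tau}(\xi))-u(s,y)]^{\otimes2}$ instead of $u^{\otimes2}$; and (iii) convert that increment into the Sobolev--Slobodeckij seminorm $[u(s,\cdot)]_{W^{\gamma,2r}}^2$ by absorbing $|x-y|^{3+2r\gamma}$ into the Gaussian, which improves the singularity from $(t-s)^{-1}$ to $(t-s)^{-1+\gamma}$ at the price of only a fractional derivative $\gamma<1$ of $u$. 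It is this fractional trade-off, followed by the Brezis--Mironescu interpolation of $\|u(s,\cdot)\|_{W^{\gamma,2r}}$ (between $L^6$ and $W^{1,r}$ for $r\leq3$, and between $L^\infty$ and $W^{1,2}$ combined with Gagliardo--Nirenberg for $r\in(3,6]$), that simultaneously produces an exponent of $\|\nabla u\|_{L^r}$ strictly below one for Lemma \ref{lemma_gronwall} and the stated links between $\tilde r$ and $r$. Your outer scaffolding (mollification, Lemma \ref{lemma_gronwall}, Corollary \ref{corol_gronwall}, passage to the limit) is the right one, but without the increment/fractional-seminorm mechanism the derivative budget does not balance.
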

Unexpected, we also see that the time integrable $			\tilde r=\frac{r(r-4)}{4r^2-13r+6}$ is non-decreasing for $4<r\leq 6$.
\\

As a consequence of the above result, we derive the equality case for $k=0$ by Gagliardo-Nirenberg.

\begin{lemma}\label{Lemma_u_LrLr_bis}	 For all $T>0$,  $(r,\tilde r) \in (1,+ \infty)^2$, we have $\|u\|_{L^{\tilde r}_T L^r}<+ \infty$,
	if
	\begin{equation}
			\frac{1}{\tilde r} + \frac{3}{r}=1, \ \text{if } r \in [6,+\infty].
	\end{equation} 
\end{lemma}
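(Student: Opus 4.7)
The plan is to reduce the $L^{\tilde r}_T L^r$ control on $u$ to the $L^{\tilde r}_T L^p$ control on $\nabla u$ already obtained in Lemma \ref{Lemma_omega_LrLr}, using the critical Sobolev embedding in dimension three. For $r \in [6,+\infty)$, set $p := \frac{3r}{r+3}$, so that $\frac{1}{p} = \frac{1}{r} + \frac{1}{3}$ and $p \in [2,3)$. The embedding $\dot{W}^{1,p}(\R^3) \hookrightarrow L^r(\R^3)$, which is sharp in dimension three, yields the pointwise-in-time estimate
\begin{equation*}
\|u(t,\cdot)\|_{L^r} \leq C \|\nabla u(t,\cdot)\|_{L^p}.
\end{equation*}
Integrating the $\tilde r$-th power in time gives $\|u\|_{L^{\tilde r}_T L^r} \leq C \|\nabla u\|_{L^{\tilde r}_T L^p}$.

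Since $p$ falls into $[2,3]$, the first branch of Lemma \ref{Lemma_omega_LrLr} applies and provides $\|\nabla u\|_{L^{\tilde r}_T L^p} < +\infty$ as soon as $\frac{1}{\tilde r} + \frac{3}{p} = 2$. Substituting $\frac{3}{p} = 1 + \frac{3}{r}$, this condition rewrites as
\begin{equation*}
\frac{1}{\tilde r} + \frac{3}{r} = 1,
\end{equation*}
which is exactly the scaling announced in the statement. Combining the two displays closes the proof for every finite $r \geq 6$; the boundary exponent $r=6$ recovers the classical $L^2_T L^6$ bound consistent with the Leray energy estimate.

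The one genuinely delicate point is the limiting case $r = +\infty$ (forcing $\tilde r = 1$), which is not reachable through the above argument since $\dot{W}^{1,3}(\R^3)$ does not embed into $L^\infty(\R^3)$. This is the main obstacle, and the critical Sobolev embedding strategy has to be supplemented: the case coincides with the classical bound $u \in L^1([0,T], L^\infty(\R^3,\R^3))$ due to \cite{tarta:78} already recalled just below Theorem \ref{THEO_2}, and we close the proof by directly invoking it rather than re-proving it by interpolation.
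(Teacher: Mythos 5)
Your proof is correct and follows essentially the same route as the paper: the Sobolev/Gagliardo--Nirenberg embedding $\|u(t,\cdot)\|_{L^r}\leq C\|\nabla u(t,\cdot)\|_{L^p}$ with $\frac1p=\frac1r+\frac13$ reduces the claim to the branch $\frac{1}{\tilde r}+\frac{3}{p}=2$, $p\in[2,3]$, of Lemma \ref{Lemma_omega_LrLr}. Your explicit treatment of the endpoint $r=+\infty$ via \cite{tarta:78} is a small but welcome addition that the paper's proof leaves implicit.
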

\begin{proof}[Proof of Lemma \ref{Lemma_u_LrLr_bis}]
From Gagliardo-Nirenberg inequality, we have for any $r'\geq 1$, 
\begin{equation*}
	\|u \|_{L^{ \tilde r}_T L^{r'}} \leq C \|\nabla u \|_{L^{\tilde r}_T L^r},
\end{equation*}
with 
\begin{equation*}
	\frac{1}{ r'}= \frac{1}{r}-\frac{1}{3}.
\end{equation*}
Then the result of Lemma \ref{Lemma_omega_LrLr} for $r \in [2,3]$ becomes
\begin{equation*}
				\frac{1}{\tilde r} + \frac{3}{r'}=1, \ \text{for } r' \in [6,+\infty].
\end{equation*}
\end{proof}

The following subsections are dedicated to the proof of Lemma \ref{Lemma_omega_LrLr}.

	\subsubsection{Proxy}


For any \textit{freezing} point  $(\tau,\xi) \in [0,T] \times  \R^3$, we define the flow associated to $u$ by 
\begin{equation}\label{def_theta}
	\theta_{s,\tau} (x):= x+ \int_s^\tau     u(\tilde s,\theta_{\tilde s,\tau}(x)) d \tilde s, \ s \in [0,\tau] ,
\end{equation}
which write equivalently, for any $t \in [0,\tau]$, by
\begin{equation*}
	\dot \theta_{t,\tau} (\xi)=-    u(t,\theta_{t,\tau}(\xi)) , \ 	 \theta_{\tau ,\tau} (\xi) =\xi .
\end{equation*}
Let us carefully precise that there is \textit{a priori} no unique solution to ODE \eqref{def_theta}, as no Cauchy-Lipschitz and even no Cauchy-Peano theorem can be straightly  applied here.
However, we can rewrite equation \eqref{def_theta} with a mollified\footnote{Of course we could suppose that the final time $T>0$ is small enough to get a solution $u$ smooth by \cite{osee:11} \cite{osee:12}.} version of $u$, as done in \cite{lera:34}. From Section IV.4 in \cite{brez:99}, we know that the mollified $u$ converges forward to $u$ in $L^\infty_TL^2$.

For the sake of simplicity, we get ride of this regularisation index which is only useful in the definition of the flow $\theta_{ s,t }$. The remaining of the analysis does not depend on other estimates than those provided in \eqref{ineq_NS_Leray} and \eqref{ineq_D_NS_Leray}. 
The limit of the mollification is direct after the following computations.
\\

Thanks to this notation, we can rewrite equation \eqref{Navier_Stokes_equation_v2} by
\begin{equation}
	\label{KOLMOLLI_xi}
	\partial_t  u (t,x)
	+    u  (t,\theta_{t,\tau} (\xi))\cdot  \nabla   u  (t,x) -   \nu \Delta   u  (t,x)
	=
	u_{\Delta} [\tau,\xi] 
	\cdot  \nabla   u   (t,x) +\Xi [ u  \cdot  \nabla   u   ](t,x)
	+ \P   f(t,x),
\end{equation}
with
\begin{equation}\label{def_u_Delta}
	u_{\Delta} [\tau,\xi](t,x) := 	   u(t,\theta_{t,\tau} (\xi))  - 	   u (t,x). 
\end{equation}
We can deduce that the fundamental solution associated with the l.h.s. of
\eqref{KOLMOLLI_xi} is the probability density,
\begin{equation}\label{def_hat_p}
	\hat{p}^{\tau,\xi}  (s,t,x,y)
	:= \frac{1}{(4\pi \nu (t-s))^{\frac 3 2} } 
	\exp \bigg ( -\frac {\left |x+ \int_s^t     u (\tilde s,\theta_{\tilde s,\tau }(\xi))d \tilde s-y \right|^2}{4\nu(t-s)} \bigg ). 
\end{equation}
From definition \eqref{def_theta}, if $\xi =x$, we identify
\begin{equation*}
	\hat p^{t,x}  (s,t,x,y)
	= \frac{1}{(4\pi \nu (t-s))^{\frac 3 2} } 
	\exp \bigg ( -\frac { |\theta_{s,t } (x)-y |^2}{4\nu(t-s)} \bigg ). 
\end{equation*}
For each $\alpha\in \N^3$, there exists a constant $C_{\alpha}>1$ such that, for each 
derivative of order $\alpha$,
\begin{eqnarray}\label{FIRST_deriv_CTR_DENS_flot}
	|D^\alpha \hat  p^{\tau,\xi}(s,t,x,y)| 
	&\leq&  
	\frac{C_{\alpha}
		[ \nu (s-t)] ^{-\frac {|\alpha|}2}}{(4\pi \nu (t-s))^{\frac 3 2} } 
	\exp \Big ( - C_{\alpha}^{-1}\frac {\big  |x+ \int_s^t     u (\tilde s,\theta_{\tilde s,\tau }(\xi))d \tilde s-y \big |^2}{4\nu(t-s)} \Big )
	\nonumber \\
	&=:& C [ \nu (s-t)]^{-\frac {|\alpha|}2}\bar p^{\tau,\xi}  (s,t,x,y ).
\end{eqnarray}
Similarly,
for any $\gamma \in [0,1]$,
\begin{eqnarray}\label{FIRST_deriv_CTR_DENS_flot_absorb}
	&&	|D^\alpha \hat  p^{\tau,\xi}(s,t,x,y)|  \times  \big |y-x-\int_s^t    u (\tilde s,\theta_{\tilde s,\tau } (\xi))d \tilde s  \big |^\gamma \Big |_{(\tau,\xi)=(t,x)}
	\nonumber \\
	&=&
	|D^\alpha \hat  p^{\tau,\xi}(s,t,x,y)| \big |_{(\tau,\xi)=(t,x)} \times  \big |y-\theta_{ s,t } (x) \big |^\gamma
	\nonumber \\
	&\leq&  C  [\nu (s-t)] ^{-\frac {|\alpha|}2+ \frac{\gamma}{2}} \bar p^{t,x} (s,t,x,y ).
\end{eqnarray}
Also, for any $0\leq s<t$, that $\hat  p^{\tau,\xi}(s,t,x,y)$ is the fundamental solution of
\begin{equation*}
	\partial_t \hat  p^{\tau,\xi}(s,t,x,y) = \nu \Delta \hat  p^{\tau,\xi}(s,t,x,y) - \langle    u (t,\theta_{t,\tau } (\xi)), \nabla \hat  p^{\tau,\xi}(s,t,x,y) \rangle.
\end{equation*}
We can readily see the link with the classical heat kernel:
\begin{equation}\label{def_tilde_p}
	\tilde {p}  (s,t,x,y):= 	\hat{p}^{\tau,\xi} \Big  (s,t,x- \int_s^t     u (\tilde s,\theta_{\tilde s,\tau } (\xi))d \tilde s,y \Big )
	= \frac{1}{(4\pi \nu (t-s))^{\frac 3 2} } 
	\exp \bigg ( -\frac {\left |x-y \right|^2}{4\nu(t-s)} \bigg ).
\end{equation}
We then write, the Duhamel formula corresponding to \eqref{KOLMOLLI_xi},
\begin{equation}\label{Duhamel_u_FINAL}
	u    (t,x) 
	=  \hat P^{\tau,\xi}  u_{0}   (t,x)
	+  \hat G^{\tau,\xi} \P f(t,x) 
	+ \hat  G^{\tau,\xi}  \big ( u _{\Delta}  [\tau,\xi]  \cdot  \nabla    u    \big )(t,x)
	+ \hat  G^{\tau,\xi}  \Xi[    u      \cdot  \nabla   u   ](t,x)	
	,
\end{equation}
with, for all $0  \leq t \leq T$ and $x \in \R^3$,
\begin{equation}\label{def_hat_G}
	\hat  G^{\tau,\xi}   f(t,x):= \int_0^{t}  \int_{\R^{3}}  \hat {p}^{\tau,\xi} (s,t, x,y)   f (s,y) \ dy \ ds,
\end{equation}
and the semi-group is defined by
\begin{equation}\label{def_hat_P}
	\hat  P^{\tau,\xi}    g(t,x):=\int_{\R^{3}} \hat  p^{\tau,\xi}  (0,t,x,y)   g(y) \ dy.
\end{equation}
It is crucial to observe that $u$ does not depend on the \textit{freezing} point $(\tau,\xi)$, which allows to pick the most suitable.

With these notations, we can also establish a new Duhamel formula for the vorticity defined by
\begin{equation}\label{def_omega}
	\omega := \nabla \times u.
\end{equation}

\subsubsection{Duhamel formula of $\omega= \nabla \times u$}

From the representation \eqref{Duhamel_u_FINAL}, 
from definitions
   \eqref{def_u_Delta}, 
 \eqref{def_hat_G}, 
 and the incompressible property \eqref{div(uDu)}, 
 we can write by integration by parts,
\begin{eqnarray} \label{ident_hat_G_u_Delta_Du}
\hat  G^{\tau,\xi}  \big ( u _{\Delta}  [\tau,\xi]  \cdot  \nabla    u    \big )(t,x)	
	&=&
	\int_0^t \int_{\R^3}   \hat p^{\tau,\xi }(s,t,x,y) [u(s,\theta_{s,\tau}(\xi))-u(s,y)]\cdot\nabla u(s,y) dy \, ds
	\nonumber \\
	&=&
	\int_0^t \int_{\R^3} \nabla_y  \hat p^{\tau,\xi }(s,t,x,y)|_{(\tau,\xi)=(t,x)} [u(s,\theta_{s,\tau}(\xi))-u(s,y)]^{\otimes 2} dy \, ds.
	\nonumber \\
\end{eqnarray}
Next, we can take the curl operator in the Duhamel formula implying the representation of the vorticity
\begin{eqnarray*}
	\omega (t,x)&=&\nabla \times u(t,x)
	\nonumber \\
	& =& \nabla \times \hat P^{\tau,\xi} u_0 (t,x)+ \nabla \times\hat G ^{\tau,\xi} f(t,x) + \nabla \times \hat G ^{\tau,\xi} u_\Delta[\tau,\xi] \cdot \nabla u (t,x) + \nabla \times \hat G ^{\tau,\xi} \Xi (u\cdot\nabla u)(t,x).
\end{eqnarray*}
But from the convolution property,
\begin{equation*}
	\nabla \times \hat G ^{\tau,\xi} \Xi (u\cdot\nabla u)(t,x) =
	\int_0^t \int_{\R^3}   \hat p^{\tau,\xi }(s,t,0,y) \nabla_x \times \Xi (u\cdot\nabla u)(s,y+x) dy \, ds = 0,
\end{equation*}
recalling that $\nabla \times \Xi =-\nabla \times \nabla (-\Delta)^{-1} \nabla \cdot= 0$.

Therefore, by the property of the Green operator
\eqref{ident_hat_G_u_Delta_Du},
\begin{eqnarray}\label{Duhamel_omega_xi}
	\omega (t,x) &= &\hat P^{\tau,\xi} \nabla \times  u_0 (t,x)+ \hat G ^{\tau,\xi} \nabla \times  f(t,x) 
	\nonumber \\
	&&+
	\int_0^t \int_{\R^3}   \nabla_x \times \Big ( \nabla_y  \hat p^{\tau,\xi }(s,t,x,y)
	 [u(s,\theta_{s,\tau}(\xi))-u(s,y)]^{\otimes 2} \Big )dy \, ds.
\end{eqnarray}
We first remark that 
\begin{eqnarray}\label{ident_rot_p_u2}
&&   \nabla_x \times \Big ( \nabla_y  \hat p^{\tau,\xi }(s,t,x,y)
	[u(s,\theta_{s,\tau}(\xi))-u(s,y)]^{\otimes 2} \Big )
 \\
&	=&
	\left ( 
	\begin{matrix}
\partial_{x_2}\nabla_y  \hat p^{\tau,\xi }(s,t,x,y)
\big ([u(s,\theta_{s,\tau}(\xi))-u(s,y)]^{\otimes 2}\big )_{3}
-\partial_{x_3}\nabla_y  \hat p^{\tau,\xi }(s,t,x,y)
\big ([u(s,\theta_{s,\tau}(\xi))-u(s,y)]^{\otimes 2}\big )_{2}
\\
\partial_{x_3}\nabla_y  \hat p^{\tau,\xi }(s,t,x,y)
\big ([u(s,\theta_{s,\tau}(\xi))-u(s,y)]^{\otimes 2}\big )_{1}
-\partial_{x_1}\nabla_y  \hat p^{\tau,\xi }(s,t,x,y)
\big ([u(s,\theta_{s,\tau}(\xi))-u(s,y)]^{\otimes 2}\big )_{3}
\\
\partial_{x_1}\nabla_y  \hat p^{\tau,\xi }(s,t,x,y)
\big ([u(s,\theta_{s,\tau}(\xi))-u(s,y)]^{\otimes 2}\big )_{2}
-\partial_{x_2}\nabla_y  \hat p^{\tau,\xi }(s,t,x,y)
\big ([u(s,\theta_{s,\tau}(\xi))-u(s,y)]^{\otimes 2}\big )_{1}
	\end{matrix}
\right ) .
\nonumber
\end{eqnarray}
Next, after differentiating by the curl, we choose naturally the \textit{freezing} point $(\tau,\xi)=(t,x)$,
\begin{eqnarray}\label{ident_omega_final}
	\omega (t,x) &=& \hat P^{t,x} \nabla \times  u_0(t,x)+ \hat G ^{t,x} \nabla \times  f(t,x) 
	\nonumber \\
	&&+
	\int_0^t \int_{\R^3}  \nabla_x \times \Big ( \nabla_y  \hat p^{\tau,\xi }(s,t,x,y)
	[u(s,\theta_{s,\tau}(\xi))-u(s,y)]^{\otimes 2} \Big ) \Big |_{(\tau,\xi)=(t,x)}dy \, ds.
\end{eqnarray}
We insist on the fact that we do not differentiate with respect to  the variable $\xi$, given that we pick $\xi=x$ after differentiation and that neither $u$ nor $\omega$ depends on 
$(\tau,\xi)$.

\begin{remark}
	Another way to get identity \eqref{ident_omega_final} could be to directly handle with the vorticity equation
	\begin{equation}\label{eq_omega}
		\partial_t \omega(t,x)+  + u(t,\theta_{t,\tau}(\xi))\cdot \nabla \omega(t,x)
		=\nabla \times [u_\Delta[\tau,\xi] \cdot\nabla u](t,x) + \nu \Delta \omega(t,x)+ \nabla \times f(t,x).
	\end{equation}
To obtain \eqref{ident_omega_final}, it suffices to integrate by part the contribution $\nabla \times [u_\Delta[\tau,\xi] \cdot\nabla u](t,y) $, 
next to impose $(\tau,\xi)=(t,x)$.
\end{remark}

\subsubsection{Control in $L^{r}$ of $\nabla \times u$}

Finally, by property \eqref{ident_rot_p_u2}, we can readily derive
\begin{eqnarray}\label{ineq_1_w}
	|\omega (t,x)| &\leq & |\hat P^{t,x} \nabla \times  u_0(t,x)|+ |\hat G ^{t,x} \nabla \times  f(t,x)| 
	\nonumber \\
	&&
	+
	2 \int_0^t \int_{\R^3}  |\nabla_y^2  \hat p^{t,x }(s,t,x,y)|
	|u(s,\theta_{s,t}(x))-u(s,y)|^{2} dy \, ds
	\nonumber \\
	&\leq & |\hat P^{t,x} \nabla \times  u_0(t,x)|+ |\hat G ^{t,x} \nabla \times  f(t,x)| 
	\nonumber \\
	&&+
	C \int_0^t [\nu (t-s)]^{-1} \int_{\R^3}  \bar p^{t,x }(s,t,x,y)
	|u(s,\theta_{s,t}(x))-u(s,y)|^{2} dy \, ds.
\end{eqnarray}
By taking the norm $L^q$, $ q \geq 1$, the last term in the r.h.s. becomes by Minkowski inequality
\begin{eqnarray*}
	&&	\Big \| x \mapsto  \int_0^t [\nu (t-s)]^{-1} \int_{\R^3}  \bar p^{t,x }(s,t,x,y)
	|u(s,\theta_{s,t}(x))-u(s,y)|^{2} dy \, ds \Big \|_{L^r}
		\nonumber \\
	&\leq &
	C	\int_0^t  [\nu (t-s)]^{-1} \Big (\int_{\R^3} \Big ( \int_{\R^3}  \bar p^{t,x}(s,t,x,y) |u(s,\theta_{s,t}(x))-u(s,y)|^{2}  dy \Big )^r dx \Big )^{\frac 1r} ds
	\nonumber \\
	&\leq &
	C	\int_0^t  [\nu (t-s)]^{-1} \Big (  \int_{\R^3} \int_{\R^3}  \bar p^{t,x}(s,t,x,y) |u(s,\theta_{s,t}(x))-u(s,y)|^{2r}  dy \, dx \Big )^{\frac 1r} ds,
\end{eqnarray*}
by Jensen inequality (the function $\bar p^{t,x}(s,t,x,y)$ being a probability density).

By change of variable $x'= \theta_{s,t}(x)$, we also get
\begin{eqnarray}\label{change_variable}
	&&	\Big \| x \mapsto  \int_0^t [\nu (t-s)]^{-1} \int_{\R^3}  \bar p^{t,x }(s,t,x,y)
	|u(s,\theta_{s,t}(x))-u(s,y)|^{2} dy \, ds \Big \|_{L^r}
	\nonumber \\
	&\leq &
	C	\int_0^t  [\nu (t-s)]^{-1} \Big ( \int_{\R^3}  \int_{\R^3}  \tilde  p(s,t,x,y) |u(s,x)-u(s,y)|^{2r}  dx \, dy \Big )^{\frac 1r} ds,
\end{eqnarray}
recalling that $\tilde p$ is the usual heat kernel \eqref{def_tilde_p}, 
because the Jacobian associated is equal to $1$, as $u$ is incompressible, see for instance \cite{chem:98} Lemma 1.1.1.

To make integrable the time singularity, we make appearing the SobolevÐSlobodeckij norm, for $\gamma \in (0,1)$, $\tilde q \in [1,+\infty)$, defined by
\begin{equation}\label{def_norm_Sobolev_Slobodeckij}
	[u(s,\cdot)]_{W^{\gamma,\tilde q}} := \Big (\int_{\R^3} \int_{\R^3} \frac{|u(s,x)-u(s,y)|^{\tilde q}}{|x-y|^{3+ \gamma \tilde q}}dy \,  dx \Big) ^{\frac 1{\tilde q}},
\end{equation} 
see \cite{trie:83} Section 2.2.2.

The norm \begin{equation}\label{def_norm_Sobolev_nonhomoe}
	\|u(s,\cdot)\|_{W^{\gamma,\tilde q}} :=\|u(s,\cdot)\|_{L^{\tilde q}}+ [u(s,\cdot)]_{W^{\gamma,\tilde q}} 
\end{equation} 
stands for the non-homogeneous Sobolev norm. 

We then can write,
\begin{eqnarray}\label{ineq_terme_mechant_1}
	&&	\Big \| x \mapsto  \int_0^t [\nu (t-s)]^{-1} \int_{\R^3}  \bar p^{t,x }(s,t,x,y)
	|u(s,\theta_{s,t}(x))-u(s,y)|^{2} dy \, ds \Big \|_{L^r}
	\nonumber \\
	&\leq &
	C	\int_0^t  [\nu (t-s)]^{-1} \Big (  \int_{\R^3} \int_{\R^3}  \tilde  p(s,t,x,y) |x-y|^{3+2r\gamma} \frac{|u(s,x)-u(s,y)|^{2r}}{|x-y|^{3+2r\gamma}}  dy \, dx \Big )^{\frac 1r} ds
	\nonumber \\
	&\leq &
	C	\int_0^t  [\nu (t-s)]^{-1+\gamma} \Big (\int_{\R^3}  \int_{\R^3} 
	\frac{|u(s,x)-u(s,y)|^{2r}}{|x-y|^{3+2r\gamma}} dy \, dx \Big )^{\frac 1r} ds
	\nonumber \\
	&\leq &
	C	\int_0^t  [\nu (t-s)]^{-1+\gamma} [u(s,\cdot)]_{W^{\gamma,2r}}^2 ds.
\end{eqnarray}
The penultimate inequality comes from the exponential absorption \eqref{ineq_absorb} of the heat kernel, i.e. $$ \tilde  p(s,t,x,y) |x-y|^{3+2r\gamma} \leq C  \tilde  p(s,t,Cx,Cy) [\nu (t-s)]^{\frac 32+r\gamma} \leq C [\nu (t-s)]^{r\gamma}.$$


\textit{In fine}, we can deduce the result of Lemma \ref{Lemme_T_theta_u_Lr}  for  $\nabla u$ in the following section.

\subsubsection{Control of $\|\nabla u\|_{L^{\tilde r}_T L^r}$}

We can handle with $\|\nabla u\|_{L^{\tilde r}_T L^r}$ instead of $\|\omega\|_{L^{\tilde r}_T L^r}$ by usual Gagliardo-Nirenberg inequality as soon as $r \in (1,+ \infty)$.

For the Lebesgue space in time, we have to be more careful than in Section \ref{sec_nabla_u_final}. Indeed, an \textit{extra} interpolation argument is required, because there is no margin in the previous analysis in the time integrability as we upper-bound by $\|\nabla u(s,\cdot)\|^2_{L^2}$.

We have to rewrite the Sobolev norm $\|u(s,\cdot)\|_{W^{\gamma,2r}}$ appearing in \eqref{ineq_terme_mechant_1}:
\begin{eqnarray*}
	&&	\Big \| (t,x) \mapsto  \int_0^t [\nu (t-s)]^{-1} \int_{\R^3}  \bar p^{t,x }(s,t,x,y)
	|u(s,\theta_{s,t}(x))-u(s,y)|^{2} dy \, ds \Big \|_{L^r}
	\nonumber \\
	&\leq &
	C	\int_0^t  [\nu (t-s)]^{-1+\gamma} \|u(s,\cdot)\|_{W^{\gamma,2r}}^2 ds.
\end{eqnarray*}

$\bullet$ If $r \leq  3$.

From interpolation inequality of  Sobolev spaces by  Brezis Mirunescu \cite{brez:miru:18},
 we have from \eqref{ineq_Brez_Mirunescu},
\begin{equation*}
	\|u(s,\cdot)\|_{W^{\gamma,2r}} \leq C \|u(s,\cdot)\|_{W^{0,6}}^\alpha \|u(s,\cdot)\|_{W^{1,r}}^{1-\alpha}
	=C \|u(s,\cdot)\|_{L^6}^\alpha \|\nabla u(s,\cdot)\|_{L^r}^{1-\alpha},
\end{equation*}
such that from \eqref{Def_Alpha_Sobolev},
%
\begin{equation*}
	 \alpha =\frac{3}{(6-r)} \ \text{ and } 1-\alpha =
	 \frac{3-r}{(6-r)},
\end{equation*}
also
$ 2 \alpha-1
= \frac{r}{6-r}$.

Let us remark that $1-\alpha <\frac 12$.
%
%
Therefore,
\begin{eqnarray*}
\|\nabla u(t,\cdot)\|_{L^r}	&\leq &	\| \nabla \times  u_0\|_{L^r}+ \int_{0}^t [\nu(t-s)]^{-\frac{1}{2}}\|  f(s,\cdot)\|_{L^r} ds 
\nonumber \\
&&+\Big \| x \mapsto  \int_0^t [\nu (t-s)]^{-1} \int_{\R^3}  \bar p^{t,x }(s,t,x,y)
	|u(s,\theta_{s,t}(x))-u(s,y)|^{2} dy \, ds \Big \|_{L^r} 
	\nonumber \\
	&\leq & 
	\| \nabla \times  u_0\|_{L^r}+ \int_{0}^t [\nu(t-s)]^{-\frac{1}{2}}\|  f(s,\cdot)\|_{L^r} ds
	\nonumber \\
	&& +	C	\nu^{-1+\gamma} 
		\int_0^t 
		(t-s)^{-1+\gamma} \|u(s,\cdot)\|_{L^6}^{2\alpha } \|\nabla u(t-\tilde s,\cdot)\|_{L^r}^{2(1-\alpha) }
	ds.
\end{eqnarray*}
By Lemma \ref{lemma_gronwall}, we derive
\begin{eqnarray*}
&&	\|\nabla u(t,\cdot)\|_{L^r}^*
\nonumber \\
&	\leq& 	
\bigg (\Big ( \big (	\| \nabla \times  u_0\|_{L^r}+ \int_{0}^t [\nu(t-s)]^{-\frac{1}{2}}\|  f(s,\cdot)\|_{L^r} ds\big )^{2\alpha-1}
	 +	C	\nu^{-1+\gamma} 
	\int_0^t 
	(t-s)^{-1+\gamma} \|u(s,\cdot)\|_{L^6}^{2\alpha } 
	ds \Big)^{\frac{1}{2\alpha-1}} \bigg )^*.
\end{eqnarray*}
Hence, we get by  Hardy-Littlewood-Sobolev  inequality \eqref{ineq_Hardy_Littlewood_Sobolev},
\begin{eqnarray*}
	\|\nabla u\|_{L^{\tilde r}L^r}	\leq 	
	C T^{\frac{1}{\tilde r}}	\| \nabla \times  u_0\|_{L^r}+C  T^{ \frac 12} \nu ^{-\frac{1}{2}} \|  f\|_{L^{\tilde r}_TL^r} 
	+	C	\nu^{-1+\gamma} 
	\|u(s,\cdot)\|_{L^{2\alpha r'}L^6}^{\frac 1{2\alpha-1}}. 
\end{eqnarray*}
To use inequality of $\|\nabla u\|_{L^2_T L^2}$ in \eqref{ineq_D_NS_Leray}, we have to impose that
\begin{equation*}
	r'= \frac{1}{\alpha}= \frac{6-r}{3},
\end{equation*}
then from \eqref{ineq_Hardy_Littlewood_Sobolev}
%
\begin{equation*}
	\frac{3}{6-r}+\frac{3}{6-r}=1+\frac{1}{\frac{\tilde r}{2\alpha-1}}
	 \ \Leftrightarrow \frac{1}{\tilde r}=1.
\end{equation*}
Namely, we get $\|\nabla u\|_{L^1_T L^r} \leq C\|\omega \|_{L^1_T L^r}$, $1<r \leq 3$. In particular, the most powerful case is $r=3$, i.e. $\|\nabla u\|_{L^1_T L^3}<+ \infty$ (implying, by Sobolev embedding, that $\| u\|_{L^1_T L^\infty}<+ \infty$).


To get a time norm  $\frac{\tilde r }{2 \alpha-1}\geq 1$, required in \eqref{ineq_Hardy_Littlewood_Sobolev},
we must have 
\begin{equation}\label{condi_nabla_u_3}
	\tilde r \geq 2 \alpha-1 = \frac{r}{6-r} \Leftrightarrow \frac{1}{\tilde r} \leq \frac{6}{r}-1,
\end{equation}
which is indeed true for $\tilde r = 1$ and $r =3$.

Finally, by an extra interpolation, we get, for any $r \in [2,3]$,
\begin{equation*}
	\|\nabla u(t,\cdot)\|_{L^r} \leq C 	\|\nabla u(t,\cdot)\|_{L^2}^\alpha	\|\nabla u(t,\cdot)\|_{L^3}^{1-\alpha},
\end{equation*}
with
\begin{equation*}
	\frac 1r = \frac{\alpha}{2}+ \frac{1-\alpha}{3} \ \Leftrightarrow \frac{\alpha}{6}= \frac{3-r}{3r} \ \Leftrightarrow \alpha= \frac{2(3-r)}{r},
\end{equation*}
and by H\"older's inequality
\begin{equation*}
	\|\nabla u(t,\cdot)\|_{L^{\tilde r }_T L^r} \leq C	\|\nabla u(t,\cdot)\|_{L^2_T L^2}^{p\alpha}\|\nabla u(t,\cdot)\|_{L^1_T L^3}^{q(1-\alpha)},
\end{equation*}
with $p^{-1}+q^{-1}=1$, such that $p\alpha \tilde r =2$ and $ q(1-\alpha) \tilde r= 1$. 

From double interpolation \eqref{eq_double_interpol}, 
\begin{equation*}
	\tilde r 
	=\frac{r}{2r-3},
\end{equation*}
That is to say
\begin{equation*}
	\frac{1}{\tilde r} + \frac{3}{r}=2.
\end{equation*}
$\bullet$ If $r > 3$, and if $r \in (4,6)$
\\

From interpolation inequality of  Sobolev spaces by  Brezis Mirunescu \eqref{ineq_Brez_Mirunescu}, 
we can write 
\begin{equation*}
	\|u(s,\cdot)\|_{W^{\gamma,2r}} \leq C \|u(s,\cdot)\|_{W^{0,\infty}}^\alpha \|u(s,\cdot)\|_{W^{1,2}}^{1-\alpha}
	=C \|u(s,\cdot)\|_{L^\infty}^\alpha \|\nabla u(s,\cdot)\|_{L^2}^{1-\alpha},
\end{equation*}
such that from \eqref{Def_Alpha_Sobolev},
%
\begin{equation*}
	\alpha =\frac{r-1}{r} \ \text{ and } 1-\alpha =
	\frac{1}{r}.
\end{equation*}
%
%
Moreover, from Gagliardo-Nirenberg inequality,
\begin{equation*}
\| u(t,\cdot)\|_{L^\infty}\leq C \|\nabla u(t,\cdot)\|_{L^r}^{\alpha'}\|u(t,\cdot)\|_{L^p}^{1-\alpha'},
\end{equation*} 
with
\begin{equation*}
0= (\frac{1}{r}-\frac{1}{3}) \alpha'+ \frac{1-\alpha'}{p},
\end{equation*}
which equivalently write
\begin{equation*}
	\alpha' = \frac{3r}{rp+3r-3p}.
\end{equation*}
If $p=6$, we get
$
	\alpha' = \frac{3r}{9r-18}= \frac{r}{3(r-2)}.
$

To apply Lemma \ref{lemma_gronwall}, we need to impose $2\alpha \alpha' < 1$, in other word
\begin{equation}
1> 2\frac{r-1}{r}	\frac{3r}{rp+3r-3p}
\ \Leftrightarrow 
r >
 3+\frac{3}{p-3}. 
\end{equation}
In particular, if $p=6$, then $r > 3 \frac{6-2}{6-3}=4$.

Next, we obtain 
\begin{eqnarray*}
	\|\nabla u(t,\cdot)\|_{L^r}
	&\leq & 
	\| \nabla \times  u_0\|_{L^r}+ C\int_{0}^t [\nu(t-s)]^{-\frac{1}{2}}\|  f(s,\cdot)\|_{L^r} ds
	\nonumber \\
	&& +	C	\nu^{-1+\gamma} 
	\int_0^t 
	(t-s)^{-1+\gamma} \|u(s,\cdot)\|_{L^p}^{2\alpha (1-\alpha')}
	 \|\nabla u( s,\cdot)\|_{L^2}^{2(1-\alpha) }
 \|\nabla u( s,\cdot)\|_{L^r}^{2\alpha \alpha' }
	ds.
\end{eqnarray*}
By Lemma \ref{lemma_gronwall}, we derive for $r>4$,
\begin{eqnarray}\label{ineq_nabla_u_r4}
	\|\nabla u(t,\cdot)\|_{L^r}^*
	&\leq &	
\bigg (	\Big ( \big (	\| \nabla \times  u_0\|_{L^r}+ C \int_{0}^t [\nu(t-s)]^{-\frac{1}{2}}\|  f(s,\cdot)\|_{L^r} ds\big )^{2\alpha-1}
		\nonumber \\
	&&
	+	C	\nu^{-1+\gamma} 
\int_0^t 
	(t-s)^{-1+\gamma} \|u(s,\cdot)\|_{L^p}^{2\alpha (1-\alpha')}	 \|\nabla u(s,\cdot)\|_{L^2}^{2(1-\alpha)} 
	ds \Big)^{\frac{1}{1-2\alpha \alpha'}} \bigg )^*.
\end{eqnarray}
%
%
%
Now, let us impose $p=6$, such that, up to a Gagliardo-Nirenberg inequality, the contribution of the term $\|\nabla u(s,\cdot)\|_{L^2}$ is
$$2\alpha(1-\alpha')+2(1-\alpha)=2(1-\alpha\alpha') = 2(1- \frac{r-1}{r}\frac{r}{3(r-2)})
= 2\frac{2r-5}{3(r-2)}.$$

Hence, we get by Corollary \ref{corol_gronwall}, using  Hardy-Littlewood-Sobolev  inequality \eqref{ineq_Hardy_Littlewood_Sobolev} with the same notations, 
recalling that $\gamma= \frac{1}{r}$:
\begin{eqnarray*}
	\|\nabla u\|_{L^{\tilde r}_TL^r}\leq 	
	C T^{\frac{1}{\tilde r}}	\| \nabla \times  u_0\|_{L^r}+C  T^{ \frac 12} \nu ^{-\frac{1}{2}} \|  f(s,\cdot)\|_{L^{\tilde r }_TL^r} ds
	+	C	\nu^{-1+\gamma} 
	\|u(s,\cdot)\|_{L^{2(1-\alpha\alpha')r'}L^6}^{2(1-\alpha\alpha')(1-2\alpha\alpha')}. 
\end{eqnarray*}
From \eqref{ineq_D_NS_Leray}, we have to impose that
\begin{equation*}
r'= \frac{1}{(1-\alpha\alpha')}= 
\frac {3(r-2)}{(2r-5)},
\end{equation*}
%
then from \eqref{ineq_Hardy_Littlewood_Sobolev}
\begin{equation*}
	\frac{(2r-5)} {3(r-2)}-\frac{1}{r}=\frac{1-2\alpha\alpha'}{\tilde r} = \frac{\frac{r-4}{3r-6}}{\tilde r}.
\end{equation*}
Hence, 
\begin{equation*}
\frac{1}{\tilde r}=\frac{3r-6}{r-4} \frac{2r^2-5r-3r+6}{3r(r-2)}
= 
\frac{2(r-1)(r-3)}{r(r-4)},
\end{equation*} 
namely
\begin{equation}\label{ineq_rtilde_grad_r_grand}
\tilde r=\frac{r(r-4)}{4r^2-13r+6}
>0,
\end{equation} 
as soon as $r>4$.
\\

Furthermore, the time integration  index in \eqref{ineq_nabla_u_r4} is 
$\geq 1$ if 
\begin{equation*}
	\tilde r \geq  1-2\alpha\alpha' = 1-\frac{2(r-1)}{r}\frac{3r}{rp+3r-3p}
	= \frac{r-4}{3r-6}.
\end{equation*}
This is compatible with \eqref{ineq_rtilde_grad_r_grand}, if $r\leq 6$.
\\

$\bullet $ If $3<r\leq 4$
\\

 Lets us show that, we can hope to obtain similar result with the same technique by choosing in \eqref{ineq_nabla_u_r4} $p<6$, such that
\begin{equation*}
	\|u(s,\cdot)\|_{L^p} \leq  	\|u(s,\cdot)\|_{L^6}^{\alpha''}	\|u(s,\cdot)\|_{L^2}^{1-\alpha''},
\end{equation*}
with
\begin{equation*}
\frac 1p = \frac{\alpha''}{6}+\frac{1-\alpha''}{2} \ 
\Leftrightarrow
	\alpha'' 	
	= \frac{3(p-2)}{2p} , \ \ 1-\alpha'' 	
	= \frac{6-p}{2p},
\end{equation*}
from \eqref{Def_Alpha}.
\\

The computations are the same except that we change the contribution of $\|\nabla u(s,\cdot)\|_{L^2}$ by 
\begin{eqnarray*}
	2\alpha\alpha''(1-\alpha')+2(1-\alpha)
	&=& 2 \frac{r-1}{r} 
\frac{3(p-2)}{2p}	(1-\frac{3r}{rp+3r-3p})
+\frac{2}{r}
	 \nonumber \\
	 &=&
	\frac{(3p-6)r^2+(-10p+30)r+3p-18}{r(rp+3r-3p)}
	=:F_2(r,p).
\end{eqnarray*}
Hence, still by Hardy-Littlewood-Sobolev  inequality \eqref{ineq_Hardy_Littlewood_Sobolev}, 
(recalling that $\gamma= \frac{1}{r}$):
\begin{eqnarray*}
	\|\nabla u\|_{L^{\tilde r}_TL^r}\leq 	
	C T^{\frac{1}{\tilde r}}	\| \nabla \times  u_0\|_{L^r}+C  T^{ \frac 12} \nu ^{-\frac{1}{2}} \|  f(s,\cdot)\|_{L^{\tilde r }_TL^r} ds
	+	C	\nu^{-1+\gamma} 
	\|\nabla u(s,\cdot)\|_{L^{F_2(r,p)r'}L^2}^{\frac{F_2(r,p)}{1-2\alpha\alpha'}},
\end{eqnarray*}
where we have to impose that
\begin{equation*}
	r'= \frac{1}{F_2(r,p)}= 
	\frac {r(rp+3r-3p)}{(3p-6)r^2+(-10p+30)r+3p-18},
\end{equation*}
and from inequality \eqref{ineq_Hardy_Littlewood_Sobolev}
\begin{equation*}
(\frac{1-2\alpha\alpha'}{\tilde r}=)\frac{\frac{r-4}{3r-6}}{\tilde r}
=		\frac{(3p-6)r^2+(-10p+30)r+3p-18} {r(rp+3r-3p)} -\frac 1r,
\end{equation*}
which is equivalent to,
\begin{equation*}
\tilde r = 	
\frac{r(r-4)(pr+3r-3p)}{3(r-2)(3pr^2-6r^2-11pr+27r+6p-18)}
=: F _3(r,p).
\end{equation*}
We differentiate by $p$ the function,
\begin{equation*}
	\partial_p  F _3(r,p)= -\frac{(r-4)(r-3)(r-1)r(5r-6)}{(r-2)(3pr^2-6r^2-11pr+27r+6p-18)^2}
	\leq 0,
\end{equation*}
for $3\leq r \leq 4$.
Hence, the maximum is reached for $p=6$
\begin{equation*}
	\tilde r = 	
	\frac{r(r-4)}{4r^2-13r+6}\geq 0.
\end{equation*}

\subsection{Case $k=2$}

\begin{lemma}\label{Lemme_T_theta_nabla2_u_Lr}
	For all $T>0$ and $r \in (1,+ \infty )$, we have $		\|\nabla^2 u\|_{L^{\tilde r}_T L^r} <+ \infty$ 
%
	if
	\begin{equation*}
		\begin{cases}
	\frac{2}{\tilde r} +\frac{3}{r}=4,
	\ \text{ if } r< \frac 32,
	\\
		\tilde r = \frac{2r(3r-4)}{13r^2-26r+12}  \ \text{ if } r \in [\frac 32,2),
		\\
			\frac{1}{\tilde r}+ \frac{6}{r}=9 , \ \text{ if } r\geq 2.
		\end{cases}
	\end{equation*}
\end{lemma}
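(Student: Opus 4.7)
The plan is to push the strategy of Lemma~\ref{Lemma_omega_LrLr} one derivative further. Since $\Delta u=-\nabla\times\omega$ for divergence free fields, the Calder\'on-Zygmund inequality yields $\|\nabla^2 u(t,\cdot)\|_{L^r}\leq C\|\nabla\omega(t,\cdot)\|_{L^r}$ for every $r\in(1,+\infty)$, so it is enough to bound $\nabla\omega$. Differentiating once more in $x$ the representation~\eqref{ident_omega_final} (licit because $u$ and $\omega$ do not depend on the freezing point $(\tau,\xi)$, so the differentiation can be performed before setting $(\tau,\xi)=(t,x)$) gives a Duhamel type formula whose critical term is
\begin{equation*}
\int_0^t\!\!\int_{\R^3}\nabla_x^2\nabla_y\hat p^{\tau,\xi}(s,t,x,y)\big|_{(\tau,\xi)=(t,x)}[u(s,\theta_{s,t}(x))-u(s,y)]^{\otimes 2}\,dy\,ds.
\end{equation*}

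Using the Gaussian estimate $|\nabla_x^2\nabla_y\hat p^{t,x}|\leq C[\nu(t-s)]^{-3/2}\bar p^{t,x}$, Jensen's inequality against the probability density $\bar p^{t,x}$, the volume preserving change of variable $x'=\theta_{s,t}(x)$ (incompressibility), and the heat kernel absorption $\tilde p(s,t,x,y)|x-y|^{3+2r\gamma}\leq C[\nu(t-s)]^{r\gamma}$ exactly as in~\eqref{ineq_terme_mechant_1}, I obtain
\begin{equation*}
\|\nabla^2 u(t,\cdot)\|_{L^r}\leq\text{(data)}+C\nu^{-3/2+\gamma}\int_0^t(t-s)^{-3/2+\gamma}[u(s,\cdot)]_{W^{\gamma,2r}}^{2}\,ds,
\end{equation*}
which is time-integrable if and only if $\gamma>1/2$.

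The Sobolev-Slobodeckij seminorm is then interpolated by Brezis-Mironescu~\eqref{ineq_Brez_Mirunescu}, $\|u\|_{W^{\gamma,2r}}\leq C\|u\|_{L^p}^{\alpha}\|\nabla u\|_{L^q}^{1-\alpha}$ with $\gamma=1-\alpha$, and hence with $\alpha<1/2$. The choice of $(p,q)$ and the closing by Lemma~\ref{lemma_gronwall} / Corollary~\ref{corol_gronwall} together with Hardy-Littlewood-Sobolev~\eqref{ineq_Hardy_Littlewood_Sobolev} split into three regimes. For $r<3/2$ the energy pair $(p,q)=(2,2)$ already gives a small enough power of $\|\nabla u\|_{L^2}$ to be absorbed by~\eqref{ineq_D_NS_Leray}, and matching the time integrability recovers the Constantin-Lions relation $2/\tilde r+3/r=4$. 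For $r\in[3/2,2)$ one couples $p=6$ (bounded through~\eqref{ineq_uL6}) with $q=r$ (using the previously proved Lemma~\ref{Lemma_omega_LrLr}), optimises over the admissible $p$ as in the case $3<r\leq 4$ of Section~\ref{sec_preuve} above, and the resulting rational expression reduces to $\tilde r=\frac{2r(3r-4)}{13r^2-26r+12}$. For $r\geq 2$ one pushes $p$ to $+\infty$ and controls $\|u\|_{L^\infty}$ by a further Gagliardo-Nirenberg bound in terms of $\|\nabla u\|_{L^r}$ and the energy quantities; matching the time exponent with the Leray controls then fixes $1/\tilde r+6/r=9$.

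The main obstacle is that the time singularity $(t-s)^{-3/2+\gamma}$ is genuinely worse than the $(t-s)^{-1+\gamma}$ used for $k=1$: the Brezis-Mironescu parameter $\alpha$ must remain strictly below $1/2$, which tightens the admissible $(r,\tilde r)$ window; combined with the constraint $\beta<1$ required to apply Lemma~\ref{lemma_gronwall}, this is what produces the rational boundary in the middle regime and the slope change at $r=2$. Verifying $\beta<1$ across the whole range $r\geq 3/2$ will require the extra Gagliardo-Nirenberg interpolation to trade $\|u\|_{L^\infty}$ for norms already controlled by the Leray bounds and Lemma~\ref{Lemma_omega_LrLr}, exactly mirroring the mechanism encountered in the case $3<r\leq 4$ for $k=1$.
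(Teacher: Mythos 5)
Your overall architecture is the right one and matches the paper: differentiate the vorticity Duhamel formula once more, absorb $|x-y|^{3+2r\gamma}$ into the Gaussian to trade the singularity $(t-s)^{-3/2}$ for $(t-s)^{-3/2+\gamma}[u(s,\cdot)]_{W^{\gamma,2r}}^2$ with the constraint $\gamma>\frac12$, then close with Brezis--Mironescu, Lemma \ref{lemma_gronwall} and Hardy--Littlewood--Sobolev in three regimes, finishing with Calder\'on--Zygmund to pass from $\nabla\omega$ to $\nabla^2u$. However, the concrete interpolation you commit to in the middle regime does not survive the constraint $\gamma>\frac12$. If you take $\|u\|_{W^{\gamma,2r}}\lesssim\|u\|_{L^p}^{\alpha}\|\nabla u\|_{L^q}^{1-\alpha}$ with $q=r$ (your ``couple $p=6$ with $q=r$ using Lemma \ref{Lemma_omega_LrLr}''), the exponent relation $\frac{1}{2r}=\frac{\alpha}{p}+\frac{1-\alpha}{r}$ together with $1-\alpha=\gamma>\frac12$ forces $\frac{\alpha}{p}=\frac{1}{2r}-\frac{1-\alpha}{r}<\frac{1}{2r}-\frac{1}{2r}=0$, which is impossible for any $p>0$; so no admissible $p$ exists and the optimisation you invoke is over an empty set. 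The paper's actual choice for $r\in[\frac32,2)$ is $(p,q)=(\infty,2)$, giving $\gamma=\frac1r>\frac12$ precisely for $r<2$, and the factor $\|u\|_{L^\infty}^{2\alpha}$ is then paid for by the Sobolev embedding $\|u\|_{L^\infty}\lesssim\|\nabla^2u\|_{L^r}^{\alpha'}\|u\|_{L^6}^{1-\alpha'}$ with $\alpha'=\frac{r}{5r-6}$; it is this self-referential term, with $\beta=2\alpha\alpha'=\frac{2(r-1)}{5r-6}<1$, that triggers Lemma \ref{lemma_gronwall} and produces the rational boundary $\tilde r=\frac{2r(3r-4)}{13r^2-26r+12}$. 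Lemma \ref{Lemma_omega_LrLr} is not used at all in this regime.

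Two smaller inaccuracies: for $r<\frac32$ the pair $(p,q)=(2,2)$ forces $r=1$ in the Brezis--Mironescu exponent relation, so it cannot literally be used; the paper takes $(p,2)$ with $p\geq 2r$ free and then interpolates $\|u\|_{L^p}$ between $L^2$ and $L^6$, after which no Bihari--LaSalle step is needed in that regime (the right-hand side contains only energy norms). For $r\geq2$, the quantity bootstrapped through $\|u\|_{L^\infty}$ is $\|\nabla^2u\|_{L^r}$ (again via $\alpha'=\frac{r}{5r-6}$), not $\|\nabla u\|_{L^r}$, and the derivative endpoint is $W^{1,p}$ with $r<p\leq\frac32 r$ chosen as $p=\frac{11r^2-6r}{9r-6}$ to keep the time index at least $1$; this is what yields $\frac{1}{\tilde r}+\frac{6}{r}=9$.
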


%
%

\begin{proof}[Proof of Lemma \ref{Lemme_T_theta_nabla2_u_Lr}]

	Let us start again the analysis of $\omega$, with an extra derivative.
	Specifically, taking the gradient of \eqref{Duhamel_omega_xi}, 
	\begin{eqnarray*}
	\nabla 	\omega (t,x) &= &\hat P^{\tau,\xi} \nabla \nabla \times  u_0 (t,x)+ \hat G ^{\tau,\xi} \nabla  \nabla \times  f(t,x) 
		\nonumber \\
		&&+
		\int_0^t \int_{\R^3} \nabla_x   \nabla_x \times \Big ( \nabla_y  \hat p^{\tau,\xi }(s,t,x,y)
		[u(s,\theta_{s,\tau}(\xi))-u(s,y)]^{\otimes 2} \Big )dy \, ds.
	\end{eqnarray*}
The definition, coefficient by coefficient, of  $ \nabla_x \times \Big ( \nabla_y  \hat p^{\tau,\xi }(s,t,x,y)
[u(s,\theta_{s,\tau}(\xi))-u(s,y)]^{\otimes 2} \Big )$  is described in \eqref{ident_rot_p_u2}.
After applying the curl and the gradient, we choose the \textit{freezing} point $(\tau,\xi)=(t,x)$,
	\begin{eqnarray}\label{Duhamel_nabla_omega}
	\nabla	\omega (t,x) &=& \hat P^{t,x} \nabla \nabla \times  u_0(t,x)+ \hat G ^{t,x} \nabla \nabla \times  f(t,x) 
		 \\
		&&+
		\int_0^t \int_{\R^3} \nabla_x \nabla_x \times \Big ( \nabla_y  \hat p^{\tau,\xi }(s,t,x,y)
		[u(s,\theta_{s,\tau}(\xi))-u(s,y)]^{\otimes 2} \Big ) \Big |_{(\tau,\xi)=(t,x)}dy \, ds.
		\nonumber
	\end{eqnarray}

By integration by parts and by Fourier multipliers estimates, for any $r >1$, we get from the thermic representation of Besov norm,
\begin{equation*}
\Big (\int_0^t \|\hat P^{t,\cdot} \nabla \nabla \times  u_0(t,\cdot)\|_{L^r}^{\tilde r} dt \Big )^{\frac{1}{\tilde r }}
\leq  \Big (\int_0^t t^{-1} t^{(1-\alpha/2)\tilde r} \|\hat P^{t,\cdot} \nabla \nabla \times  u_0(t,\cdot)\|_{L^r}^{\tilde r} d t \Big )^{\frac{1}{\tilde r }}
\leq C \|u_0\|_{\dot B_{r,\tilde r}^\alpha},
\end{equation*}
with
\begin{equation*}
1	- \alpha/2=\frac{1}{\tilde r } \ \Leftrightarrow \ \alpha = 2\frac{\tilde r - 1}{\tilde r}.
\end{equation*}
The semi-group $\hat P^{t,\cdot} $ above becomes the usual heat semi-group thanks to the change of variables as performed in \eqref{change_variable}.
\\

For the force function $f$, with the same argument leads to
\begin{equation}
\|\hat G ^{t,\cdot} \nabla \nabla \times  f(t,\cdot) \|_{L^r}
\leq \|f\|_{L^\infty(\dot B_{r,\infty}^{2})}.
\end{equation}

	$\bullet$ Cases $r < \frac 32$
\\

Still by interpolation  of Brezis Mirunescu and in the  Lebesgue spaces \eqref{ineq_interpol_Lebesgue},  
	for  $p\geq 2r$
	\begin{eqnarray}\label{ineq_interpol_nabla_omega1}
		\|u(s,\cdot)\|_{W^{\gamma,2r}} &\leq&
			C 	\|u(s,\cdot)\|_{L^p}^{\alpha }	 \|\nabla u(s,\cdot)\|_{L^2}^{1-\alpha}
				\nonumber \\
		&\leq &
		C 	\|u(s,\cdot)\|_{L^2}^{\alpha }	\|u(s,\cdot)\|_{L^6}^{\alpha (1-\tilde \alpha)} \|\nabla u(s,\cdot)\|_{L^2}^{1-\alpha}
		\nonumber \\
		&\leq & C 	\|u(s,\cdot)\|_{L^2}^{\alpha \tilde \alpha}	 \|\nabla u(s,\cdot)\|_{L^2}^{1-\alpha\tilde \alpha},
	\end{eqnarray}
	by Gagliardo-Nirenberg interpolation inequality, and
	by \eqref{Def_Alpha_Sobolev}
	\begin{equation*}
	\alpha 	= \frac{p(2-2r)}{2r(2-p)}= \frac{p(r-1)}{r(p-2)} , \ \ 1-\alpha =\gamma	= \frac{2(2r-p)}{2r(2-p)}= \frac{(p-2r)}{r(p-2)},
	\end{equation*}
	and from 
	\eqref{Def_Alpha}
	\begin{equation*}
\tilde 	\alpha 	= \frac{2(6-p)}{p(6-2)}= \frac{6-p}{2p} , \ \ 1- \tilde \alpha 	= \frac{6(p-2)}{p(6-2)}= \frac{3(p-2)}{2p}.
	\end{equation*}
Coming back to  \eqref{Duhamel_nabla_omega}, with the same analysis performed in \eqref{ineq_terme_mechant_1}
	\begin{eqnarray*}
		&&	 \Big \| x \mapsto \int_0^t \int_{\R^3} \nabla_x \nabla_x \times \Big ( \nabla_y  \hat p^{\tau,\xi }(s,t,x,y)
		[u(s,\theta_{s,\tau}(\xi))-u(s,y)]^{\otimes 2} \Big ) \Big |_{(\tau,\xi)=(t,x)}dy \, ds \Big \|_{L^r}
		\nonumber \\
			&\leq & 
		C	\int_{0}^t [\nu (t-s) ]^{-\frac 32+\gamma}     \|u( s,\cdot)\|_{W^{\gamma,2r}}^2  d  s 	.
	\end{eqnarray*}
Importantly, the time singularity is integrable as soon as $\gamma > \frac 12$.

Next, by interpolation inequality \eqref{ineq_interpol_nabla_omega1}, we derive
	\begin{eqnarray}\label{ineq_terme_mechant_2}
	&&	\Big \| x \mapsto \int_0^t \int_{\R^3} \nabla_x \nabla_x \times \Big ( \nabla_y  \hat p^{\tau,\xi }(s,t,x,y)
	[u(s,\theta_{s,\tau}(\xi))-u(s,y)]^{\otimes 2} \Big ) \Big |_{(\tau,\xi)=(t,x)}dy \, ds \Big \|_{L^r}
	\nonumber \\
	&\leq & 
	C\|u\|_{L^\infty_T L^2}^{2\alpha \tilde \alpha}	\int_{0}^t [\nu (t- s) ]^{-\frac 32+\gamma}  
	\|\nabla u( s,\cdot)\|_{L^2}^{2(1-\alpha\tilde \alpha)}
 d  s .
\end{eqnarray}
We obtain, by Hardy-Littlwood-Sobolev inequality,
	\begin{eqnarray*}
&&	\Big \| (t,x) \mapsto \int_0^t \int_{\R^3} \nabla_x \nabla_x \times \Big ( \nabla_y  \hat p^{\tau,\xi }(s,t,x,y)
[u(s,\theta_{s,\tau}(\xi))-u(s,y)]^{\otimes 2} \Big ) \Big |_{(\tau,\xi)=(t,x)}dy \, ds \Big \|_{L^{\tilde r}_TL^r}
\nonumber \\
&\leq & 
C\|u\|_{L^\infty_T L^2}^{2\alpha \tilde \alpha}
\|\nabla u\|_{L^{(2(1-\alpha \tilde \alpha)r')}_TL^2}^{2(1-\alpha\tilde \alpha)}
,
\end{eqnarray*}
with
\begin{equation*}
\frac{1}{ r'} + \frac 32-\gamma= 1 + \frac{1}{\tilde r}.
\end{equation*}
In order to use \eqref{ineq_D_NS_Leray}, we have to suppose that
\begin{equation*}
	r' = (1-\alpha \tilde \alpha)^{-1}=\frac{1}{1- \frac{p(r-1)}{r(p-2)}\frac{6-p}{2p}}
	= \frac{2r(p-2)}{3rp-10r+6-p}.
\end{equation*}
Hence,
\begin{equation*}
  \frac{1}{\tilde r}=\frac{3rp-10r+6-p}{2r(p-2)}+ \frac 12-\frac{(p-2r)}{r(p-2)}
  =\frac{4r-3}{2r},
\end{equation*}
namely
\begin{equation*}
\frac{2}{\tilde r} +\frac{3}{r}=4.
\end{equation*}
\textit{In fine}, once again, to get an inequality with $\nabla^2 u$ instead of $\nabla \omega$, we can use a
Calder\`on-Zygmund inequality for the Biot and Savart representation in $L^r$, $r \in (1,+ \infty)$.
\\

The result for the case $(r,\tilde r)= (\frac{4}{3},\frac{8}{7})$ is somehow weaker than the ones of \cite{cons:90}, \cite{lion:96}, \cite{vass:10}, \cite{vass:yang:21} where $(r,\tilde r)= (\frac{4}{3}^-,\frac{4}{3}^-)$.  
This last case $r=\tilde r$ allows to perform a specific analysis on the PDE which does not seem to work in general Lebesgue space $L^{\tilde r}_T L^r$. 
That is why, we perform  interpolations, in Section \ref{sec_interpol_finale} below, taking account of this case.\\

	$\bullet$ Cases $2>r \geq  \frac 32$ 
\\

Let us change the interpolation analysis \eqref{ineq_interpol_nabla_omega1},
	\begin{equation}
	\|u(s,\cdot)\|_{W^{\gamma,2r}} \leq
	C 	\|u(s,\cdot)\|_{L^\infty}^{ \alpha} \|\nabla u(s,\cdot)\|_{L^2}^{1-\alpha},
\end{equation}
where we define
\begin{equation*}
\frac{1}{2r}= \frac{1-\alpha}{2} \ \Leftrightarrow \alpha = \frac{r-1}{r}
\ \Leftrightarrow \gamma= \frac{1}{r}.
\end{equation*}
Let us recall the constraint $ \gamma>\frac{1}{2}$ which means that $r <2$.
\\

Similarly to Section \ref{sec_preuve_1}, we derive
\begin{eqnarray*}
&&	\|\nabla^2 u(t,\cdot)\|_{L^r} 
\nonumber \\
	&\leq&
	\|\nabla ^2 u_0 \|_{L^{ r}}+ C \int_0^t [\nu (t-s)]^{-1+\frac{\varepsilon}{2}} \|  f(s,\cdot)\|_{W^{\varepsilon,r}} ds +
	C 
	\int_0^{t} [\nu ( t-s)]^{-\frac 32 + \gamma}  
	\|\nabla u(s,\cdot)\|_{L^{2}}^{2(1-\alpha)}	\|u(s,\cdot)\|_{ L^{\infty}}^{2\alpha} ds.	 
\end{eqnarray*}
Since, by 
Sobolev embedding, we get

\begin{equation*}
	\|u\|_{L^\infty} \leq C  \|\nabla^2 u \|_{L^r} ^{\alpha'} \|u\|_{L^6}^{1-\alpha'},
\end{equation*}
with
\begin{equation*}
	0= (\frac{1}{r}-\frac 23) \alpha' + \frac{1-\alpha'}6
	\ \Leftrightarrow \alpha' = \frac{r}{5r-6}.
\end{equation*}
In other words, in such a case, by inequality $ \|u\|_{L^6} \leq C \|\nabla u\|_{L^2}$, there is an extra contribution of $\|\nabla u\|_{L^2}$. Therefore, to be able to use the $L^2_TL^2$ control  \eqref{ineq_D_NS_Leray}, we have to suppose that
\begin{equation*}
	2(1-\alpha)+ 2\alpha(1-\alpha')=2(1-\alpha\alpha')\leq 2,
\end{equation*}
which is always true.

%
We deduce,
\begin{eqnarray*}
	\|\nabla^2 u(t,\cdot)\|_{ L^r} 
	&\leq&
	\|\nabla ^2 u_0 \|_{L^{ r}}+ C \int_0^t [\nu (t-s)]^{-1+\frac{\varepsilon}{2}} \|  f(s,\cdot)\|_{W^{\varepsilon,r}}  ds
	\nonumber \\
	&& +
	C 
		\int_0^{t} [\nu ( t-s)]^{-\frac 32 + \gamma}  
	\|\nabla u(s,\cdot)\|_{L^{2}}^{2(1-\alpha\alpha')}	\|\nabla^2 u (s,\cdot)\|_{L^r} ^{2\alpha\alpha'}
	 ds.	 
\end{eqnarray*}

Consequently, from Lemma \ref{lemma_gronwall}, for $2\alpha\alpha '=2\frac{r-1}{5r-6} \in (0,1) $ (true for $ r > \frac 43$ or $r < 1$): 
%
\begin{eqnarray}\label{ineq_D2_u_avant_Hardy}
	\|\nabla^2 u(t,\cdot)\|_{ L^r} ^*
	&\leq&
\bigg(	\Big (\|\nabla ^2 u_0 \|_{L^{ r}}+ C \int_0^t  [\nu (t-s)]^{-1+\frac{\varepsilon}{2}} \|  f(s,\cdot)\|_{W^{\varepsilon,r}} ds
	\nonumber \\
	&& +
	C 
\int_0^{t} [\nu ( t-s)]^{-\frac 32 + \gamma}  
	\|\nabla u(s,\cdot)\|_{L^{2}}^{2(1-\alpha\alpha')}	
	   ds \Big )^{\frac{1}{1-2\alpha\alpha '}} \bigg )^*.	 
\end{eqnarray}
Hence, from Corollary \ref{corol_gronwall} (based on Hardy-Littlewood-Sobolev inequality)
\begin{eqnarray*}
	\|\nabla^2 u\|_{L^{\tilde r}_T L^r} 
	\leq
	C T^{\frac{1}{\tilde r}} \|\nabla ^2 u_0 \|_{L^{ r}}+ C T^\varepsilon \|   f\|_{L_T^{\tilde r}W^{\varepsilon,r}}  +
	C 
	\nu^{-\frac 32 + \gamma}  
	\|\nabla u\|_{L^{2(1-\alpha\alpha')r'}_TL^{2}}^{2(1-\alpha\alpha')(1-2\alpha\alpha ')}	, 	 
\end{eqnarray*}
with, $\frac{1}{ r'} + \frac 32-\gamma= 1 + \frac{1-2\alpha\alpha '}{\tilde r}$, to apply $L^2_TL^2$ control \eqref{ineq_D_NS_Leray}, we suppose that
\begin{equation*}
	r'= \frac{1}{1-\alpha\alpha'}= \frac{1}{1-\frac{r-1}{5r-6}}
	=\frac{5r-6}{4r-5},
\end{equation*} 
then
\begin{equation}
\frac{4r-5}{5r-6}+ \frac 12-\frac 1r=  \frac{1-2\frac{r-1}{5r-6}}{\tilde r}.
\end{equation}
This above identity is equivalent to
\begin{equation*}
	\tilde r = \frac{2r(3r-4)}{13r^2-26r+12}.
\end{equation*}
Let us remark that, the indexes of the Lebesgue norms in \eqref{ineq_D2_u_avant_Hardy} have
%
 to be greater than $1$, namely
\begin{equation*}
	\tilde r = \frac{2r(3r-4)}{13r^2-26r+12} \geq 1-2\alpha\alpha'= 1-2\frac{r-1}{5r-6}
	=\frac{3r-4}{5r-6}
\end{equation*}
in other words
\begin{equation*}
	(5r-6)2r=10r^2-12r >13r^2-26r+12 \ \Leftrightarrow 0>3r^2-14r+12 
\end{equation*}
which is true for
\begin{equation*}
	r \in (\frac{7-\sqrt{13}}{3}, \frac{7+\sqrt{13}}{3}) \approx (1.1315;3.5352) \supset [\frac 32,2).
\end{equation*}

\begin{remark}
	We could imagine a strategy with another interpolation 
	
	\begin{equation*}
		\|u(s,\cdot)\|_{L^\infty} \leq C  \|\nabla^2 u (s,\cdot)\|_{L^r} ^{\alpha'} \|u(s,\cdot)\|_{L^2}^{1-\alpha'},
	\end{equation*}
	with
	\begin{equation*}
		0= (\frac{1}{r}-\frac 23) \alpha' + \frac{1-\alpha'}2 \ \Leftrightarrow \alpha' = \frac{3r}{7r-6},
	\end{equation*}
	and $2\alpha\alpha '=2\frac{3(r-1)}{7r-6} \in (0,1)$,  for $ r \geq 1$.

	With the same computations as performed, the time Lebesgue space in the estimates becomes
	$L_T^{1-2\alpha\alpha'}=L_T^{1-2\frac{3(r-1)}{7r-6}}=L_T^{\frac{r}{7r-6}}$ which is weaker than the previous analysis as soon as  $r\geq \frac 32$ or $r \leq 1$ (in these cases ${\frac{3r-4}{5r-6}}\leq \frac{r}{7r-6} $).

\end{remark}

	$\bullet$ Cases $r\geq 2 $
	\\

In order to consider such big $r$, we change the interpolation analysis \eqref{ineq_interpol_nabla_omega1} by
\begin{equation}
	\|u(s,\cdot)\|_{W^{\gamma,2r}} \leq
	C 	\|u(s,\cdot)\|_{L^\infty}^{ \alpha} \|\nabla u(s,\cdot)\|_{L^p}^{1-\alpha},
\end{equation}
where we recall \eqref{Def_Alpha_Sobolev},
\begin{equation*}
	\alpha 	= \frac{2r-p}{2r} , \ \ 1-\alpha= \gamma 	= \frac{p}{2r},
\end{equation*}
and by Gagliardo-Nirenberg inequality
\begin{equation*}
	\|\nabla u(s,\cdot)\|_{L^p} \leq \|\nabla^2 u(s,\cdot)\|_{L^r}^{\tilde \alpha}\|\nabla u(s,\cdot)\|_{L^2}^{1-\tilde \alpha},
\end{equation*}
with
\begin{equation*}
	\frac{1}{p}= (\frac{1}{r}-\frac{1}{3})\tilde \alpha+ \frac{1-\tilde \alpha}{2}.
\end{equation*}
This identity equivalently write
\begin{equation*}
	\tilde \alpha=\frac{3r}{5r-6}	\frac{p-2}{p} \ \text{ and } 1-\tilde \alpha=\frac{2(rp+3r-3p)}{p(5r-6)}.
\end{equation*}

Let us recall the constraint $ 1\geq \gamma= \frac{p}{2r}>\frac{1}{2}$ meaning that 
\begin{eqnarray}\label{condi_D2_u_prime}
	2r \geq  p>r.
\end{eqnarray}
Similarly to Section \ref{sec_preuve_1},
we derive
\begin{eqnarray*}
		\|\nabla^2 u(t,\cdot)\|_{L^r} 
	&\leq&
	\| u_0 \|_{\dot B_{r,\tilde r}^{2\frac{\tilde r-1}{\tilde r}}}+ C 
	\|  f\|_{\dot B_{r,\infty}^{2}}  
	\nonumber \\
	&&+
	C 
	\int_0^{t} [\nu ( t-s)]^{-\frac 32 + \gamma}  
\|\nabla^2 u(s,\cdot)\|_{L^r}^{2(1-\alpha)\tilde \alpha}\|\nabla u(s,\cdot)\|_{L^2}^{2(1-\alpha)(1-\tilde \alpha)}	\|u(s,\cdot)\|_{ L^{\infty}}^{2\alpha} ds.	 
\end{eqnarray*}
Again by Sobolev embedding, we get
\begin{equation*}
	\|u\|_{L^\infty} \leq C  \|\nabla^2 u \|_{L^r} ^{\alpha'} \|u\|_{L^6}^{1-\alpha'},
\end{equation*}
with
\begin{equation*}
	\alpha' = \frac{r}{5r-6}.
\end{equation*}


We deduce,
\begin{eqnarray*}
	\|\nabla^2 u(t,\cdot)\|_{ L^r} 
	&\leq&
	\|\nabla ^2 u_0 \|_{L^{ r}}+ C \	\|  f\|_{\dot B_{r,\infty}^{2}}  
	\nonumber \\
	&& +
	C 
	\int_0^{t} [\nu ( t-s)]^{-\frac 32 + \gamma}  
\|\nabla^2 u(s,\cdot)\|_{L^r}^{2(1-\alpha)\tilde \alpha}\|\nabla u(s,\cdot)\|_{L^2}^{2(1-\alpha)(1-\tilde \alpha)}
\|\nabla^2 u \|_{L^r} ^{2\alpha \alpha'} \|u\|_{L^6}^{2\alpha(1-\alpha')}
	ds
	\nonumber \\
	&\leq&
	\|\nabla ^2 u_0 \|_{L^{ r}}+ C 	\|  f\|_{\dot B_{r,\infty}^{2}}  
	\nonumber \\
	&& +
	C 
	\int_0^{t} [\nu ( t-s)]^{-\frac 32 + \gamma}  
	\|\nabla^2 u(s,\cdot)\|_{L^r}^{2(1-\alpha)\tilde \alpha+2\alpha \alpha'}
	\|\nabla u(s,\cdot)\|_{L^2}^{2(1-\alpha)(1-\tilde \alpha)+2\alpha(1-\alpha')}
	ds
	.	 
\end{eqnarray*}
We have to suppose, in view of Lemma \ref{lemma_gronwall}, that
\begin{equation*}
	1 > 2(1-\alpha)\tilde \alpha+2\alpha \alpha'
=
	2(1-\frac{2r-p}{2r})\frac{3r}{5r-6}	\frac{p-2}{p}
	+2\frac{2r-p}{2r}\frac{r}{5r-6}
=
	\frac{2(p+r-3)}{5r-6}.
\end{equation*}
This is equivalent to (for $r \geq \frac 65$)
\begin{equation}\label{condi_D2_u_p_r}
	5r-6 > 2(p+r-3) \ \Leftrightarrow 3r > 2p \ \Leftrightarrow \frac{3}{2} r > p.
\end{equation}
Let us remark that the contribution of $\|\nabla u (s,\cdot)\|_{L^2}$ is
\begin{eqnarray*}
2(1-\alpha)(1-\tilde \alpha)+2\alpha(1-\alpha')
	&=&
	2(1-\frac{2r-p}{2r})(1-\frac{3r}{5r-6}	\frac{p-2}{p})+2\frac{2r-p}{2r}(1-\frac{r}{5r-6})
	\nonumber \\
		&=&
	\frac{8r-(2p+6)}{5r-6}.
\end{eqnarray*}
Then, from Lemma \ref{lemma_gronwall}, we derive
\begin{eqnarray*}
	\|\nabla^2 u(t,\cdot)\|_{ L^r} ^*
	&\leq&
\bigg(	\Big (
\big (	\|\nabla ^2 u_0 \|_{L^{ r}}+ C \int_0^t [\nu (t-s)]^{-1+\frac{\varepsilon}{2}} \|  f(s,\cdot)\|_{W^{\varepsilon,r}}   ds \big )^{1-2(1-\alpha)\tilde \alpha+2\alpha \alpha'}
	\nonumber \\
	&& +
	C 
	\int_0^{t} [\nu ( t-s)]^{-\frac 32 + \gamma}  
	\|\nabla u(s,\cdot)\|_{L^2}^{2(1-\alpha)(1-\tilde \alpha)+2\alpha\alpha'}
	ds
	\Big )^{\frac{1}{1-2(1-\alpha)\tilde \alpha+2\alpha \alpha'}} \bigg )^*
	.	 
\end{eqnarray*}
By Corollary \ref{corol_gronwall}, using Hardy-Littlewood-Sobolev inequality \eqref{ineq_Hardy_Littlewood_Sobolev}, 
\begin{eqnarray}\label{ineq_D2_u_Lr}
	\|\nabla^2 u\|_{L^{\tilde r}_T L^r} 
	&\leq&
	C
	T^{\frac{1}{\tilde r}}
	\big (	\|\nabla ^2 u_0 \|_{L^{ r}}+ C \int_0^t [\nu (t-s)]^{-1+\frac{\varepsilon}{2}} \|  f(s,\cdot)\|_{W^{\varepsilon,r}}  ds \big )
	\nonumber \\
	&&
	 +
	C  \nu ^{-\frac 32 + \gamma}   
	\|\nabla u(s,\cdot)\|_{L^{(2(1-\alpha)(1-\tilde \alpha)+2\alpha(1-\alpha'))r'}_TL^2}^{(2(1-\alpha)(1-\tilde \alpha)+2\alpha(1-\alpha'))(1-2(1-\alpha)\tilde \alpha+2\alpha \alpha')}
	,
\end{eqnarray}
with
\begin{equation*}
	\frac{1}{r'}+ \frac{1}{2}-\gamma= \frac{1-2(1-\alpha)\tilde \alpha+2\alpha \alpha'}{\tilde r}
.
\end{equation*}
Using \eqref{ineq_D_NS_Leray} requires that
\begin{equation*}
	r'= \frac{1}{(1-\alpha)(1-\tilde \alpha)+\alpha(1-\alpha')}
	= 	\frac{5r-6}{8r-(2p+6)},
\end{equation*}
then
\begin{equation*}
	\frac{8r-(2p+6)}{5r-6}+ \frac{1}{2}-\frac{p}{2r}= \frac{1-\frac{2(p+r-3)}{5r-6}}{\tilde r},
\end{equation*}
which is equivalent to
\begin{equation}\label{def_F3}
	\tilde r=	\frac{2r(3r-2p)}{21r^2-18r-9rp+6p}= : \tilde F_3(r,p).
\end{equation}
The denominator is non-negative if
\begin{equation*}
	p \leq \frac{r(7r-6)}{3r-2},
\end{equation*}
which is true because $\frac{r(7r-6)}{3r-2}\geq \frac 32$ since $r \geq \frac{\sqrt{105}+21}{28} \approx 1.12 < 2$. 
In other words, the above inequality is induced by \eqref{condi_D2_u_p_r}.

Let us differentiate $\tilde F_3(r,p)$ w.r.t. $p$,
\begin{equation*}
	\partial_p\tilde F_3(r,p)= - \frac{-2r^2(5r-6)}{3(7r^2-3pr-6r+2p)^2}<0.
\end{equation*}
That is to say that we have, as $p>r$ (see \eqref{condi_D2_u_prime}),
\begin{equation*}
	\tilde r < \tilde F_3(r,r)=	\frac{2r(3r-2r)}{21r^2-18r-9r^2+6r} =
	\frac{r}{6(r-1)} \ \Leftrightarrow 	\frac 1{\tilde r}+ \frac{6}{r} >6 .
\end{equation*}
Importantly, let us remark that the indexes of the Lebesgue norms in time in \eqref{ineq_D2_u_Lr}
have to be greater than $1$, which means that we have to impose that 
\begin{equation*}
	\frac{2r(3r-2p)}{21r^2-18r-9rp+6p}=	\tilde r \geq 1-2(1-\alpha)\tilde \alpha+2\alpha \alpha' =1-\frac{2(p+r-3)}{5r-6}=\frac{3r-2p}{5r-6}, 
\end{equation*}
which is compatible with \eqref{def_F3} if
\begin{equation*}
	-11r^2 + (9p+6) r -6p \geq 0 \Leftrightarrow p \geq \frac{r(11r-6)}{9r-6}.
\end{equation*}
Because $p \mapsto \tilde F_3(r,p)$ is a non-increasing function, we pick 
$p = \frac{11r^2-6r}{9r-6}$.
 This choice yields
\begin{equation*}
	\tilde r = \tilde F_3(r,\frac{11r^2-6r}{9r-6})= \frac{r}{9r-6},
\end{equation*}
which is equivalent to 
\begin{equation*}
	\frac{1}{\tilde r}+ \frac{6}{r}=9.
\end{equation*}
Let us observe that this choice of $p$ is compatible with \eqref{condi_D2_u_p_r}: $\frac{3}{2} r > p=\frac{11r^2-6r}{9r-6}$, which is granted for $r > \frac 32$.


\end{proof}

\mysection{Gathering interpolations}
\label{sec_interpol_finale}

In this section, we enhance the result of Theorem \ref{THEO_2}.
We perform interpolations and Sobolev embedding, based on with condition \eqref{VAsseur}, from \cite{cons:91}.

\subsection{Second derivatives}
\label{sec_interpol_D2}

Let us precise some interpolations for the second derivatives.
Namely, we gather all of these results in the below graphic, we draw the links between $r$ and $\tilde r$ in the above statement: $\nabla^2 u \in L^{\tilde r}([0,T], L^r(\R^3,\R^3))$.

\begin{figure}[h!]
	\begin{center}
		\includegraphics[scale=1]{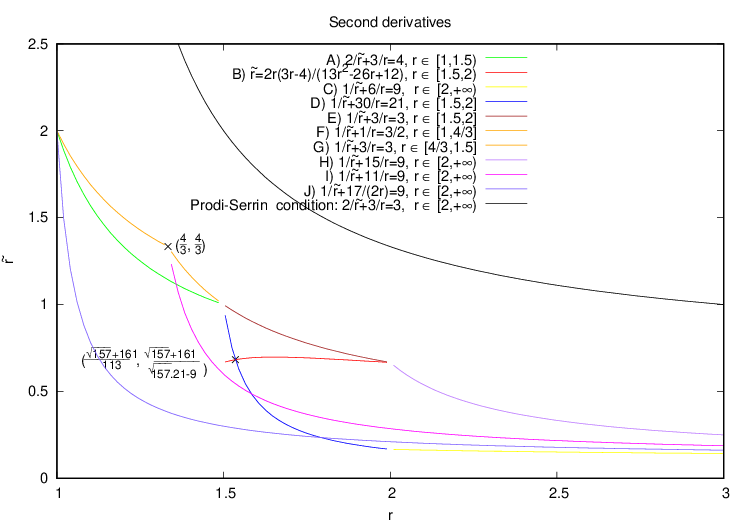}
		\caption{$\nabla^2 u \in L^{\tilde r}([0,T], L^r(\R^3,\R^3))$}
		\label{D2_final}
	\end{center}
\end{figure}

\begin{cor}\label{Coro_D2}
	Let $u \in L^2([0,T], H^1(\R^3,\R^3)) \cap L^\infty([0,T], L^2(\R^3,\R^3))$ a Leray solution of \eqref{Navier_Stokes_equation_v2}, and if  $(r,\tilde r) \in (1,+ \infty)$,
	$u_0 \in  L^2(\R^3,\R^3) \cap \dot  B_{r,\tilde r}^{\frac{2(\tilde r-1)}{\tilde r}}(\R^3,\R^3)$
	and $f \in L^\infty([0,T], L^2(\R^3,\R^3)) \cap L^\infty([0,T], \dot  B_{r,\infty}^{2}(\R^3,\R^3))$
	then $\|\nabla ^2 u\|_{L^{\tilde r}_TL^r}<+ \infty $
	as soon as
	\begin{equation*}
		\begin{cases}
			\frac{2}{\tilde r} +\frac{2}{r}>3,
			\ \text{ if } r< \frac 43, \ \text{Curve F) }
			\\
			\frac{1}{\tilde r} +\frac{3}{r}>3,
			\ \text{ if } r \in (\frac 43,\frac 12) \ \text{Curve G) }
			\\
				\frac{1}{\tilde r} +\frac{3}{r}=3,
			\ \text{ if } r \in [\frac 32,2] \ \text{Curve E)}
			\\
			\frac{1}{\tilde r}+ \frac{15}{r}=9, \ \text{ if }  2\leq  r<+ \infty \ \text{Curve H) }.
		\end{cases}
	\end{equation*}
\end{cor}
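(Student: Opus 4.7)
The corollary is a pure bookkeeping consequence of combining the four $k=2$ regimes of Theorem \ref{THEO_2} with the Constantin estimate $\nabla^2 u\in L^{4/3-\varepsilon}_TL^{4/3-\varepsilon}$ (which, via \eqref{VAsseur} at $k=2$, provides the limit landmark $(r,\tilde r)=(4/3,4/3)$), and applying the double interpolation identity \eqref{eq_double_interpol} from Section \ref{sec_Interpol_Lebesgue}. I first extract five landmark controls on $\|\nabla^2u\|_{L^{\tilde r}_TL^r}$:
\begin{itemize}
\item $P_1=(1,2)$ and $P_3=(3/2,1)$, the two endpoints of the first branch $2/\tilde r+3/r=4$ of Theorem \ref{THEO_2};
\item $P_2=(4/3,4/3)$, obtained as a limit of Constantin's global bound;
\item $P_4=(2,2/3)$, the common value at the two endpoints $r=3/2$ and $r=2$ of the second branch $\tilde r=\frac{2r(3r-4)}{13r^2-26r+12}$;
\item $P_5=(\infty,1/9)$, the limit as $r\to\infty$ along the third branch $1/\tilde r+6/r=9$.
\end{itemize}

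For each consecutive pair $(P_i,P_{i+1})$ I apply the double Hölder inequality recalled in Section \ref{sec_Interpol_Lebesgue}: if $(p,\tilde p)=P_i$ and $(q,\tilde q)=P_{i+1}$, then for every $\alpha\in[0,1]$ the point $(r,\tilde r)$ determined by $1/r=\alpha/p+(1-\alpha)/q$ and $1/\tilde r=\alpha/\tilde p+(1-\alpha)/\tilde q$ satisfies
\[
\|\nabla^2u\|_{L^{\tilde r}_TL^r}\leq C\,\|\nabla^2u\|_{L^{\tilde p}_TL^p}^{\alpha}\,\|\nabla^2u\|_{L^{\tilde q}_TL^q}^{1-\alpha}<+\infty.
\]
A direct computation then yields the four advertised equality lines: $(P_1,P_2)$ gives $2/\tilde r+2/r=3$ on $r\in[1,4/3]$ (Curve F); $(P_2,P_3)$ gives $1/\tilde r+3/r=3$ on $r\in[4/3,3/2]$ (Curve G); $(P_3,P_4)$ gives the same equation $1/\tilde r+3/r=3$ on $r\in[3/2,2]$ (Curve E); finally $(P_4,P_5)$ gives $1/\tilde r+15/r=9$ on $r\in[2,+\infty)$ (Curve H). In each case the range of $r$ is precisely the image of $[0,1]$ under the map $\alpha\mapsto(\alpha/p+(1-\alpha)/q)^{-1}$.

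The equality/inequality dichotomy in the statement is now easy to read off. Since $[0,T]$ has finite measure, the inclusion $L^{\tilde r_1}_T\hookrightarrow L^{\tilde r_2}_T$ holds for $\tilde r_2\leq\tilde r_1$; hence any equality line automatically extends to the region of smaller $\tilde r$. On curves F and G the landmark $P_2$ is only accessed as the limit of the Constantin controls at exponent $4/3-\varepsilon$, so the interpolation line itself is only approached asymptotically, forcing the strict inequality ``$>$''; on curves E and H the two endpoints come only from Theorem \ref{THEO_2} and are genuinely attained, so equality is reached.

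The main obstacle, mild but real, is the handling of the limit endpoint $P_5=(\infty,1/9)$: there is no $L^\infty_x$ control of $\nabla^2 u$ to interpolate against. I bypass this by pairing $P_4$ not with $P_5$ itself but with a generic point $(r_0,\tilde r_0)$ on the third branch of Theorem \ref{THEO_2} satisfying $1/\tilde r_0+6/r_0=9$, with $r_0$ large but finite, and then passing to the limit $r_0\to+\infty$: the interpolation line $\alpha/2+(1-\alpha)/r_0=1/r$, $3\alpha/2+(1-\alpha)(9-6/r_0)=1/\tilde r$ converges pointwise to $1/\tilde r+15/r=9$, which is exactly Curve H.
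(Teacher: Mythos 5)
Your argument is essentially the paper's own proof: the same landmark exponent pairs taken from the three $k=2$ branches of Theorem \ref{THEO_2} and from the Constantin--Lions--Vasseur point $(\frac{4}{3},\frac{4}{3})$ of \eqref{VAsseur}, combined pairwise via the double interpolation identity \eqref{eq_double_interpol}, with the strict inequalities on curves F and G attributed, exactly as in the paper, to the fact that $(\frac{4}{3},\frac{4}{3})$ is only reached as a limit. The only difference is your limiting device $r_0\to+\infty$ for curve H, which is cosmetic (strictly speaking it yields the region below the line, just as the paper's formal interpolation with the unattained endpoint $(+\infty,\frac{1}{9})$ does), so the proposal coincides with the paper's proof.
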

The equality case $(r,\tilde r) = (\frac 43, \frac 43)$ is not proved yet. The work of Lions \cite{lion:96} and Vasseur et al. \cite{vass:10}, \cite{vass:yang:21} get closer to this equality.

\begin{proof}[Proof of Corollary \ref{Coro_D2} and interpolation Figure \ref{D2_final}]
	The curves A), B) and C) are already proved by Theorem \ref{THEO_2}.


\begin{trivlist}
	\item[$\star$ Curve D)] 
	By interpolation, from $\nabla^2 u \in L^{1^+}([0,T],L^{\frac 32^-}(\R^3,\R^3))$ and $\nabla^2 u \in L^{\frac 16}([0,T],L^{2}(\R^3,\R^3))$ we compute\footnote{See \eqref{eq_double_interpol} further.} that $\nabla^2 u \in L^{\tilde r}([0,T],L^{r}(\R^3,\R^3))$ with
	\begin{equation*}
		\frac{1}{\tilde r}+ \frac{30}{r}=21, \ r \in (\frac 32,2],
	\end{equation*}
	which equivalently writes $ \tilde r = \frac{r}{21r-30}$.
	
	We have $\frac{r}{21r-30} \geq \frac{2r(3r-4)}{13r^2-26r+12} \ \Leftrightarrow -113r^2+322r-228 \geq 0 \ \Leftrightarrow r \in [\frac{\sqrt{157}-161}{113}, \frac{\sqrt{157}+161}{113}] \approx [1.3139,1.5357]$.
	
	In the following we set $$r_0:= \frac{\sqrt{157}+161}{113}.$$
	\item[$\star$ Curve E)] We interpolate between the curve A) at the point $(1.5,1)$ and the end of the curve B) at the point $(2,2/3)$, thanks to identity \eqref{eq_double_interpol}
\begin{equation}\label{curve_E}
	\tilde r 
	= \frac{r/3}{r/2-3+3r/2+2-r}
	= \frac{r}{3(r-1)} \ \Leftrightarrow \frac{1}{\tilde r}+ \frac{3}{r}= 3.
\end{equation}

As we can see the best result is at the point $(\frac{4}{3},\frac 43)$ given by \eqref{VAsseur}, let us interpolate from this point.

\item[$\star$ Curve F)] We interpolate from $(1,2)$ to $(\frac{4}{3},\frac 43)$ thanks to identity \eqref{eq_double_interpol}:
\begin{equation*}
	\tilde r 
	\nonumber \\
	< \frac{2r}{3r-2}
	\ \Leftrightarrow \frac{1}{\tilde r}+ \frac{1}{r}> \frac 32.
\end{equation*}
We surprisingly obtain the same identity as in the curve E) \eqref{curve_E}.

\item[$\star$ Curve G)] We interpolate from $(\frac{4}{3},\frac 43)$ to $(\frac 32,1)$ thanks to identity \eqref{eq_double_interpol} 
\begin{eqnarray*}
	\tilde r
	< \frac{r}{3(r-1)} \ \Leftrightarrow \frac{1}{\tilde r}+ \frac{3}{r}> 3.
\end{eqnarray*}

	\item[$\star$ Curve H)]
We interpolate from $(2,2/3)$ to the end of curve C) $(+ \infty,1/9)$ thanks to
\eqref{eq_double_interpol}:
\begin{equation*}
	\tilde r 
	= \frac{r}{9r-15}
	\ \Leftrightarrow 
	\frac{1}{\tilde r}+ \frac{15}{r}= 9.
\end{equation*}

	\item[$\star$ Curve I)] We interpolate from  $(4/3,4/3)$ to the end of curve C) $(+ \infty,1/9)$ thanks to
	\eqref{eq_double_interpol}:
	\begin{equation*}
		\tilde r 
		=  \frac{r}{9r-11}
		\ \Leftrightarrow 
		\frac{1}{\tilde r}+ \frac{11}{r}= 9.
	\end{equation*}
	\item[$\star$ Curve J)] We interpolate from   $(1,2)$ to the end of curve C) $(+ \infty,1/9)$ thanks to
	\eqref{eq_double_interpol}:
\begin{equation*}
	\tilde r 
	=  \frac{2r}{18r-17}
	\ \Leftrightarrow 
	\frac{1}{\tilde r}+ \frac{17}{2r}= 9.
\end{equation*}

%


\end{trivlist}
\end{proof}

\subsection{First derivatives}
We can also perform the same reasoning for $\nabla u$, directly from Corollary \ref{Coro_D2} and Sobolev embedding
\begin{cor}\label{CORO_2_bis}
	
	%
	Let $u \in L^2([0,T], H^1(\R^3,\R^3)) \cap L^\infty([0,T], L^2(\R^3,\R^3))$ a Leray solution of \eqref{Navier_Stokes_equation_v2}, if  for any $(r,\tilde r) \in (1,+ \infty)$,
	$u_0 \in  L^2(\R^3,\R^3) \cap  W^{1,r}(\R^3,\R^3)$ and $f \in L^\infty([0,T], L^2(\R^3,\R^3)) $ such that $\|\nabla  f\|_{L^{\tilde r}_TL^r}<+ \infty $
	then $\|\nabla  u\|_{L^{\tilde r}_TL^r}<+ \infty $
	as soon as
	\begin{equation*}
		\begin{cases}
		\frac 1{\tilde r}+ \frac{3}{r} =2, \ \text{ if } r \in (2,6],
			\\
			\frac{1}{\tilde r}+ \frac{15}{r}= 4,\ \text{if } r \in (6,+ \infty).
		\end{cases}
	\end{equation*} 
	
\end{cor}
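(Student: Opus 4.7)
The plan is to deduce the corollary as a direct consequence of Corollary \ref{Coro_D2} via the classical Sobolev embedding $W^{2,r_0}(\R^3)\hookrightarrow W^{1,r}(\R^3)$, valid for $1<r_0<3$ with $\frac{1}{r}=\frac{1}{r_0}-\frac{1}{3}$. Integrating this pointwise-in-time estimate in $L^{\tilde r}([0,T])$ gives $\|\nabla u\|_{L^{\tilde r}_TL^r}\leq C\|\nabla^2 u\|_{L^{\tilde r}_TL^{r_0}}$ with the \emph{same} time-integrability index $\tilde r$, so one only needs to match the two spatial exponents along each branch of Corollary \ref{Coro_D2}.

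For $r\in[3,6]$, I would apply this embedding to Curve E of Corollary \ref{Coro_D2}, which asserts $\nabla^2 u\in L^{\tilde r}_TL^{r_0}$ whenever $\frac{1}{\tilde r}+\frac{3}{r_0}=3$ and $r_0\in[\tfrac{3}{2},2]$. Substituting $\frac{3}{r_0}=\frac{3}{r}+1$ transforms the identity into $\frac{1}{\tilde r}+\frac{3}{r}=2$, and the range $r_0\in[\tfrac{3}{2},2]$ maps bijectively onto $r\in[3,6]$.

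For $r\in(6,+\infty)$, I would likewise apply the Sobolev embedding to Curve H of Corollary \ref{Coro_D2}, restricting the second-derivative exponent to $r_0\in[2,3)$ so that the embedding constraint $r_0<3$ is satisfied. The equality $\frac{1}{\tilde r}+\frac{15}{r_0}=9$ combined with $\frac{15}{r_0}=\frac{15}{r}+5$ yields $\frac{1}{\tilde r}+\frac{15}{r}=4$, and the corresponding range on $r$ is $[6,+\infty)$.

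The remaining sub-range $r\in(2,3]$ lies below the scope of Corollary \ref{Coro_D2} via Sobolev embedding (since it would require $r_0<\tfrac{3}{2}$, where Curve E no longer applies directly). However, Lemma \ref{Lemma_omega_LrLr} already covers this range: it states exactly $\frac{1}{\tilde r}+\frac{3}{r}=2$ for $r\in[2,3]$. Gluing the three contributions and checking the compatibility at the interfaces $r=3$ (both give $\tilde r=1$) and $r=6$ (both give $\tilde r=\tfrac{2}{3}$) yields the full statement. The only obstacle worth flagging is the bookkeeping on hypotheses: the Besov assumption $u_0\in\dot B^{2(\tilde r-1)/\tilde r}_{r_0,\tilde r}$ and $f\in L^\infty_T\dot B^2_{r_0,\infty}$ from Corollary \ref{Coro_D2} should be shown to be implied by the simpler hypotheses $u_0\in W^{1,r}$ and $\nabla f\in L^{\tilde r}_TL^r$ of the present corollary, which follows by embedding Besov/Sobolev spaces at the level of data; this is routine but must be stated to keep the chain of implications self-contained.
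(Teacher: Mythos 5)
Your proposal is correct and follows essentially the same route as the paper: the paper's proof of this corollary likewise obtains the branch $\frac{1}{\tilde r}+\frac{3}{r}=2$ on $[3,6]$ by the Gagliardo--Nirenberg--Sobolev embedding $\|\nabla u(t,\cdot)\|_{L^r}\leq C\|\nabla^2 u(t,\cdot)\|_{L^{r_0}}$, $\frac 1r=\frac{1}{r_0}-\frac 13$, applied to Curve E of Corollary \ref{Coro_D2} (its ``Curve I''), the branch $\frac{1}{\tilde r}+\frac{15}{r}=4$ on $(6,\infty)$ from Curve H of Corollary \ref{Coro_D2} (its ``Curve J''), and the range $(2,3]$ directly from the $k=1$ case of Theorem \ref{THEO_2}, i.e.\ Lemma \ref{Lemma_omega_LrLr}. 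Your remark on reconciling the Besov hypotheses of Corollary \ref{Coro_D2} with the $W^{1,r}$ hypotheses stated here is a fair point that the paper itself glosses over.
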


\begin{figure}[h!]
	\begin{center}
		\includegraphics[scale=1]{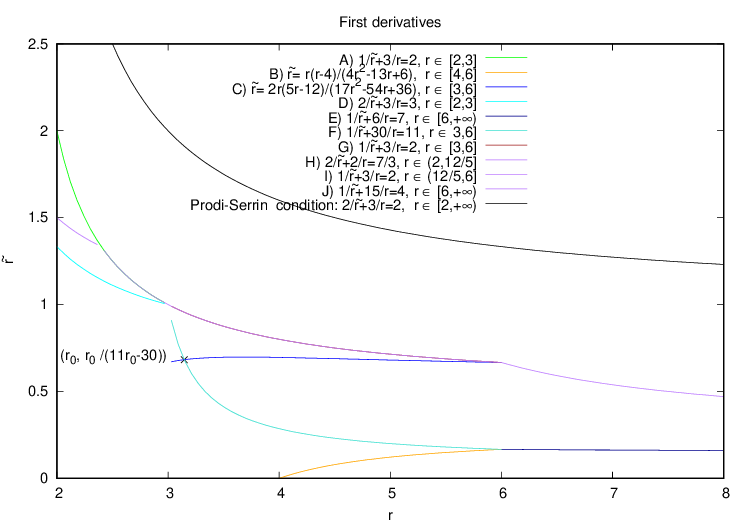}
		\caption{$\nabla u \in L^{\tilde r}([0,T], L^r(\R^3,\R^3))$}
		\label{Du}
	\end{center}
\end{figure}

\begin{proof}[Proof of Corollary \ref{CORO_2_bis} and interpolation Figure \ref{Du}]
		The curves A) and B)
		are already proved by Theorem \ref{THEO_2}.	
		\\
		
			By  Gagliardo-Nirenberg interpolation, we can derive the case $k=1$ from the case $k=2$, indeed the corresponding Lebesgue space $L^{r_1}$ for $k=1$ matches with the Lebesgue space $L^{r_2}=L^{\frac{3r_1}{3+r_1}}$ for $k=2$.
		
		\begin{trivlist}
			\item[$\star$ Curve C)]  We get for any $r_2 \in [\frac 32,2)$ $\Leftrightarrow$ 
			$r_1 \in [3,6)$, the corresponding relation from Theorem \ref{THEO_2},
			\begin{equation*}
			\frac{2\frac{3r}{3+r}(3\frac{3r}{3+r}-4)}{13(\frac{3r}{3+r})^2-26\frac{3r}{3+r}+12}= \frac{2r(5r-12)}{17r^2-54r+36}.
			\end{equation*}
\item[$\star$ Curve D)]  The corresponding case for $k=2$ is $r_2= (\frac{1}{r_1}+\frac 13)^{-1}< \frac 32$ $\Leftrightarrow$ $r_1< 3$ and,
\begin{equation*}
	\frac{2}{\tilde r}+ 3(\frac{1}{r_1}+\frac{1}{3})=4 \Leftrightarrow \tilde r =\frac{2r_1}{3(r_1-1)} .
\end{equation*}
\item[$\star$ Curve E)] If $r_1 >6$, the corresponding case for $k=2$ is $r_2 > 2$ and from Theorem \ref{THEO_2},
\begin{equation*}
	\frac 1 {\tilde r}+ 6(\frac{1}{r_1}+\frac 13)= 9 \Leftrightarrow 	\frac 1 {\tilde r}+ \frac{6}{r_1}= 7.
\end{equation*}
\item[$\star$ Curve F)] The corresponding case for $k=2$ is $r_2 \in [\frac 32, 2)$ $ \Leftrightarrow $ $r_1 \in [3, 6)$, 
\begin{eqnarray*}
	\frac{1}{\tilde r} +\frac{30}{r_2}=21 \Leftrightarrow \tilde r = \frac{r_1}{11r_1-30}.
	\end{eqnarray*}
Let us remark that $ r_2 \in [\frac 32,\frac{\sqrt{157}+161}{113}) \Leftrightarrow r_1 \in [3,\frac{3(\sqrt{157}+161)}{178-\sqrt{157}})$.

\item[$\star$ Curve G)]
For any $ 3 \leq r \leq 6 $,  we interpolate the curve A)  at $(3,1)$ to 
the curve C) at $(6,\frac 23)$, and from \eqref{eq_double_interpol} we get
%
\begin{equation*}
	\frac{1}{\tilde r} + \frac{3}{r}=2.
\end{equation*}
Let us remark that we have $\frac{r_1}{11r_1-30} \geq \frac{r_1}{5r_1-9} $ $ \Leftrightarrow $ $r_1 \leq \frac{7}{2}>\frac{3(\sqrt{157}+161)}{178-\sqrt{157}}(\approx 3.1461) = r_0$.

Next, we derive from Sobolev embedding and Corollary \ref{Coro_D2},
%
\item[$\star$ Curve H)]
$			\frac{2}{\tilde r} +2(\frac{1}{r}+\frac 13)=3 \ \Leftrightarrow \frac{2}{\tilde r}+ \frac{2}{r}= \frac{7}{3}$,
			if  $\frac 1r> \frac 34-\frac{1}{3}=\frac{5}{12}$
			
			\item[$\star$ Curve I)]
			$\frac{1}{\tilde r} +3(\frac{1}{r}+ \frac 13)=3 \ \Leftrightarrow \frac{1}{\tilde r}+ \frac{3}{r}= 2$,
			if $r \in (\frac{5}{12},6] $
		\item[$\star$ Curve J)]
			$\frac{1}{\tilde r}+ 15(\frac{1}{r}+ \frac{1}{3})=9 \ \Leftrightarrow \frac{1}{\tilde r}+ \frac{15}{r}= 4$, 
	\end{trivlist}
%
%
	Gathering the upper curves in the current proof yields the corollary.
\end{proof}

Eventually, there is no need to perform again interpolation to  extend the analysis to $ u$.
Indeed, the only possible case from Corollary \ref{CORO_2_bis} gives, Sobolev embedding,  $\frac 1{\tilde r}+ 3(\frac{1}{r}+ \frac{1}{3}) =2 \Leftrightarrow \frac{1}{\tilde r} + \frac{3}{r}= 1$, for $r \in (6,+ \infty)$. This exactly matches identity associated with $k=0$, in Theorem \ref{THEO_2}.

\mysection{Weighted Lebesgue space in time}
\label{sec_weight_lebesgue}

In the analysis of the solution of \eqref{Navier_Stokes_equation_v2}, in a mild formulation,  
a control of time integral with singularity might be directly useful.


\begin{THM}\label{THEO_2_bis}
	
	%
	Let $u \in L^2([0,T], H^1(\R^3,\R^3)) \cap L^\infty([0,T], L^2(\R^3,\R^3))$ a Leray solution of \eqref{Navier_Stokes_equation_v2}, if  for all $k \in \leftB 0, 2 \rightB$, $\theta <1$, and $r \in (1,+ \infty)$,
	$u_0 \in  L^2(\R^3,\R^3) \cap  W^{k,r}(\R^3,\R^3)$ and $f \in L^\infty([0,T], L^2(\R^3,\R^3)) $ such that
	\begin{equation*}
		\int_0^T (T-t)^{-\theta}\|\nabla ^k f(t,\cdot)\|_{L^r} dt<+ \infty,
	\end{equation*}
	then
	\begin{equation}\label{ineq_THEO_k2}
		\int_0^T (T-t)^{-\theta} \|\nabla ^ku(t,\cdot)\|_{L^r} dt <+ \infty,
	\end{equation}
	if \begin{equation}\label{condi_theta}
		\theta < \frac{3-kr}{2r}
	\end{equation} 
	and if $r < \frac{3}{k}$ (with $r<+ \infty$ for $k=0$).

	Also, if $k=0$ and $r  \in [3,+ \infty)$, or $k \in \{1,2\}$, we can suppose that $\theta \leq  \frac{3-kr}{2r}$.
	

	
\end{THM}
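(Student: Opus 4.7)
The plan is to mimic the case-by-case analysis of Theorem \ref{THEO_2} for $k\in\{0,1,2\}$, replacing the $L^{\tilde r}_T$ temporal integrability by the singular weighted integral $\int_0^T (T-t)^{-\theta}\,\cdot\,dt$. All pointwise-in-$t$ ingredients -- the Duhamel formulae \eqref{ineq_Lr_u}, \eqref{Duhamel_u_FINAL}, \eqref{Duhamel_nabla_omega}, the Gagliardo--Nirenberg and Sobolev interpolations, and Leray's energy estimate \eqref{ineq_D_NS_Leray} -- remain unchanged, so that for each $k$ we reach an inequality of the form
\begin{equation*}
\|\nabla^k u(t,\cdot)\|_{L^r}\leq A_k(t)+C\int_0^t (t-s)^{-1+\gamma_k}\psi_k(s)\,\|\nabla^k u(s,\cdot)\|_{L^r}^{\beta_k}\,ds,
\end{equation*}
where $\gamma_k$ is the exponent identified in the proof of Theorem \ref{THEO_2}, $\psi_k\in L^1_T$ comes from \eqref{ineq_D_NS_Leray} after the interpolation, and $\beta_k\in[0,1)$. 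The Bihari--LaSalle Lemma \ref{lemma_gronwall} then yields a bound $(\|\nabla^k u(t,\cdot)\|_{L^r})^*\leq K_k^*(t)$ with $K_k$ built from $A_k$, $\psi_k$ and the kernel $(t-s)^{-1+\gamma_k}$.

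The key new ingredient is the Fubini--Beta identity
\begin{equation*}
\int_0^T (T-t)^{-\theta}\!\int_0^t (t-s)^{-1+\gamma_k}\psi_k(s)\,ds\,dt = B(1-\theta,\gamma_k)\int_0^T (T-s)^{\gamma_k-\theta}\psi_k(s)\,ds,
\end{equation*}
valid for $\theta,\gamma_k\in(0,1)$. Combined with the observation that $t\mapsto (T-t)^{-\theta}$ is non-decreasing on $[0,T)$ and hence coincides with its own non-decreasing rearrangement, the Hardy--Littlewood inequality \eqref{ineq_Hardy_Littlewood_inequality} lets us dispense with the rearrangement symbol $^*$ produced by Lemma \ref{lemma_gronwall}. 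Matching the exponents as in the corresponding case of the proof of Theorem \ref{THEO_2} translates the integrability condition $(T-s)^{\gamma_k-\theta}\in L^\infty_T$ (or $\in L^q_T$ for a suitable $q$) into the threshold $\theta<(3-kr)/(2r)$ announced in \eqref{condi_theta}. The equality $\theta=(3-kr)/(2r)$ is admissible precisely when the inner integrand $\psi_k$ lies in a slightly better Lebesgue space than required, a margin that exists for $k\in\{1,2\}$ directly and for $k=0$ only when $r\geq 3$ (since in that range the interpolation parameter $\alpha$ puts a subcritical exponent on $\|u\|_{L^6}^2\in L^1_T$).

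The main obstacle is the power $1/(1-\beta_k)$ arising from the Bihari--LaSalle lemma when $\beta_k>0$, which obstructs a direct Fubini application on the weighted integral. For $k=0$ this is circumvented by the choice $q=3$ (so $2q=6$) in the interpolation of Lemma \ref{Lemme_u_L_r_Lr}, giving $\beta_0=0$ and reducing everything to the linear Fubini computation above. For $k\in\{1,2\}$ we argue as in the proof of Theorem \ref{THEO_2}: use H\"older's inequality to split $\int_0^T (T-t)^{-\theta}K_k^*(t)\,dt\leq \|(T-\cdot)^{-\theta}\|_{L^{p'}_T}\|K_k\|_{L^p_T}$ with $\theta p'<1$, and bound $\|K_k\|_{L^p_T}$ by Corollary \ref{corol_gronwall} applied to the inner integral, optimizing the free interpolation parameter exactly as in the proofs of Lemmas \ref{Lemma_omega_LrLr} and \ref{Lemme_T_theta_nabla2_u_Lr}. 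Matching the resulting two exponent constraints gives the announced range $\theta<(3-kr)/(2r)$ and, in the cases listed in the statement, the borderline case $\theta=(3-kr)/(2r)$.
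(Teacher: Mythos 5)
There is a genuine gap, and it comes from keeping the Bihari--LaSalle machinery of Theorem \ref{THEO_2} for $k\in\{1,2\}$ (and implicitly for $k=0$, $r<3$) and then paying a H\"older inequality against the weight $(T-t)^{-\theta}$. The paper's proof does the opposite: it explicitly uses neither Lemma \ref{lemma_gronwall} nor \eqref{ineq_Hardy_Littlewood_Sobolev}. The point of the restriction $r<3/k$ is precisely that the interpolations can be arranged so that \emph{no} self-referential term $\|\nabla^k u(s,\cdot)\|_{L^r}^{\beta_k}$ appears at all: for $k=0$ one interpolates $\|u\|_{L^{2q}}$ between $L^2$ and $L^6$ (see \eqref{ineq_u_L2q}), for $k=1$ one uses $\|u\|_{W^{\gamma,2r}}\lesssim\|u\|_{L^6}^{\alpha}\|\nabla u\|_{L^2}^{1-\alpha}$, and for $k=2$ one uses \eqref{ineq_interpol_nabla_omega1_bis}, so the right-hand side only involves energy-controlled quantities ($\beta_k=0$ in your notation, for \emph{all} three cases). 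Then the weight is absorbed by Fubini and Proposition \ref{Prop_integ_time}, i.e. $\int_s^T(T-t)^{-\theta}(t-s)^{-1+\gamma}dt\lesssim (T-s)^{\gamma-\theta}$ (your Beta identity), followed by a plain H\"older in $s$ against \eqref{ineq_D_NS_Leray}. This is what yields the full range $r\in(1,3/k)$ and the borderline cases $\theta=\frac{3-kr}{2r}$, since $(T-s)^{\gamma-\theta}$ stays bounded at $\theta=\gamma$.

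Your route cannot reach the stated thresholds on a substantial part of the range. Concretely, for $k=1$ and $r\in(1,2)$ the exponent matching in the proof of Lemma \ref{Lemma_omega_LrLr} forces $\tilde r=1$ (the paper obtains only $\|\nabla u\|_{L^1_TL^r}$ for $1<r\le3$, and the interpolation with $L^2_TL^2$ improves this only for $r\in[2,3]$); H\"older of $(T-t)^{-\theta}$ against an $L^1_T$ bound requires $\theta\le 0$, nowhere near the claimed $\theta\le\frac{3-r}{2r}$, which is close to $1$ as $r\to1$. Similarly for $k=0$ and $r\in(1,2)$ there is no unweighted $L^{\tilde r}_TL^r$ control in the paper to H\"olderize from (Theorem \ref{THEO_2} starts at $r=2$, and the Bihari--LaSalle computation caps $\tilde r$ at $2$), so you cannot exceed $\theta<\frac12$, while the theorem claims $\theta<\min(1,\frac{3}{2r})$. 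Finally, H\"older against $(T-\cdot)^{-\theta}$ can never produce the equality cases $\theta=\frac{3-kr}{2r}$, because $\|(T-\cdot)^{-\theta}\|_{L^{p'}_T}$ blows up exactly at the critical exponent; only the direct weighted computation gives them. (Your $k=0$, $q=3$ Fubini argument and your $k=2$, $r<\frac32$ H\"older bookkeeping do recover the correct thresholds in those sub-ranges, but the missing ranges and endpoints above are where the paper's avoidance of the rearrangement lemma is essential.)
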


\begin{remark}
	However, by replacing suitably the parameters $k=1$, $r=2$, $\theta<\frac 14$, we get  the constraint $\theta<\frac 12$ which is
	a poorer result than the Leray estimates 
	\eqref{ineq_D_NS_Leray}.
	By a homogeneity argument, there is no hope to get substantially better estimates without getting ride of the bilinear contribution $\P[u\cdot\nabla u]$ (disappearing with the energy method).

	We can postulate that for any $k \in \N$, 
	\begin{equation*}
		\int_0^T (T-t)^{-\theta} \|\nabla ^ku(t,\cdot)\|_{L^r} dt <+ \infty,
	\end{equation*}
	if $\theta < \min (1,\frac{3-2k}{2r})$, which is negative if $k\geq 3$.
	
	The main problem to by-product our analysis in order to prove this result, is how to handle with a time singularity of order  $-2+\gamma$, with $\gamma \in [0,1]$.
	\\
	
	Another difficulty is to find the appropriate Lebesgue space $L^r(\R^3,\R^3)$ in a such case.
	For instance, it seems that we should consider $r <1$, for $k=3$, to obtain the same results as in Theorem \ref{THEO_2_bis} by Gagliardo-Nirenberg estimates in three dimensions. 
	
	
\end{remark}

The analysis is similar to the one performed for Theorem \ref{THEO_2_bis}, the main change lies in the non-linear contribution $u \cdot \nabla u$.
We do not use neither Lemma \ref{lemma_gronwall} nor Hardy-Littlewood-Sobolev inequality \eqref{ineq_Hardy_Littlewood_Sobolev}, but instead we take advantage of the double time integrals.

\subsection{Case $k=0$}
\label{sec_preuve_1}

\begin{lemma}\label{Lemme_T_theta_u_Lr}
For all $T>0$,  $r \in (1,+ \infty)$ and $\theta <1$,
$	\int_0^T (T-t) ^{-\theta} \|u(t,\cdot)\|_{L^r} dt
	<+ \infty$,
if
$	\theta < 
	\frac{3}{2r}.
$	

Furthermore, if $r \in [3,+ \infty)$, we can suppose that $\theta = \frac{3}{2r}$.

\end{lemma}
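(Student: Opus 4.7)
The proof follows the Duhamel approach of Lemma \ref{Lemme_u_L_r_Lr}, but replaces the Hardy--Littlewood--Sobolev step by a direct Fubini plus Beta function argument in order to absorb the weight $(T-t)^{-\theta}$. Starting from $u = \tilde P u_0 + \tilde G \P f - \tilde G \P\,\nabla\cdot(u^{\otimes 2})$, Minkowski's inequality, Young's convolution with $1 + 1/r = 1/p + 1/q$, and the $L^r$-boundedness of $\P$ give
\begin{equation*}
\|u(t,\cdot)\|_{L^r} \leq \|u_0\|_{L^r} + \int_0^t \|\P f(s,\cdot)\|_{L^r}\,ds + C\int_0^t [\nu(t-s)]^{-\alpha}\, \|u(s,\cdot)\|_{L^{2q}}^2\,ds,
\end{equation*}
with $\alpha = \tfrac{1}{2} + \tfrac{3(p-1)}{2p}$. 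After multiplication by $(T-t)^{-\theta}$ and integration on $[0,T]$, the two linear terms are immediately bounded by $T^{1-\theta}$ times their natural norms (since $\theta < 1$), so the crux is the bilinear contribution.

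For that term, I would exchange the two time integrations by Fubini and evaluate the inner integral explicitly as a Beta function:
\begin{equation*}
\int_s^T (T-t)^{-\theta}(t-s)^{-\alpha}\,dt = B(1-\theta,\, 1-\alpha)\,(T-s)^{1-\theta-\alpha},
\end{equation*}
finite as soon as $\alpha < 1$, i.e.\ $p < \tfrac{3}{2}$. Everything thus reduces to bounding $\int_0^T \|u(s,\cdot)\|_{L^{2q}}^2 (T-s)^{1-\theta-\alpha}\,ds$ by a suitable choice of $(p,q)$ plugged into the Leray energy estimate.

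For $r \geq 3$, the optimal pair is $q = 3$, $p = \tfrac{3r}{2r+3} \in [1,\tfrac{3}{2})$, so that $\alpha = 1 - \tfrac{3}{2r}$. The Leray bound gives $\|u\|_{L^2_T L^6}^2 \leq C\|\nabla u\|_{L^2_T L^2}^2 < +\infty$, and the factor $(T-s)^{3/(2r)-\theta}$ stays bounded iff $\theta \leq \tfrac{3}{2r}$, yielding the equality case. For $r \in (1,3)$ this choice forces $p < 1$ and is inadmissible; instead I would take $p = 1$, $q = r$, so $\alpha = \tfrac{1}{2}$, and interpolate via Gagliardo--Nirenberg
\begin{equation*}
\|u(s,\cdot)\|_{L^{2r}}^2 \leq \|u\|_{L^\infty_T L^2}^{(3-r)/r}\, \|u(s,\cdot)\|_{L^6}^{3(r-1)/r},
\end{equation*}
valid on $r \in [1,3]$. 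A H\"older inequality in time, with conjugate exponents $\tfrac{2r}{3(r-1)}$ (absorbing $\|u\|_{L^6}^{3(r-1)/r}$ into a power of $\|u\|_{L^2_T L^6}^2$) and $\tfrac{2r}{3-r}$ (acting on the weight $(T-s)^{1/2-\theta}$) produces the admissibility condition $(\tfrac{1}{2} - \theta)\cdot\tfrac{2r}{3-r} > -1$, equivalent to the strict bound $\theta < \tfrac{3}{2r}$.

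\textbf{Main obstacle.} The technical difficulty is the joint tuning of the Young exponent $p$ (which controls the time singularity $\alpha$), the Gagliardo--Nirenberg split of $\|u\|_{L^{2q}}^2$ between $L^2$ and $L^6$, and the H\"older conjugation that pairs the weight $(T-s)^{1-\theta-\alpha}$ with the interpolated $L^6$ factor. These three choices must be coordinated so that the final constraint on $\theta$ collapses exactly to $\tfrac{3}{2r}$, and so that the threshold is attained precisely at $q = 3$, i.e.\ $r \geq 3$: for $r < 3$, the H\"older step sits at its endpoint and only the strict inequality survives.
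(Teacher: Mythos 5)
Your proposal is correct and follows essentially the same route as the paper's proof: Duhamel formula, Young's convolution inequality with $1+\frac1r=\frac1p+\frac1q$, Fubini to absorb the weight $(T-t)^{-\theta}$ (your exact Beta-function evaluation is the paper's Proposition \ref{Prop_integ_time}), the Leray bound on $\|u\|_{L^2_TL^6}$ for $q=3$, and interpolation plus H\"older in time for $r<3$. The only difference is cosmetic: you fix the specific admissible pairs $(p,q)=(\tfrac{3r}{2r+3},3)$ and $(1,r)$ rather than carrying general parameters, which yields the same thresholds $\theta\le\tfrac{3}{2r}$ for $r\ge3$ and $\theta<\tfrac{3}{2r}$ for $r<3$.
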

\begin{remark}\label{rem_lemme1}
Then by Lemma \ref{Lemme_T_theta_u_Lr}, the condition is $\theta < \frac{1}{4}$, which is a weaker control than \eqref{ineq_D_NS_Leray}.

Still, this result allows to handle with the Lebesgue space $L^r$, with $r< + \infty$ arbitrary big.
\end{remark}
\begin{proof}[Proof of Lemma \ref{Lemme_T_theta_u_Lr}]
By the usual Duhamel formula around the heat equation, we get, for any $(t,x) \in [0,T] \times \R^3$
\begin{equation*}
	u(t,x) = \tilde P u_0 (t,x)+ \tilde G \P f(t,x) - \tilde G \P (u\cdot \nabla u)(t,x).
\end{equation*}
Next, by  Minkowski inequality, for any $r \geq 1$,
\begin{equation*}
	\|u(t,\cdot)\|_{L^r} \leq  \|u_0 \|_{L^r}+ \int_0^t \|\P  f(s,\cdot)\|_{L^r} ds +\| \nabla \tilde G \P( u\otimes u )(t,\cdot)\|_{L^r}.
\end{equation*}
The last term in the r.h.s comes from the incompressible property of $u$ \eqref{div(uDu)} (i.e. $u\cdot \nabla u = \nabla \cdot (u^{\otimes 2 })$), and by the convolution property of the heat semi-group.

From Young inequality for the convolution with $1 \leq p,q\leq + \infty$ such that
\begin{equation}\label{def_r_p_q_bis}
	1+ \frac 1r = \frac 1p + \frac 1q,
\end{equation}
and by Minkowski inequality,
\begin{eqnarray}\label{ineq_D_TildeG_Lr_1_bis}
	\| \nabla \tilde G \P( u\otimes u )(t,\cdot)\|_{L^r} 
	&\leq& C
	\int_0^t \Big ( \int_{\R^3} \Big ( \int_{\R^3} \nabla \tilde p(s,t,x,y) u^{\otimes 2}(s,y) dy\Big )^r dx \Big )^{\frac 1r} ds
	\nonumber \\
	&\leq&C
	\int_0^t \|\nabla \tilde p(s,t,\cdot,0)\|_{L^p} \|u^{\otimes 2}(s,\cdot)\|_{L^q} ds
	\nonumber \\
	&\leq&
	C	\int_0^t [\nu (t-s)]^{-\frac 12- \frac{3(p-1)}{2p}} \|u(s,\cdot)\|_{L^{2q}}^2 ds.
\end{eqnarray}
The Leray operator disappears in the first inequality thanks to a 
Caldern-Zygmund estimate.

Importantly, we can already remark that the term $(t-s)^{-\frac 12- \frac{3(p-1)}{2p}} $ is integrable if $p< \frac 32$.
\\

Recalling that, from Gagliardo-Nirenberg inequality, 
\begin{equation}\label{ineq_uL6}
	\int_0^T \|u(s,\cdot)\|_{L^6}^2 ds \leq  C	\int_0^T \|\nabla u(s,\cdot)\|_{L^2}^2 ds.
\end{equation}
Without looking at the time singularity in the r.h.s. of \eqref{ineq_D_TildeG_Lr_1}, we can choose  $q\leq 3$, 
and in order to get this control in $L^6$ of $u$, we can interpolate in the Lebesgue spaces, 
\begin{equation}\label{ineq_u_L2q}
	\|u(s,\cdot)\|_{L^{2q}}^2 \leq C 	\|u(s,\cdot)\|_{L^{2}}^{2\alpha}	\|u(s,\cdot)\|_{L^{6}}^{2(1-\alpha)} ,
\end{equation}
with $\alpha \in [0,1]$ such that
\begin{equation*}
	\frac 1{2q}= \frac \alpha 2 + \frac{1-\alpha}{6},
\end{equation*}
in other words,
\begin{equation*}
	\alpha=  
	\frac{3-q}{2q}.
\end{equation*}
Let us also remark that $1-\alpha
=3 \frac{q-1}{2q} $, and
$\frac{2\alpha}{1-\alpha}
= \frac{2(3-q)}{3(q-1)}$.

That is to say, as $s \mapsto	\|u(s,\cdot)\|_{L^{6}}$ lies in $L^2_t$ and that $s \mapsto	\|u(s,\cdot)\|_{L^{2}}$ is in $L^\infty_t$, we can see from \eqref{ineq_u_L2q}, that $\|u(s,\cdot)\|_{L^{2q}}^2$ is in $L^{\frac{1}{1-\alpha}}= L^{\frac{2q}{3(q-1)}}$.
Specifically, for any $q \in [1,3)$, from \eqref{ineq_u_L2q}, we derive
\begin{equation}\label{def_N_q}
	\int_0^t 	\|u(s,\cdot)\|_{L^{2q}}^{\frac{2}{1-\alpha}}  ds
	= 		
	\int_0^t  \|u(s,\cdot)\|_{L^{2q}}^{\frac{4q}{3q-3}} ds 
	\leq  C \|u\|_{L^\infty_t L^{2}}^{\frac{2(3-q)}{3(q-1)}} \int_0^t \|u(s,\cdot)\|_{L^{6}}^{2} ds.
\end{equation}
%

Let us go back to \eqref{ineq_D_TildeG_Lr_1}, for $q\leq 3$, $p<\frac 32$,  and for $\theta \in [0,1)$ small enough (specified further),
\begin{eqnarray*}
	&&	\int_0^T (T-t)^{-\theta}	\| \nabla \tilde G \P( u\otimes u )(t,\cdot)\|_{L^r}  dt
	\nonumber \\
	&\leq&
	C \int_0^T (T-t)^{-\theta}	\int_0^t [\nu (t-s)]^{-\frac 12- \frac{3(p-1)}{2p}} \|u(s,\cdot)\|_{L^{2q}}^2 ds \, dt
	\nonumber \\
	&= &	C \int_0^T \|u(  s ,\cdot)\|_{L^{2q}}^2 \int_s^{T } (T-t)^{-\theta}	 [\nu (t- s) ]^{-\frac 12- \frac{3(p-1)}{2p}}  d  s \,  dt .
\end{eqnarray*}
We have the useful integral result.
\begin{PROP}\label{Prop_integ_time}
	For all $0\leq s \leq T$ and $\theta, \beta <1$
	\begin{equation*}
		\int_{s}^T (T-t)^{-\theta} (t-s)^{-\beta} dt \leq C (\frac{T-s}{2})^{1-\theta-\beta}.
	\end{equation*}
\end{PROP}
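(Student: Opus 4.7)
The plan is to reduce this to a dimensionless Beta function integral via an affine change of variables. First I would set $u=(t-s)/(T-s)$, so that $u$ ranges over $[0,1]$, with $T-t=(T-s)(1-u)$, $t-s=(T-s)u$, and $dt=(T-s)\,du$. Substituting yields
\begin{equation*}
\int_s^T (T-t)^{-\theta}(t-s)^{-\beta}\,dt = (T-s)^{1-\theta-\beta}\int_0^1 (1-u)^{-\theta}u^{-\beta}\,du.
\end{equation*}
This factorisation is the key step: all dependence on $s$ and $T$ has been concentrated in the prefactor $(T-s)^{1-\theta-\beta}$.

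Next I would identify the remaining dimensionless integral as the classical Beta function value $B(1-\beta,1-\theta)$, which is finite under the two standing hypotheses $\theta<1$ and $\beta<1$ (these being exactly the integrability conditions at $u=1$ and $u=0$ respectively). No sign assumption on $\theta$ or $\beta$ is needed: if either exponent is non-positive, the corresponding weight is bounded on $[0,1]$ and integrability is automatic.

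Finally, to match the precise right-hand side appearing in the statement, I would rewrite $T-s=2\cdot(T-s)/2$ to obtain
\begin{equation*}
(T-s)^{1-\theta-\beta}=2^{1-\theta-\beta}\Big(\frac{T-s}{2}\Big)^{1-\theta-\beta},
\end{equation*}
and absorb $2^{1-\theta-\beta}B(1-\beta,1-\theta)$ into a constant $C=C(\theta,\beta)$.

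There is no substantive obstacle here: the proposition is essentially a scaling identity combined with the classical convergence criterion for the Beta function. The only mildly delicate point is remembering that the claim holds for \emph{all} $\theta,\beta<1$ (not just the positive range), which is why the pure-Beta-function route is preferable to, say, splitting the integral at the midpoint $(s+T)/2$ and bounding each factor separately — the latter approach would force an artificial positivity assumption, while the change of variables handles the negative-exponent cases uniformly.
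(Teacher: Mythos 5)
Your proof is correct. It differs from the paper's: the paper splits the integral at the midpoint $\frac{s+T}{2}$, bounds the factor that is largest near the opposite endpoint by its value at the midpoint, and integrates the remaining singular factor explicitly, arriving at the explicit constant $(1-\theta)^{-1}+(1-\beta)^{-1}$; you instead rescale by $u=(t-s)/(T-s)$ and recognise the Beta integral $B(1-\beta,1-\theta)$. Your route is cleaner and yields an exact identity rather than an inequality, and it does handle negative exponents uniformly. However, your parting criticism of the midpoint-splitting method is overstated: that method does not genuinely require $\theta,\beta\geq 0$. If, say, $\beta<0$, then on $[\frac{s+T}{2},T]$ one still has $(t-s)^{-\beta}\leq (T-s)^{-\beta}=2^{-\beta}\big(\frac{T-s}{2}\big)^{-\beta}$, so the splitting argument goes through at the cost of an extra factor $2^{|\beta|}$ (respectively $2^{|\theta|}$) absorbed into the generic constant $C$; only the paper's \emph{explicit} constant would need adjusting, and in the paper's applications both exponents are non-negative anyway. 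So the two proofs are equally valid here; yours buys the sharp constant and a uniform treatment of all $\theta,\beta<1$, while the paper's is more elementary and avoids invoking the Beta function.
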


\begin{proof}[Proof of Proposition \ref{Prop_integ_time}]
	\begin{eqnarray*}
		\int_{s}^T (T-t)^{-\theta} (t-s)^{-\beta} dt 
		&=&
		\int_{\frac{s+T}2}^T (T-t)^{-\theta} (t-s)^{-\beta} dt  
		+
		\int_{s}^{\frac{s+T}2} (T-t)^{-\theta} (t-s)^{-\beta} dt
		\nonumber \\
		& \leq &
		(\frac{T-s}{2})^{-\beta}\int_{\frac{s+T}2}^T (T-t)^{-\theta}  dt  
		+
		(\frac{T-s}{2})^{-\theta} \int_{s}^{\frac{s+T}2}  (t-s)^{-\beta} dt
		\nonumber \\
		& \leq &
		\big [(1-\theta)^{-1}+(1-\beta)^{-1} \big ](\frac{T-s}{2})^{1-\theta-\beta}.
	\end{eqnarray*}
	
\end{proof}
Hence, for all $\theta<1$ and $p< \frac 32$,  
\begin{equation*}
	\int_0^T (T-t)^{-\theta}	\| \nabla \tilde G \P( u\otimes u )(t,\cdot)\|_{L^r}  dt
	\leq
	C \nu^{-\frac 12- \frac{3(p-1)}{2p}}  \int_0^T \|u(  s ,\cdot)\|_{L^{2q}}^2 (T-s)^{\frac 12-\theta- \frac{3(p-1)}{2p}}   ds .
\end{equation*}

$\bullet $ If $q =3$, then $\alpha=0$ and
\begin{equation*}
	\int_0^T (T-t)^{-\theta}	\| \nabla \tilde G \P( u\otimes u )(t,\cdot)\|_{L^r}  dt
	\leq
	C \nu^{-\frac 12- \frac{3(p-1)}{2p}}  \int_0^T \|u(  s ,\cdot)\|_{L^{6}}^2 (T-s)^{-1-\theta+ \frac{3}{2p}}   ds .
\end{equation*}
To use \eqref{ineq_uL6}, we have to suppose that 
\begin{equation*}
	\theta \leq \frac{3}{2p}-1 = \frac{3}{2}(1+\frac 1r - \frac{1}{3})- 1 = \frac{3}{2r},
\end{equation*}
from \eqref{def_r_p_q_bis}, which also yields that
\begin{equation*}
	r
	= \frac{3p}{3-2p} \in [3, + \infty) ,
\end{equation*}
for $1\leq p < \frac 32$.
\\

$\bullet$ If $q<3$,

Next, by H\"older inequality, with the conjugate value
$\Big ( \frac{1}{1-\alpha}\Big )'=(\frac{2q}{3q-3})'
= \frac{2q}{3-q}$,
\begin{eqnarray}\label{ineq_theta_r_Holder}
	&&	\int_0^T (T-t)^{-\theta}	\| \nabla \tilde G \P( u\otimes u )(t,\cdot)\|_{L^r}  dt
	\nonumber \\
	&\leq &	C\nu ^{-\frac 12- \frac{3(p-1)}{2p}}
	\Big ( \int_{0}^{T } (T-s)^{(\frac 12-\theta- \frac{3(p-1)}{2p})\frac{2q}{3-q} }   ds	\Big )^{\frac{3-q}{2q}} \Big ( \int_{0}^{T } \|u( s ,\cdot)\|_{L^{2q}}^{\frac{4q}{3q-3}}  ds	\Big )^{\frac{3q-3}{2q}},
\end{eqnarray}
which is finite, see \eqref{def_N_q}, if
\begin{equation*}
	(\frac 12-\theta- \frac{3(p-1)}{2p})\frac{2q}{3-q} >-1,
\end{equation*}
namely
\begin{equation*}
	\theta<-\frac 32+ \frac{3}{2p}+\frac{3}{2q} = -\frac 32	+ \frac{3}{2}(1+\frac 1r)=
	\frac{3}{2r}.
\end{equation*}
In that case, we have $r \in (1,+\infty)$.

%

\end{proof}
\begin{remark}\label{rem_theta_1}

We have to pay attention, if $r< \frac 32$, we cannot suppose that $\theta>1$, implying a non integrable singularity in time, as we have to impose $\theta<1$ to use Proposition \ref{Prop_integ_time}.
\end{remark}

\subsubsection{Final control of $\int_0^T (T-t)^{-\theta}\|\nabla  u (t,\cdot)\|_{L^r}dt$ }
\label{sec_nabla_u_final}

\begin{lemma}\label{Lemme_T_theta_nabla_u_Lr}
	For all $T>0$,  $r \in (1,3)$ and $\theta <1$, 
	$	\int_0^T (T-t) ^{-\theta} \|\nabla u(t,\cdot)\|_{L^r} dt
	<+ \infty$,
	if
	$	\theta \leq  
		\frac{3-r}{2r}.
	$	
\end{lemma}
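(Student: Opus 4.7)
The plan is to mirror the proof of Lemma \ref{Lemme_T_theta_u_Lr}, but with an extra derivative placed carefully so that the resulting time singularity remains integrable via Proposition \ref{Prop_integ_time}. First I would apply $\nabla$ to the Duhamel formula:
\begin{equation*}
\nabla u(t,\cdot) = \nabla \tilde P u_0(t,\cdot) + \nabla \tilde G \P f(t,\cdot) - \nabla \tilde G \P(u\cdot\nabla u)(t,\cdot).
\end{equation*}
The heat-semi-group contribution is controlled by $\|\nabla u_0\|_{L^r}$ thanks to the assumption $u_0\in W^{1,r}(\R^3,\R^3)$, while the forcing term only produces a $(t-s)^{-1/2}$ time singularity, which is integrable against $(T-t)^{-\theta}$ for any $\theta<1$ by Proposition \ref{Prop_integ_time}. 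The serious contribution is therefore the nonlinear one.

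The crucial structural choice is to move only one derivative onto the heat kernel while keeping the other on the velocity field. Concretely, by Young's inequality, the Calder\'on--Zygmund bound on $\P$ (valid for $r\in(1,\infty)$), and H\"older,
\begin{equation*}
\|\nabla \tilde G\P(u\cdot\nabla u)(t,\cdot)\|_{L^r} \leq C\int_0^t \|\nabla \tilde p(s,t,\cdot,0)\|_{L^p}\,\|u(s,\cdot)\|_{L^6}\,\|\nabla u(s,\cdot)\|_{L^2}\,ds,
\end{equation*}
with the selection $q=3/2$ and $p=3r/(r+3)$ (admissible for $r\in[3/2,3)$). The point is that $\|\nabla \tilde p(s,t,\cdot,0)\|_{L^p}\leq C(t-s)^{-1/2-3(p-1)/(2p)} = C(t-s)^{-3(r-1)/(2r)}$, whose exponent stays strictly below $1$ precisely when $r<3$. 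Placing both derivatives on the kernel \`a la Lemma \ref{Lemme_T_theta_u_Lr} (\textit{i.e.} using $u\cdot\nabla u = \nabla \cdot u^{\otimes 2}$ together with an integration by parts) would instead produce an exponent at least $1$ and break Proposition \ref{Prop_integ_time}.

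Next I would multiply by $(T-t)^{-\theta}$, swap integrals by Fubini, and apply Proposition \ref{Prop_integ_time}:
\begin{equation*}
\int_s^T (T-t)^{-\theta}(t-s)^{-3(r-1)/(2r)}\,dt \leq C(T-s)^{1-\theta-3(r-1)/(2r)}.
\end{equation*}
The exponent $1-\theta-3(r-1)/(2r)$ is non-negative exactly when $\theta\leq(3-r)/(2r)$, which is the threshold in the statement. Under this assumption the residual weight is uniformly bounded on $[0,T]$, and the proof is concluded via the Sobolev embedding $\|u(s,\cdot)\|_{L^6}\leq C\|\nabla u(s,\cdot)\|_{L^2}$ together with Leray's energy identity \eqref{ineq_D_NS_Leray}, giving $\int_0^T \|u\|_{L^6}\|\nabla u\|_{L^2}\,ds \leq C\int_0^T \|\nabla u\|_{L^2}^2\,ds<+\infty$.

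The hard part will be the complementary range $r\in(1,3/2)$, where the admissible index $p=3r/(r+3)$ drops below $1$ and the clean H\"older split $\|u\cdot\nabla u\|_{L^q}\leq \|u\|_{L^6}\|\nabla u\|_{L^2}$ is no longer compatible with Young. I would handle this case by interpolating $\|u(s,\cdot)\|_{L^{2a}}$ between $L^2$ and $L^6$ in the spirit of \eqref{ineq_u_L2q}, for a suitable $a$ tuned to keep $p\in[1,3/2)$, and then applying an extra H\"older inequality in time so that the products are absorbed by $L^\infty_T L^2\cap L^2_T H^1$. Matching the time exponents after Proposition \ref{Prop_integ_time} should recover the same threshold $\theta\leq(3-r)/(2r)$ in the full range $r\in(1,3)$.
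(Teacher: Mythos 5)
Your argument takes a genuinely different route from the paper. The paper proves this lemma by passing to the vorticity $\omega=\nabla\times u$ through the frozen-flow proxy kernel $\hat p^{\tau,\xi}$, putting \emph{both} derivatives on the kernel against the increment $[u(s,\theta_{s,\tau}(\xi))-u(s,y)]^{\otimes 2}$, trading the resulting $(t-s)^{-1}$ singularity for a factor $(t-s)^{\gamma}$ via the Sobolev--Slobodeckij seminorm $[u(s,\cdot)]_{W^{\gamma,2r}}$, and then invoking Brezis--Mironescu with $\gamma=1-\alpha=\frac{3-r}{2r}$ before applying Proposition \ref{Prop_integ_time} and \eqref{ineq_D_NS_Leray}; it finally returns to $\nabla u$ by Calder\'on--Zygmund/Biot--Savart. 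You instead stay with the plain Duhamel formula for $u$, keep the nonlinearity in non-divergence form, and put only one derivative on the heat kernel, paying with $\|u\cdot\nabla u\|_{L^{3/2}}\le\|u\|_{L^6}\|\nabla u\|_{L^2}$. For $r\in[3/2,3)$ your computation is correct and complete: with $q=3/2$, $p=3r/(r+3)\ge 1$, the kernel exponent is $\frac{3(r-1)}{2r}<1$, Proposition \ref{Prop_integ_time} leaves the weight $(T-s)^{\frac{3-r}{2r}-\theta}$, which is bounded up to and including $\theta=\frac{3-r}{2r}$, and \eqref{ineq_D_NS_Leray} closes the estimate. This is appreciably more elementary than the paper's proof (no proxy, no vorticity, no fractional norms) and reaches the same threshold with the endpoint on that range; the price is that Young's inequality forces $q\le r$, so the construction collapses below $r=3/2$, exactly where the paper's Slobodeckij device keeps working uniformly.

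The one genuine gap is the endpoint in the range $r\in(1,3/2)$. Your proposed fix --- interpolate $\|u\|_{L^a}$ between $L^2$ and $L^6$ for some $a<6$ and then H\"older in time --- does work, but if you carry out the bookkeeping (with $1/a=1/q-1/2$, $\beta=3/q-2>0$ the $L^2$-weight in the interpolation, and the time-H\"older exponent $2/\beta$ on the weight $(T-s)^{1-\theta-\frac12-\frac32(\frac1q-\frac1r)}$) the convergence condition comes out as $\theta<\frac{3}{2r}-\frac12=\frac{3-r}{2r}$ with a \emph{strict} inequality for every admissible $q<3/2$; equality is only attainable when $\beta=0$, i.e.\ $q=3/2$, i.e.\ $r\ge 3/2$. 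So your scheme proves $\theta<\frac{3-r}{2r}$ on $(1,3/2)$ but not the stated $\theta\le\frac{3-r}{2r}$ there. To recover the endpoint on the full range $(1,3)$ you would need an ingredient that bounds the integrand by exactly $\|\nabla u(s,\cdot)\|_{L^2}^2$ times an integrable kernel with no residual time-H\"older, which is precisely what the paper's combination of the squared increment, the $W^{\gamma,2r}$ seminorm and Brezis--Mironescu achieves.
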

\begin{proof}[Proof of Lemma \ref{Lemme_T_theta_nabla_u_Lr}]
	%
	%
	Let us recall interpolation inequality of  Sobolev spaces by  Brezis Mirunescu \cite{brez:miru:18},
	for $6\geq 2r$ and $\alpha \in [0,1]$: 
	\begin{equation*}
		\|u(s,\cdot)\|_{W^{\gamma,2r}} \leq C \|u(s,\cdot)\|_{W^{0,6}}^\alpha \|u(s,\cdot)\|_{W^{1,2}}^{1-\alpha}
		=C \|u(s,\cdot)\|_{L^6}^\alpha \|\nabla u(s,\cdot)\|_{L^2}^{1-\alpha},
	\end{equation*}
	such that
	\begin{equation*}
		\begin{cases}
			\frac{1}{2r} = \frac{\alpha}{6} + \frac{1-\alpha}{2},
			\\
			\gamma= (1-\alpha).
		\end{cases}
	\end{equation*}
	That is to say, we obtain
	\begin{equation}\label{ident_alpha_lemme2}
		1= \frac{2r \alpha}{6}+ (1-\alpha) r \
		\Leftrightarrow 
		\alpha =\frac{3(r-1)}{2r} ,
	\end{equation}
	and
	\begin{equation}\label{ident_1alpha_lemme2}
		\gamma=	1-\alpha = 
		\frac{3-r}{2r}.
	\end{equation}
	%
	%
	Hence, we get with the same integral permutation performed in Section \ref{sec_preuve_1},
	\begin{eqnarray*}
		&&	\int_{0}^T (T-t)^{-\theta} \Big \| x \mapsto  \int_0^t [\nu (t-s)]^{-1} \int_{\R^3}  \bar p^{t,x }(s,t,x,y)
		|u(s,\theta_{s,t}(x))-u(s,y)|^{2} dy \, ds \Big \|_{L^r} dt
		\nonumber \\
		&\leq &
		C	\int_{0}^T (T-t)^{-\theta}  \int_0^t  [\nu (t-s)]^{-1+\gamma} \|u(s,\cdot)\|_{W^{\gamma,2r}}^2 ds \, dt
		\nonumber \\
		&\leq & 
		C \nu  ^{-1+\gamma} 	\int_{0}^T 	\|u(s,\cdot)\|_{L^6}^{2\alpha } \|\nabla u(t-\tilde s,\cdot)\|_{L^2}^{2(1-\alpha)} \int_{ s}^T (t-s )^{-1+\gamma}   (T-t)^{-\theta}   
		dt \, d \tilde s ,
	\end{eqnarray*}
	by interpolation inequality \eqref{ineq_Brez_Mirunescu}
	and by Gagliardo-Nirenberg inequality $\|u(s,\cdot)\|_{L^6} \leq C \|\nabla u( s,\cdot)\|_{L^2}$, so
	\begin{eqnarray*}
		&&	\int_{0}^T (T-t)^{-\theta} \Big \| x \mapsto  \int_0^t [\nu (t-s)]^{-1} \int_{\R^3}  \bar p^{t,x }(s,t,x,y)
		|u(s,\theta_{s,t}(x))-u(s,y)|^{2} dy \, ds \Big \|_{L^r} dt
		\nonumber \\
		&\leq & 
		C \nu ^{-1+\gamma}
		\int_{0}^T 	\|\nabla u(s,\cdot)\|_{L^2}^{2} \int_{ s}^T (t- s )^{-1+\gamma}   (T-t)^{-\theta}   
		dt \, d  s 
		\nonumber \\
		&\leq & 
		C \nu ^{-1+\gamma}	\int_{0}^T 	\|\nabla u(s,\cdot)\|_{L^2}^{2}(T- s )^{-\theta+\gamma}  d  s,
	\end{eqnarray*}
	by Proposition \ref{Prop_integ_time} as soon as $\gamma>0$ and $\theta<1$.
	\\
	
	To use inequality \eqref{ineq_D_NS_Leray}, we have to impose that
	\begin{equation*}
		\theta \leq 	\gamma =  \frac{6-2r}{r(6-2)} =  \frac{3-r}{2r},
	\end{equation*}
	and
	\begin{equation*}
		r = \frac 12 \big ( \frac{\alpha}{6}+ \frac{1-\alpha}{2} \big )^{-1} 
		=\frac 12 \big ( \frac{\alpha+3(1-\alpha)}{6} \big )^{-1} 
		=\frac{3}{3-2\alpha} \in [1,3),
	\end{equation*}
	for any $\alpha \in [0,1)$.
	\\

	Ultimately, to derive a inequality with $\nabla u$ from our analysis with $\omega$, it suffices to use a Calder\`on-Zygmund estimate from the Biot and Savart representation $u = \nabla \times (-\Delta)^{-1}\omega$, available in the incompressible case.
	
	Actually, we do not take into account $r=1$ ($\alpha=0$), because in such a case it is not direct to upper bound $\|\omega(t,\cdot)\|_{L^r}$ by $\|\nabla u(t,\cdot)\|_{L^r}$; unlike for $r \in (1,+\infty)$.
	Furthermore, we even have $\omega \in L^\infty([0,T], L^1(\R^3,\R^3))$.
			\begin{remark}
				We deduce from Lemma \ref{Lemme_T_theta_nabla_u_Lr}, and by Gagliardo-Nirenberg inequality,
				\begin{equation*}
					\int_0^T (T-t) ^{-\theta} \| u(t,\cdot)\|_{L^{p}} dt
					<+ \infty,
				\end{equation*}
				with
				\begin{equation*}
					\frac{1}{p} = \frac{1}{r}-\frac{1}{3}= \frac{3-r}{3r} \Leftrightarrow p= \frac{3r}{3-r},
				\end{equation*}
				and
				\begin{equation*}
					\theta < 
					\frac{3-r}{2r}= \frac{3}{2r} - \frac{1}{2},
				\end{equation*}
				which is the same constraint as in Lemma \ref{Lemme_T_theta_u_Lr}.
				\\

			\end{remark}
			As we already said in Remark \ref{rem_lemme1}, we have to exploit more the regularisation by the heat kernel in order to  get controls involving more regularity 
			than in \eqref{ineq_NS_Leray} and \eqref{ineq_D_NS_Leray}.
			This is what is done in the next section, where we differentiate once more Duhamel formula \eqref{Duhamel_omega_xi};
			yet we have to be careful on the regularity of the Sobolev norm considered in a way that the time singularity remains integrable.

			
		\end{proof}

\subsection{Control of $\int_0^T (T-t)^{-\theta}\|\nabla \omega(t,\cdot)\|_{L^r}dt$ }

\begin{lemma}\label{Lemme_T_theta_nabla2_u_Lr_bis}
	For all $T>0$,  $r \in [1,\frac 32 )$ and $\theta \in \R$,
$	\int_0^T (T-t) ^{-\theta} \|\nabla^2 u(t,\cdot)\|_{L^r} dt
<+ \infty$
	if
	$\theta <  \min(1, 
	\frac{3-2r}{2r}).
	$
\end{lemma}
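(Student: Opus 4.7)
The plan is to adapt the analysis of Lemma \ref{Lemme_T_theta_nabla_u_Lr} to one extra spatial derivative, following the scheme used for Lemma \ref{Lemme_T_theta_nabla2_u_Lr} (the case $k=2$, $r<3/2$), but integrating directly against the weight $(T-t)^{-\theta}$ rather than invoking the Bihari--LaSalle or Hardy--Littlewood--Sobolev steps. First I would differentiate the Duhamel representation \eqref{Duhamel_omega_xi} of the vorticity once more in $x$ to obtain \eqref{Duhamel_nabla_omega}, take the $L^r$ norm in $x$, and reproduce the Minkowski / Jensen / change of variable $x' = \theta_{s,t}(x)$ chain (legitimate by incompressibility, see \eqref{change_variable}). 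Introducing the Sobolev--Slobodeckij seminorm as in \eqref{ineq_terme_mechant_1}--\eqref{ineq_terme_mechant_2}, the non-linear contribution to $\|\nabla \omega(t,\cdot)\|_{L^r}$ is bounded by
\begin{equation*}
C\int_{0}^{t}[\nu(t-s)]^{-\tfrac{3}{2}+\gamma}\,\|u(s,\cdot)\|_{W^{\gamma,2r}}^{2}\,ds,
\end{equation*}
where $\gamma \in (1/2,1]$ is a free interpolation parameter; the lower bound $\gamma>1/2$ is dictated by the extra derivative on $\hat p^{\tau,\xi}$, which has produced a $(t-s)^{-3/2}$ singularity.

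Next, I would apply the Brezis--Mirunescu interpolation \eqref{ineq_Brez_Mirunescu} between $L^{6}$ and $W^{1,2}$ (the appropriate endpoint since $2r<3$), obtaining
\begin{equation*}
\|u(s,\cdot)\|_{W^{\gamma,2r}} \leq C\,\|u(s,\cdot)\|_{L^{6}}^{\alpha}\,\|\nabla u(s,\cdot)\|_{L^{2}}^{1-\alpha},\qquad \alpha=\tfrac{3(r-1)}{2r},\quad \gamma=1-\alpha=\tfrac{3-r}{2r}.
\end{equation*}
The constraint $\gamma>1/2$ then becomes exactly $r<3/2$, matching the hypothesis. Using Gagliardo--Nirenberg to absorb $\|u(s,\cdot)\|_{L^{6}}$ into $\|\nabla u(s,\cdot)\|_{L^{2}}$ replaces $\|u(s,\cdot)\|_{W^{\gamma,2r}}^{2}$ by $C\|\nabla u(s,\cdot)\|_{L^{2}}^{2}$, and a Calder\'on--Zygmund estimate on the Biot--Savart formula $u=\nabla\times(-\Delta)^{-1}\omega$ upgrades $\|\nabla \omega\|_{L^{r}}$ to $\|\nabla^{2} u\|_{L^{r}}$ for $r\in(1,3/2)$.

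Multiplying by $(T-t)^{-\theta}$, integrating in $t\in[0,T]$ and swapping the $t$ and $s$ integrals, the non-linear part becomes
\begin{equation*}
C\nu^{-\tfrac{3}{2}+\gamma}\int_{0}^{T}\|\nabla u(s,\cdot)\|_{L^{2}}^{2}\left(\int_{s}^{T}(T-t)^{-\theta}(t-s)^{-\tfrac{3}{2}+\gamma}\,dt\right)ds.
\end{equation*}
Proposition \ref{Prop_integ_time}, applicable because $\tfrac{3}{2}-\gamma<1$ and $\theta<1$, bounds the inner integral by $(T-s)^{\gamma-1/2-\theta}$. Combined with the Leray energy estimate \eqref{ineq_D_NS_Leray}, the outer integral is finite provided $\gamma-1/2-\theta\geq 0$, i.e.\ $\theta\leq \gamma-1/2=\tfrac{3-2r}{2r}$; the strict inequality of the statement is recovered by taking $\gamma$ slightly below $\tfrac{3-r}{2r}$ (equivalently, picking an interpolation exponent $p$ slightly below $6$ in Brezis--Mirunescu, which is harmless as long as $r<3/2$). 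The initial data and force contributions are treated by a direct thermic estimate as in the proof of Lemma \ref{Lemme_T_theta_nabla_u_Lr}, using $\theta<1$ alone.

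The main obstacle is the rigidity of the constraint $\gamma>1/2$: this is precisely what forces $r<3/2$, and there is no flexibility in its choice without changing the interpolation endpoint. Trying to enlarge the admissible range of $r$ by switching to an $L^{\infty}$ endpoint would introduce an extra $\|u(s,\cdot)\|_{L^{\infty}}$ factor that cannot be absorbed by the Leray energy alone; this is structurally consistent with the fact that in the statement the admissible range of $\theta$ collapses to zero as $r\to 3/2^{-}$.
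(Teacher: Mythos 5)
Your proposal is correct and reuses the paper's architecture up to the very last step: the same proxy Duhamel formula for $\nabla\omega$, the same Jensen/change-of-variable/Slobodeckij-seminorm absorption of the $(t-s)^{-3/2}$ singularity, and the same Brezis--Mirunescu interpolation whose constraint $\gamma>\frac12$ is what forces $r<\frac32$. Where you genuinely diverge is in how the weight $(T-t)^{-\theta}$ is discharged. You take the endpoint $p=6$, absorb everything into $\|\nabla u(s,\cdot)\|_{L^2}^{2}$ by Gagliardo--Nirenberg, and conclude with Proposition \ref{Prop_integ_time} and the Leray bound \eqref{ineq_D_NS_Leray} --- exactly the mechanism of Lemma \ref{Lemme_T_theta_nabla_u_Lr} --- which yields $\theta\le\gamma-\frac12=\frac{3-2r}{2r}$ in one stroke. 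The paper instead keeps a free auxiliary exponent $p$, retains a factor $\|u\|_{L^\infty_TL^2}^{2\alpha\tilde\alpha}$, integrates the kernel $[\nu\tilde s]^{-\frac32+\gamma}$ separately (producing $T^{\gamma-\frac12}$), and applies H\"older in $t$ with exponents $1/(\alpha\tilde\alpha)$ and $1/(1-\alpha\tilde\alpha)$; its condition is $\theta<\alpha\tilde\alpha$, and $\alpha\tilde\alpha$ only approaches $\frac{3-2r}{2r}$ as $p\downarrow\frac{2r}{2-r}$, so an optimization over $p$ is needed to exhaust the stated range of $\theta$. Your route is simpler, avoids that optimization, and even reaches the endpoint $\theta=\frac{3-2r}{2r}$, which is more than the statement asks. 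Two caveats, both shared with the paper's own proof: the Calder\'on--Zygmund/Biot--Savart upgrade from $\|\nabla\omega\|_{L^r}$ to $\|\nabla^2u\|_{L^r}$ requires $r>1$, so the case $r=1$ of the statement is not actually covered; and at $r=1$ the exponent $\gamma$ degenerates to $1$, outside the admissible range of the Slobodeckij seminorm \eqref{def_norm_Sobolev_Slobodeckij}.
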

\begin{proof}
	Let us start again the analysis of $\omega$ from the proof of Lemma \ref{Lemme_T_theta_nabla2_u_Lr}.
	We recall the representation of $\nabla	\omega$,
	\begin{eqnarray}\label{Duhamel_nabla_omega_bis}
		\nabla	\omega (t,x) &=& \hat P^{t,x} \nabla \nabla \times  u_0(t,x)+ \hat G ^{t,x} \nabla \nabla \times  f(t,x) 
		\\
		&&+
		\int_0^t \int_{\R^3} \nabla_x \nabla_x \times \Big ( \nabla_y  \hat p^{\tau,\xi }(s,t,x,y)
		[u(s,\theta_{s,\tau}(\xi))-u(s,y)]^{\otimes 2} \Big ) \Big |_{(\tau,\xi)=(t,x)}dy \, ds.
		\nonumber
	\end{eqnarray}
	Still by interpolation  of Brezis Mirunescu and in the  Lebesgue spaces,  
	for  $p\geq 2r$
	\begin{equation}\label{ineq_interpol_nabla_omega1_bis}
		\|u(s,\cdot)\|_{W^{\gamma,2r}} \leq
		C 	\|u(s,\cdot)\|_{L^2}^{\alpha \tilde \alpha}	\|u(s,\cdot)\|_{L^6}^{\alpha (1-\tilde \alpha)} \|\nabla u(s,\cdot)\|_{L^2}^{1-\alpha}
		\leq C 	\|u(s,\cdot)\|_{L^2}^{\alpha \tilde \alpha}	 \|\nabla u(s,\cdot)\|_{L^2}^{1-\alpha\tilde \alpha},
	\end{equation}
	by Gagliardo-Nirenberg interpolation inequality, and
	where we recall identities \eqref{ident_alpha_lemme2} and \eqref{ident_1alpha_lemme2} 
	\begin{equation*}
		\begin{cases}
			\alpha =\frac{p(r-1)}{r(p-2)},
			\\
			\gamma=	1-\alpha =\frac{p-2r}{r(p-2)},
			\\
			\tilde \alpha= \frac{6-p}{2p}.
		\end{cases}
	\end{equation*}
With the same argument performed in \eqref{ineq_terme_mechant_2}, and from \eqref{Duhamel_nabla_omega_bis},
but with  the same integral permutation in time performed in Proposition \ref{Prop_integ_time}, 
	we derive 
	\begin{eqnarray*}
		&&		\int_{0}^T (T-t)^{-\theta} \Big \| x \mapsto \int_0^t \int_{\R^3} \nabla_x \nabla_x \times \Big ( \nabla_y  \hat p^{\tau,\xi }(s,t,x,y)
		[u(s,\theta_{s,\tau}(\xi))-u(s,y)]^{\otimes 2} \Big ) \Big |_{(\tau,\xi)=(t,x)}dy \, ds \Big \|_{L^r} dt
		\nonumber \\
		&\leq &C	\int_{0}^T (T-t)^{-\theta} \Big \| x \mapsto  \int_0^t [\nu (t-s)]^{-\frac 32} \int_{\R^3}  \bar p^{t,x }(s,t,x,y)
		|u(s,\theta_{s,t}(x))-u(s,y)|^{2} dy \, ds \Big \|_{L^r} dt
		\nonumber \\
		&\leq & 
		C	\int_{0}^T [\nu \tilde s ]^{-\frac 32+\gamma}  \int_{\tilde s}^T (T-t)^{-\theta}    \|u(t-\tilde s,\cdot)\|_{W^{\gamma,2r}}^2 dt \, d \tilde s 	.
	\end{eqnarray*}
	Next, by interpolation inequality \eqref{ineq_interpol_nabla_omega1_bis}, 
	\begin{eqnarray*}
		&&		\int_{0}^T (T-t)^{-\theta} \Big \| x \mapsto \int_0^t \int_{\R^3} \nabla_x \nabla_x \times \Big ( \nabla_y  \hat p^{\tau,\xi }(s,t,x,y)
		[u(s,\theta_{s,\tau}(\xi))-u(s,y)]^{\otimes 2} \Big ) \Big |_{(\tau,\xi)=(t,x)}dy \, ds \Big \|_{L^r} dt
		\nonumber \\
		&\leq & 
		C\|u\|_{L^\infty_T L^2}^{2\alpha \tilde \alpha}	\int_{0}^T [\nu \tilde s ]^{-\frac 32+\gamma}  \int_{\tilde s}^T (T-t)^{-\theta}   
		\|\nabla u(t-\tilde s,\cdot)\|_{L^2}^{2(1-\alpha\tilde \alpha)}
		dt \, d \tilde s .
	\end{eqnarray*}
	By H\"older inequality, we have
	\begin{eqnarray*}
		&&		\int_{0}^T (T-t)^{-\theta} \Big \| x \mapsto \int_0^t \int_{\R^3} \nabla_x \nabla_x \times \Big ( \nabla_y  \hat p^{\tau,\xi }(s,t,x,y)
		[u(s,\theta_{s,\tau}(\xi))-u(s,y)]^{\otimes 2} \Big ) \Big |_{(\tau,\xi)=(t,x)}dy \, ds \Big \|_{L^r} dt
		\nonumber \\
		&\leq & 
		C (\gamma- \frac 12)^{-1} \|u\|_{L^\infty_T L^2}^{2\alpha \tilde \alpha} \nu^{-\frac 32+\gamma} 	T^{\gamma-\frac 12}  \Big (\int_{0}^T (T-t)^{-\frac \theta{\alpha \tilde \alpha}}    dt \Big )^{\alpha \tilde \alpha}
		\Big ( \int_0^T \|u(t-\tilde s,\cdot)\|_{L^6}^{2} dt \Big )^{1-\alpha\tilde \alpha},
	\end{eqnarray*}
	which is finite if
	$\gamma=	1-\alpha= \frac{p-2r}{r(p-2)} > \frac 12$, 
	implying that, for any $p \in [2,6)$,
	\begin{equation*}
		2(p-2r)>r(p-2) \ \Leftrightarrow r < \frac{2p}{p+2} \in [1, \frac{3}{2}) 
		\Leftrightarrow p > \frac{2r}{2-r},
	\end{equation*}
	and if the following condition is satsified,
	\begin{equation*}
		1> \frac{\theta}{\alpha \tilde \alpha} 
		\ \Leftrightarrow
		\theta < \alpha \tilde \alpha =\frac{p(r-1)}{r(p-2)} \frac{6-p}{2p}
		<\frac{(r-1)(6-\frac{2r}{2-r})}{2r(\frac{2r}{2-r}-2)}
		= \frac{(3-2r)}{2r}.
	\end{equation*}
	as
	$p >\frac {2r}{2-r}$.
	
	\textit{In fine}, once again, to get an inequality with $\nabla^2 u$ instead of $\nabla \omega$, we can use a
	Calder\`on-Zygmund inequality for the Biot and Savart representation.
\end{proof}

\bibliographystyle{alpha}
\bibliography{bibli}

\end{document}